\numberwithin{equation}{section}
\newcommand{\PP}{\mathbb{P}}
\newcommand{\K}{\mathbb{K}}
\newcommand{\bT}{\mathbb{T}}
\newcommand{\cE}{{\mathcal E}}
\newcommand{\cG}{{\mathcal G}}
\newcommand{\cO}{{\mathcal O}}
\newcommand{\cM}{{\mathcal M}}
\newcommand{\cP}{{\mathcal P}}
\newcommand{\cT}{{\mathcal T}}
\newcommand{\cV}{{\mathcal V}}
\newcommand{\tB}{\tilde{B}}
\newcommand{\tU}{\tilde{U}}
\newcommand{\tV}{\tilde{V}}
\newcommand{\tphi}{\widetilde{\phi}}
\newcommand{\hG}{\hat{G}}
\newcommand{\Coker}{\mathrm{Coker}}
\newcommand{\End}{\mathrm{End}\,}
\newcommand{\Ker}{\mathrm{Ker}}
\newcommand{\tr}{\mathrm{tr}}
\newcommand{\Image}{\mathrm{Im}\,}
\newcommand{\Iden}{\mathrm{Id}}
\newcommand{\Hom}{\mathrm{Hom}}
\newcommand{\rank}{\mathrm{rk}\,}
\newcommand{\Pic}{\mathrm{Pic}}
\newcommand{\ev}{\mathrm{ev}}
\newcommand{\isom}{\xrightarrow{\sim}}
\newcommand{\Kc}{K_{C}}
\newcommand{\Oc}{{{\cO}_{C}}}
\newcommand{\Gr}{\mathrm{Gr}}
\newcommand{\Spec}{\mathrm{Spec}\,}
\newcommand{\Sec}{\mathrm{Sec}}
\newcommand{\Quot}{\mathrm{Quot}}
\newcommand{\Sing}{\mathrm{Sing}}
\newcommand{\mult}{\mathrm{mult}}
\newcommand{\bv}{B^k_{n,e} (V)}
\newcommand{\Bkg}{\left( B^k_{r,d} \right)_{\mathrm{PTI}}}
\newcommand{\gkv}{G^k_{n, e} (V)}
\newcommand{\tUrd}{\widetilde{U(r,d)}}
\newcommand{\tUne}{\widetilde{U(n,e)}}
\newcommand{\PTI}{\mathrm{PTI}}
\newtheorem{theorem}{{\textbf Theorem}}[section]
\newtheorem{proposition}[theorem]{{\textbf Proposition}}
\newtheorem{corollary}[theorem]{{\textbf Corollary}}
\newtheorem{lemma}[theorem]{{\textbf Lemma}}
\newtheorem{defn}[theorem]{{\textbf Definition}}
\newtheorem{remit}[theorem]{{\textbf Remark}}
\newenvironment{remark}{\begin{remit}\rm}{\end{remit}}
\newenvironment{definition}{\begin{defn}\rm}{\end{defn}}
\title{Nonemptiness and smoothness of twisted Brill--Noether loci}
\author{George H. Hitching, Michael Hoff and Peter E. Newstead}
\address{G. H. Hitching, Oslo Metropolitan University, Postboks 4, St. Olavs plass, 0130 Oslo, Norway.}
\email{gehahi@oslomet.no}
\address{M. Hoff, Universit\"at des Saarlandes, Campus E2 4, D-66123 Saarbr\"ucken, Germany.}
\email{hahn@math.uni-sb.de}
\address{P. E. Newstead, Department of Mathematical Sciences, University of Liverpool, Peach Street, Liverpool, L69 7ZL, UK.}
\email{newstead@liv.ac.uk}
\date{\today}
\keywords{Brill-Noether loci, Petri trace map}
\subjclass[2010]{14H60 (14H51)}
\begin{document}

\begin{abstract} Let $V$ be a vector bundle over a smooth curve $C$. In this paper, we study twisted Brill--Noether loci parametrising stable bundles $E$ of rank $n$ and degree $e$ with the property that $h^0 (C, V \otimes E) \ge k$. We prove that, under conditions similar to those of Teixidor i Bigas and of Mercat, the Brill-Noether loci are nonempty, and in many cases have a component which is generically smooth and of the expected dimension. Along the way, we prove the irreducibility of certain components of both twisted and ``nontwisted'' Brill--Noether loci. We describe the tangent cones to the twisted Brill-Noether loci. We end with an example of a general bundle over a general curve having positive-dimensional twisted Brill--Noether loci with negative expected dimension.
\end{abstract}

\maketitle
\section{Introduction}\label{intro}
Let $C$ be a smooth projective curve over an algebraically closed field $\K$ of characteristic zero. A fundamental feature of the geometry of $C$ and $\Pic^d (C)$ is the Brill--Noether locus
\begin{equation} W_d^r (C) \ = \ \{ L \in \Pic^d(C) : h^0 (C, L) \ge r+1 \} . \label{ClassicalBN} \end{equation}
These objects have been much studied. The expected dimension of $W_d^r (C)$ is the Brill--Noether number $\rho(g,d,r)=g-(r+1)(g-d+r)$ where $g$ is the genus of $C$; every irreducible component has dimension $\ge\rho(g,d,r)$ and a great deal is known about these loci (for details, see Section \ref{back}).

A natural generalisation of (\ref{ClassicalBN}) to vector bundles of higher rank is given as follows. We denote by $U(n, e)$ the moduli space of stable bundles of rank $n$ and degree $e$ over $C$. This is an irreducible quasiprojective variety of dimension $n^2(g-1) + 1$. The \textsl{generalised Brill--Noether locus} $B^k_{n,e}$ is defined set-theoretically by
\[ B^k_{n,e} \ = \ \{ E \in U(n,e) : h^0 (C, E) \ge k \} . \]
(In this notation, $W_d^r (C)$ is written $B_{1, d}^{r+1}$.) These loci have also been studied in much detail, although the results for the case $n=1$ do not necessarily generalise. Brill--Noether loci are also closely related to moduli of \textsl{coherent systems}, that is, pairs $(V, \Lambda)$ where $V$ is a vector bundle and $\Lambda$ a subspace of $H^0 (C, V)$ of a fixed dimension.

In the present work we study another generalisation of $B^k_{n,e}$, which to our knowledge was first defined in \cite[\S 2]{TiBT}. Fix a vector bundle $V$ over $C$ of rank $r$ and degree $d$ (not necessarily semistable). Then the \textsl{twisted Brill--Noether locus} $\bv$ is defined set-theoretically by
\[ \bv \ := \ \{ E \in U(n, e) : h^0 ( C, V \otimes E ) \ge k \} . \]

As outlined in \cite[\S 1]{TiB14}, the construction of $B^k_{n,e}$ in \cite[\S 2]{GTiB} is easily generalised to $\bv$, substituting a vector bundle $V$ for $\Oc$ in the appropriate places. (In \S 2 we will give a slightly more general version of this construction.) In particular, $\bv$ is a determinantal locus. Thus if $h^0 (C, V \otimes E) = k$, then the expected dimension of $\bv$ at $E$ is given by the \textsl{twisted Brill--Noether number}
\begin{align*} \rho^k_{n,e}(V) \ :&= \ \dim U(n, e) - k \left( k - \chi (C, V \otimes E) \right) \\
 &= \ n^2 ( g-1) + 1 - k \left( k - re-nd + rn(g-1) \right) . \end{align*}
Provided that this number is less than $\dim U(n,e)$, every irreducible component of $\bv$ has dimension at least equal to $\rho^k_{n,e}(V)$. If $\rho^k_{n,e}(V)\ge\dim U(n,e)$ then $\bv=U(n,e)$.

In the case $k=1$ with $V$ of integral slope $h$, we have $\rho^1_{1,g-1-h}=g-1$ and $B^1_{1,g-1-h}(V)$ is expected to be a divisor $\Theta_V$ in the Picard variety $\Pic^{g-1-h}(C)$. When $\Theta_V$ is a divisor, it is called a \textit{generalised theta divisor}. These have been much studied; see \cite{B06} for an overview. See also \cite{Br} for results on the singular loci of $\Theta_V$. It can also happen for special $V$ that $B^1_{1,h}(V)$ fails to be a divisor; see \cite{Ray}, \cite{Po} and \cite{Pa} for some examples. If $V$ does not have integral slope, then the theta divisor of $V$, if it exists, belongs to $U(n,e)$ for some $n \ge 2$. See \cite{Po13} for a survey of results on this type of generalised theta divisor. 
%\underline{$k = 1$, $n \ge 2$, codimension $\ge 2$:}

Note also the connection with varieties of subbundles of a vector bundle $V$. If we denote by $M_{n,e}(V)$ the variety of stable subbundles of $V$ of rank $n$ and degree $e$, there is a natural morphism $M_{n,e}(V)\to B^1_{n,e}(V)$. In particular, when $n=1$ and $e$ is maximal, this is a question of maximal line subbundles. In the case $r=2$, these have been studied for a long time, dating back to \cite{LN}; for more recent work and all $r$, see \cite{Oxb}. For $n > 1$, see \cite{Hol}, \cite{LanN}, \cite[Theorem 0.3]{RTiB} and \cite{BPL}.

When $n=1$, it turns out that the basic results of classical Brill-Noether theory generalise, at least when $V$ is a general stable bundle; for details, see Theorem \ref{LTiB}. This study was initiated in \cite{Hir2}. Our purpose in this article is to study the case $n>1$.

In \S\ref{back}, we give more details on some of the background material mentioned in the introduction. In \S \ref{preliminaries}, we construct the twisted Brill--Noether locus $B^k ( \cV, \cE )$ associated to a pair of families of bundles over $C$, with $\bv$ as a special case. After listing some elementary properties, we develop some more tools. We construct parameter spaces for certain ``twisted coherent systems'', generalising the loci $G^r_d(C)$ in \cite{ACGH} and the moduli spaces of $\alpha$-stable coherent systems, although we do not discuss stability or moduli.
%; and we also briefly review $R_\delta$-like stratifications as defined in \cite{Hir1}. 
Furthermore, in \S \ref{semistable}, we discuss the twisted Brill--Noether loci $\tB^k_{n, e}(V)$ where strictly semistable bundles are admitted.

In \S \ref{irred} we give two applications of the machinery set up in \S \ref{preliminaries}. In Theorem \ref{IrrPGfamilies}, we generalise Theorem \ref{LTiB}(5) to families of vector bundles which are general in the sense of \cite{TiB14}. We also find that, for a certain range of values of $k$, the Brill--Noether locus $B^k_{r,d}$ possesses a uniquely determined irreducible component $\Bkg$ (Theorem \ref{PTIIrrComp}); this is interesting because very little is known in general about irreducibility of $B^k_{r,d}$ for $k\ge2$ and $r>1$. %We also observe that the condition $h^0 (C, W) \ge k$ does not place very strong restrictions on the Segre invariants of $W$ (Corollary \ref{SegreGeneral}).

In \S \ref{chapternonemptiness}, we turn to twisted Brill--Noether loci $\bv$ for $n > 1$ and $k \ge 2$, which to our knowledge remain relatively little studied. We will answer some of the basic questions on nonemptiness and smoothness in this case. Our first result is:

\begin{theorem} \label{nonemptiness} Let $C$ be a smooth curve of genus $g$ and $V$ any vector bundle of rank $r$ and degree $d$ over $C$. Let $e_0$ and $k_0$ be integers satisfying $\rho^{k_0}_{1, e_0}(V) \ge 1$. Then for all $n \ge 2$, for all $e \ge ne_0 + 1$ (resp., $e \ge ne_0$) and for $1 \le k \le nk_0$, the twisted Brill--Noether locus $B^{nk_0}_{n,e}(V)$ (resp., $\tB^{nk_0}_{n,e} (V)$) is nonempty. \end{theorem}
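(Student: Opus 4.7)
The hypothesis $\rho^{k_0}_{1,e_0}(V)\geq 1$ gives $\dim B^{k_0}_{1,e_0}(V)\geq 1$, so one can choose pairwise non-isomorphic line bundles $L_1,\dots,L_n \in B^{k_0}_{1,e_0}(V)$. Set
\[
F_0 \ := \ L_1\oplus\cdots\oplus L_n.
\]
This is polystable of rank $n$ and degree $ne_0$, and $h^0(C,V\otimes F_0) = \sum_i h^0(C,V\otimes L_i) \geq nk_0$. The semistable assertion in the boundary case $e=ne_0$ follows at once, since $F_0 \in \tB^{nk_0}_{n,ne_0}(V)$.

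For the stable assertion (and for the semistable case when $e > ne_0$) set $m := e - ne_0 \geq 1$ and consider an elementary modification
\[
0 \longrightarrow F_0 \longrightarrow E \longrightarrow T \longrightarrow 0,
\]
where $T$ is a torsion sheaf of length $m$ supported at distinct points $p_1,\dots,p_m \in C$, classified by an extension class $\epsilon \in \Ext^1(T,F_0) \cong \bigoplus_{i,j} L_j|_{p_i}$. Local freeness of $V$ makes tensoring with $V$ exact, so $V\otimes F_0 \hookrightarrow V\otimes E$ and $h^0(C,V\otimes E) \geq h^0(C,V\otimes F_0) \geq nk_0$. Thus $E$ has the required rank, degree and cohomology; it remains to show that $E$ is stable for a generic choice of $(p_i,\epsilon)$.

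Consider a potential destabilizing subbundle $G\subset E$ of rank $s$. Either $G\subset F_0$, in which case $\mu(G)\leq e_0 < \mu(E) = e_0 + m/n$ by the strict inequality $e > ne_0$, and $G$ cannot destabilize; or $G$ surjects onto some $\bigoplus_{i\in I}\K(p_i)$ with $\emptyset\neq I\subseteq\{1,\dots,m\}$, so $\deg G = \deg(G\cap F_0) + |I|$. Because the $L_j$ are pairwise non-isomorphic, the rank-$s$ subbundles of $F_0$ of maximal slope $e_0$ are precisely the direct summands $\bigoplus_{j\in S}L_j$ with $|S|=s$. In the destabilizing regime $\mu(G)>\mu(E)$ the bound $\deg(G\cap F_0)\leq se_0$ forces $n|I|>sm$, and hence $s<n$. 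For a maximal-type destabilizer with $G\cap F_0\cong\bigoplus_{j\in S}L_j$, existence reduces to the closed linear condition $\epsilon_{j,i}=0$ for $j\notin S$ and $i\in I$, which cuts out a proper subvariety of the parameter space. Only finitely many combinatorial types $(S,I)$ arise, so their union is a proper closed subset and a generic $\epsilon$ avoids all of them.

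\textbf{Main obstacle.} The delicate step is handling destabilizers $G$ whose intersection $G\cap F_0$ has \emph{submaximal} slope in $F_0$. Such $G$ require more degree to come from $T$, and therefore impose stronger vanishing conditions on $\epsilon$; however, $G\cap F_0$ now varies over a positive-dimensional family (a Quot scheme of $F_0$), so one must bound the dimension of the resulting bad locus uniformly. Once this dimension count is carried out, irreducibility of the extension parameter space yields generic stability, producing the required element of $B^{nk_0}_{n,e}(V)$ and completing the proof of Theorem~\ref{nonemptiness}.
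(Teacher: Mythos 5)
Your construction is exactly the one the paper uses: pick pairwise non-isomorphic $L_1,\dots,L_n$ in the positive-dimensional locus $B^{k_0}_{1,e_0}(V)$, set $F_0=\bigoplus_i L_i$ (which already handles the boundary case $e=ne_0$ in $\tB^{nk_0}_{n,ne_0}(V)$), and take a general elementary transformation $0\to F_0\to E\to T\to 0$ of length $e-ne_0\ge 1$, noting $h^0(V\otimes E)\ge nk_0$ since tensoring by $V$ and taking sections is left exact. The only difference is the stability of $E$: the paper disposes of this in one line by citing Mercat's Th\'eor\`eme A.5 (a general elementary transformation of a direct sum of mutually non-isomorphic stable bundles of equal slope is stable), whereas you attempt to prove it from scratch, and that is where your argument has a genuine gap.

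Concretely, your slope analysis only rules out destabilizing subsheaves $G$ whose saturated intersection with $F_0$ has the maximal slope $e_0$, i.e.\ is a partial direct summand $\bigoplus_{j\in S}L_j$; for these you correctly get finitely many closed linear conditions $\epsilon_{j,i}=0$ on the extension class. But a destabilizer need only satisfy $\deg(G\cap F_0)+|I|>s(e_0+m/n)$, so $G\cap F_0$ can have slope strictly below $e_0$, compensated by a larger torsion contribution $|I|$. You acknowledge this yourself in the ``main obstacle'' paragraph: such subsheaves vary in positive-dimensional Quot schemes of $F_0$, and the required uniform dimension count (showing that the locus of $(p_i,\epsilon)$ admitting such a lift is proper in the parameter space) is stated as something ``to be carried out'' rather than carried out. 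Since generic stability is the entire content of the non-boundary case, the proof is incomplete as written. The missing statement is precisely \cite[Th\'eor\`eme A.5]{Mer}, so the gap can be closed either by performing that dimension estimate in detail or, as the paper does, by citing Mercat's result.
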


\noindent This directly generalises both the main result and the construction of \cite{Mer}.

We are also interested in generically smooth components of the loci $B^k_{n,e}(V)$. Our approach turns out to require the existence of certain bundles with well-behaved rank-$1$ twisted Brill--Noether loci and which are generically generated. In \S \ref{genPTI} we construct such bundles for some values of $r$, $g$ and $d$. We then prove in \S \ref{smoothness} our main result:

\begin{theorem} \label{main} Let $C$ be a general curve and $r$, $l$,  $m$ integers with $l := \left\lfloor \frac{g}{r} \right\rfloor$ and $0 \le m \le l-1$.  If $m=0$, suppose that $g \not\equiv 0 \mod r$. Write $k_0 = l - m$ and let $d$, $e_0$ be integers with $d + re_0 = r(g-2) + k_0$. Suppose that $e$ and $k$ are integers satisfying
\begin{equation}\label{eq12}
ne_0 + 1 \ \le \ e \ \le \ n(e_0+1) \quad \quad \hbox{and} \quad \quad re + nd - rn(g-1) \ \le \ k \ \le \ nk_0 . 
\end{equation}
Then, for general $V \in U(r, d)$, the twisted Brill--Noether locus $B^k_{n,e}(V)$ has a component $B^k_{n,e}(V)_0$ which is generically smooth and of the expected dimension.
%\begin{enumerate}
%\renewcommand{\labelenumi}{(\arabic{enumi})}
%\item For general $V \in U(r, d)$, the twisted Brill--Noether locus $B^k_{n,e}(V)$ has a component $B^k_{n,e}(V)_0$ which is generically smooth and of the expected dimension.
%\item For generic $E \in B^k_{n,e}(V)_0$, the bundle $\Kc \otimes E^* \otimes V^*$ is generically generated.
%\end{enumerate}
\end{theorem}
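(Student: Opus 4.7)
The plan is to exhibit a bundle $E_0$ of rank $n$ and degree $e$ at which the Petri trace map is injective, then deform to a stable bundle within the parameter space of twisted coherent systems from \S\ref{preliminaries} while preserving injectivity. The numerical hypotheses ($k_0 = l-m$, $l = \lfloor g/r \rfloor$, $m \le l-1$, with the exception when $m=0$) yield $\rho^{k_0}_{1, e_0}(V) = g - rk_0 \ge 1$, so by Theorem \ref{LTiB} and \S\ref{genPTI}, for a general $V \in U(r,d)$ the rank-$1$ twisted Brill--Noether locus $B^{k_0}_{1, e_0}(V)$ is nonempty, and a generic point $L \in B^{k_0}_{1, e_0}(V)$ satisfies the Petri Trace Injectivity ($\PTI$) property together with generic generation of $V \otimes L$.

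\textbf{Construction and block decomposition.} Writing $e = n e_0 + s$ with $1 \le s \le n$, I would pick $n - s$ general line bundles $L_i \in B^{k_0}_{1, e_0}(V)$ and $s$ general line bundles $L_i$ of degree $e_0 + 1$; in both cases $\chi(V \otimes L_i) = k_0$, so generically $h^0(V \otimes L_i) = k_0$. Setting $E_0 := L_1 \oplus \cdots \oplus L_n$, we obtain $\deg E_0 = e$ and $h^0(V \otimes E_0) = nk_0 \ge k$. The Petri trace map
\[
\mu_{V, E_0} : H^0(V \otimes E_0) \otimes H^0(K_C \otimes V^* \otimes E_0^*) \to H^0(K_C \otimes \End E_0)
\]
decomposes into blocks
\[
\mu_{i,j} : H^0(V \otimes L_i) \otimes H^0(K_C \otimes V^* \otimes L_j^{-1}) \to H^0(K_C \otimes L_i \otimes L_j^{-1}).
\]
Diagonal blocks ($i = j$) are the rank-$1$ Petri trace maps for $L_i$, injective by $\PTI$. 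For off-diagonal blocks ($i \ne j$), injectivity should follow from a twisted analogue of the classical Petri theorem, since $C$ is general and $V$ and the $L_i$'s are generic. A routine block-by-block argument then yields injectivity of $\mu_{V, E_0}$ restricted to $\Lambda \otimes H^0(K_C \otimes V^* \otimes E_0^*)$ for every $k$-dimensional subspace $\Lambda \subseteq H^0(V \otimes E_0)$.

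\textbf{Deformation to a stable bundle.} The bundle $E_0$ is not stable (its summands are line bundles, polystable when $s \in \{0, n\}$ and not even semistable otherwise), so the final step is to move $(E_0, \Lambda)$ to a pair with stable underlying bundle. Using generic generation of the $V \otimes L_i$, I would perform a general elementary transformation of $E_0$ (or deform via general extensions of subsets of the $L_i$'s) inside the parameter space $\cG^k$ of twisted coherent systems, producing a pair $(E, \Lambda')$ with $E$ stable and $h^0(V \otimes E) \ge k$; Theorem \ref{nonemptiness} ensures that stable $E$ exist in the relevant stratum. Upper semicontinuity of the rank of the Petri trace map then preserves injectivity at $E$, making $E$ a smooth point of $B^k_{n,e}(V)$ of the expected dimension and yielding a component $B^k_{n,e}(V)_0$ that is generically smooth of the expected dimension. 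The main obstacle will be step two: establishing injectivity of the off-diagonal Petri blocks $\mu_{i,j}$ for $i \ne j$ and guaranteeing that Petri injectivity propagates through the deformation to a stable $E$; both hinge on careful use of the genericity hypotheses on $V$ and $C$.
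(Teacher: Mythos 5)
Your skeleton (reduce to rank one, take $L_1,\ldots,L_n$ with $h^0(V\otimes L_i)=k_0$, block-decompose the Petri trace map, then reach a stable bundle) is the same as the paper's, but the two steps you defer are precisely where the paper's actual work lies, and as written they are genuine gaps. First, the off-diagonal blocks $\mu_{i,j}\colon H^0(V\otimes L_i)\otimes H^0(K_C\otimes L_j^{-1}\otimes V^*)\to H^0(K_C\otimes L_j^{-1}\otimes L_i)$, $i\ne j$: there is no ``twisted analogue of the classical Petri theorem'' to quote here. Theorem \ref{TiB} only gives injectivity when the same line bundle appears on both sides, and even for $V=\Oc$ the bilinear statement for two \emph{distinct} special line bundles is not the Petri theorem. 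The paper proves exactly this in Lemma \ref{lemcs}, by a specialisation-to-the-diagonal argument: the pairs $((L_i,\Lambda_i),(L_j,\Lambda_j))$ are taken general in $U\times U$ inside one irreducible component $X$ of $G^{k_0}(V,\cP^{e_0})$, an auxiliary point $p$ is inserted so that the target $H^0(K_C\otimes N^{-1}\otimes L(p))$ has constant dimension $g$ (hence the relevant sheaves are locally free and injectivity is open), and injectivity on the diagonal — which \emph{is} the Teixidor/Petri-generality input — propagates to general off-diagonal pairs. Without this lemma or a substitute, your proof has a hole at its centre. (A minor slip: for $\deg L_i=e_0$ one has $\chi(V\otimes L_i)=k_0-r$, not $k_0$.)

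Second, the passage to a stable bundle. Your $E_0=\bigoplus L_i$ is unstable, and ``deform to a stable $E$ and use semicontinuity of the Petri rank'' does not work as stated: $h^0(V\otimes E)$ is only upper semicontinuous, so nearby stable bundles may have fewer sections and need not lie in $B^k_{n,e}(V)$ at all, and Theorem \ref{nonemptiness} gives nonemptiness but no control of $h^0$ or of Petri injectivity at the stable points it produces. The paper avoids any such deformation of Petri data: it takes $E$ to be a \emph{general elementary transformation} $0\to F\to E\to\tau\to 0$ with $F=\bigoplus L_i$ (all $L_i$ of degree $e_0$), so $E$ is stable by Mercat's theorem and $h^0(V\otimes E)\ge nk_0$ automatically; the reverse inequality $h^0(V\otimes E)= nk_0$ follows by semicontinuity from the single bundle $E_0=\bigl(\oplus_{i\le e_1}L_i(p_i)\bigr)\oplus\bigl(\oplus_{i>e_1}L_i\bigr)$, which requires $h^0(V\otimes L(p))=k_0$ — i.e.\ generic generatedness of $K_C\otimes(V\otimes L)^*$ for general $L\in B^{k_0}_{1,e_0}(V)$, supplied by the construction of \S\ref{genPTI} \emph{transferred to $V\otimes L$ via the irreducibility of the PTI locus} (Theorem \ref{PTIIrrComp}), a step you assert but do not justify. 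Finally, Petri $E$-trace injectivity is deduced directly from the injectivity of $\mu_F$ through the inclusion $H^0(K_C\otimes E^*\otimes V^*)\subset H^0(K_C\otimes F^*\otimes V^*)$, and the lower values of $k$ in \eqref{eq12} are handled by the stratification argument of Proposition \ref{OneImpliesAll}, not by choosing summands of degree $e_0+1$. You should rework your steps two and three along these lines rather than appealing to an unavailable Petri statement and to a semicontinuity argument that points the wrong way.
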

%\writein{Something on the strictly semistable tw BN locus for $e = ne_0$. What if $\rho^{k_0}_{1, e_0}(V) = 1$? Mercat only gets semistable bundles here.}

The conditions here may look rather restrictive. Note however that \eqref{eq12} is more or less equivalent to those of \cite{TiB91} and \cite{Mer}. Moreover, if $k \le re+nd-rn(g-1)$, then $\bv=U(n,e)$.

To prove that $\bv$ is generically smooth and of the expected dimension at a point $E$, we have to show that the generalised trace map
\[ H^0 ( C, V \otimes E) \otimes H^0 (C, \Kc \otimes E^* \otimes V^*) \to H^0 (C, \Kc \otimes \End E) \]
is injective (details in \S \ref{preliminaries}). For this type of question, Teixidor i Bigas's generalisation of limit linear series to vector bundles of higher rank has been applied in many situations; for example \cite{TiB91}, \cite{TiB07}, \cite{CMTiB} and \cite{TiB14}. Although we do not use limit linear series directly, several of our proofs rely on the main result of \cite{TiB14}.

In Section \ref{TangentCones}, we consider the tangent cones of $\bv$, which can be studied using the same techniques as in \cite{ACGH} and \cite{CTiB}. Using Theorem \ref{main}, we describe the tangent cones as determinantal varieties and compute their degrees. We also give a geometric description of the tangent cones for large values of $h^0 ( V \otimes E )$, generalising \cite[VI, Theorem 1.6 (i)]{ACGH} on secant varieties of canonical curves.

Finally, in Section \ref{negative}, we describe some twisted Brill-Noether loci which are non-empty but have negative expected dimension. These are closely connected with varieties of maximal subbundles and we exploit results of \cite{Oxb} in discussing them. This gives another motivation for studying twisted Brill--Noether loci: As these examples arise for general $C$ and $V$ (with prescribed numerical properties), twisted Brill--Noether loci give a way of systematically obtaining determinantal varieties of larger than expected dimension. This line of research will be further pursued in the future.

\subsection*{Acknowledgements:} We thank Andr\'e Hirschowitz for answering several questions on rank stratifications.

\subsection*{Notation:} We work over an algebraically closed field $\K$ of characteristic zero. 
We denote a locally free sheaf and the corresponding vector bundle by the same letter. If $F$ is an $\Oc$-module, we abbreviate $H^i(C, F)$, $h^i(C, F)$ and $\chi(C, F)$ respectively to $H^i(F)$, $h^i(F)$ and $\chi(F)$. If $D$ is a divisor on $C$, we denote $F \otimes \Oc(D)$ by $F(D)$. The fibre of a bundle $V$ at $p \in C$ will be denoted $V|_p$. If $\cV \to S \times C$ is a family of bundles parametrised by $S$, we denote the restriction $\cV|_{\{s\} \times C}$ by $\cV_s$. We suppose throughout that $k\ge1$.

\section{Background}\label{back}

In this section, we expand on the background to our paper already referred to in the introduction.The fundamental results on $W_d^r (C)$ are as follows (see \cite[Chapter V]{ACGH}):

\begin{enumerate}
 \item [(i)] \textsl{Existence theorem:} For any curve, $W^r_d(C)$ is non-empty if $\rho(g,d,r)\ge 0$. %statement \cite[p. 206]{ACGH}; see \cite{Kem71} and \cite{KL72}
 \item [(ii)] \textsl{Connectedness theorem:} For any curve, $W^r_d(C)$ is connected if $\rho(g,d,r)\ge 1$. %statement \cite[p. 212]{ACGH}; see \cite{FL81}
 \item [(iii)] \textsl{Dimension theorem:} For a general curve, $W^r_d(C)=\emptyset$ if $\rho(g,d,r)< 0$; if $0\le\rho(g,d,r)\le g$, then $W^r_d(C)$ has pure dimension $\rho(g,d,r)$. %statement $\dim ( G^r_d (C) ) = \rho$ for all $\rho$ in \cite[p. 214]{ACGH}; if $\rho \le g$ then by the theory of determinantal varieties $\dim ( W^r_d(C) ) \ge \rho$ also. See \cite{GH80})
 \item [(iv)] \textsl{Smoothness theorem:} For a general curve, $\Sing(W^r_d(C))=W^{r+1}_d(C)$. %statement \cite[p. 214--215]{ACGH}; see \cite{Gie82})
 \item [(v)] For a general curve, $W_d^r (C)$ is irreducible if $\rho(g, d, r) \ge 1$. %\statement \cite[p. 214]{ACGH}; follows from smoothness and connectedness of $G^r_d$
\end{enumerate}
\noindent Note that if $\rho(g,d,r)\ge g$, then $W^r_d(C)=\Pic^d(C)$.

 Many of the basic questions on nonemptiness of $B^k_{n,e}$ were answered in \cite{TiB91} and \cite{Mer}, and more detailed results have been obtained in several cases. However, analogues of statements (i)--(v) above may be false in higher rank. See \cite{GTiB} for an overview of the theory and a survey of results and techniques. For the links between Brill-Noether theory and the moduli of coherent systems, see \cite{Brad} and \cite{BGPMN}. See also \cite{New11} for a survey of results and open problems on coherent systems; note however that there are many more recent results in this area.

When $B^k_{n, e}(V)$ has the expected dimension, one has
\[ B^{k+1}_{n, e}(V) \ \subseteq \ \Sing \left( B^k_{n, e}(V) \right) . \]
This containment may, however, be strict. See \cite{CTiB} for a detailed discussion of singular points $E \in \Theta_V$ satisfying $\mult_E \left( \Theta_V \right) > h^0 (C, V \otimes E)$. 

%\underline{$k \ge 1$, $n=1$:}
As already remarked, for $n=1$, analogues of several of the fundamental results for $W_d^r (C)$ are also valid for sufficiently general bundles of higher rank:

\begin{theorem} \label{LTiB} Let $C$ be any curve of genus $g \ge 2$. Let $V$ be a vector bundle of rank $r$ and degree $d$. Let $k \ge 1$ and $e$ be integers.
\begin{enumerate}
\renewcommand{\labelenumi}{(\arabic{enumi})}
\item If $\rho^k_{1, e}(V) \ge 0$, then $B^k_{1, e}(V)$ is non-empty.
\item If $\rho^k_{1, e}(V) \ge 1$, then $B^k_{1, e}(V)$ is connected.
\item Suppose $C$ is a general curve and $V$ a general bundle. If $\rho^k_{1, e}(V) < 0$, then $B^k_{1, e}(V)$ is empty. If $0\le\rho^k_{1,e}(V) \le g$, then $B^k_{1, e}(V)$ has pure dimension $\rho^k_{1,e}(V)$.
\item Suppose $C$ is a general curve and $V$ a general bundle. Then $\Sing \left( B^k_{1, e}(V) \right) = B^{k+1}_{1, e}(V)$.
\item Suppose $C$ is a Petri curve and $V$ a general bundle. If $\rho^k_{1, e}(V) \ge 1$, then $B^k_{1,e}(V)$ is irreducible.
\end{enumerate} \end{theorem}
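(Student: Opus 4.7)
The plan is to realize $B^k_{1,e}(V)$ as a determinantal locus in $\Pic^e(C)$ and then invoke the classical toolbox of degeneracy-locus theory, with $V$ playing the role that $\Oc$ plays in the untwisted theory.

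First I would fix an effective divisor $D$ on $C$ of degree large enough that $h^1(V \otimes L(D)) = 0$ for every $L \in \Pic^e(C)$. Writing $\mathcal{L}$ for a Poincar\'e line bundle on $\Pic^e(C) \times C$ and $\pi$ for the projection to $\Pic^e(C)$, the pushforward $\mathcal{F} := \pi_*( \mathcal{L}(D) \otimes V)$ (with $V$ pulled back from $C$) is locally free on $\Pic^e(C)$; restriction along $D$ produces a morphism $\varphi \colon \mathcal{F} \to \cG$ of vector bundles whose degeneracy locus of rank $\le \chi(V \otimes L(D)) - k$ is precisely $B^k_{1,e}(V)$, with expected codimension the twisted Brill--Noether codimension. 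With this setup in place, parts (1) and (2) follow in the standard way: nonemptiness when $\rho^k_{1,e}(V) \ge 0$ is existence of degeneracy loci of expected codimension on a projective variety, and connectedness when $\rho^k_{1,e}(V) \ge 1$ is the Fulton--Lazarsfeld connectedness theorem, applied after arranging $\cG - \mathcal{F}$ to be suitably ample by enlarging $D$.

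For parts (3) and (4), the approach of Hirschowitz \cite{Hir2} reduces matters to showing that for a general $V$ the twisted Petri map at $L \in B^k_{1,e}(V)$,
\[ H^0(V \otimes L) \otimes H^0(\Kc \otimes V^* \otimes L^{-1}) \ \to \ H^0(\Kc \otimes V^* \otimes V) , \]
is injective. Injectivity pins down the Zariski tangent space to $B^k_{1,e}(V)$ at $L$ as having the expected dimension and shows that the differential of $\varphi$ has maximal rank off $B^{k+1}_{1,e}(V)$; this yields the pure-dimension statement and identifies $\Sing\left(B^k_{1,e}(V)\right)$ with $B^{k+1}_{1,e}(V)$. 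The emptiness in (3) then follows, since a determinantal locus along which the Petri map is everywhere injective cannot exist when $\rho^k_{1,e}(V) < 0$. For part (5), irreducibility for a Petri curve follows from the Fulton--Lazarsfeld irreducibility theorem for degeneracy loci, whose ampleness hypotheses can again be met by a suitable choice of $D$ once generic Petri injectivity has been established.

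The main obstacle is proving injectivity of the twisted Petri map for general $V$. I would argue by degeneration: specialise $V$ either to a direct sum of line bundles on $C$ or to a bundle on a reducible nodal specialisation, so that the twisted Petri map decomposes into classical Petri maps on $C$ or on its components, and then appeal to Gieseker's theorem on general curves. Openness of injectivity transfers the property back to a general stable $V$. The delicate point, absent from the classical case $V = \Oc$, is that both tensor factors are already twisted by $V$, so the specialisation must be arranged carefully to keep all of the resulting multiplication maps under control.
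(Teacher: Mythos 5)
The central gap is in parts (3) and (4): you are using the wrong Petri map. Since $B^k_{1,e}(V)$ sits inside $\Pic^e(C)$, the only available deformations are those of $L$, i.e.\ classes in $H^1(\Oc)$; dually, when $h^0(V\otimes L)=k$ the Zariski tangent space to $B^k_{1,e}(V)$ at $L$ is the annihilator of the image of the Petri \emph{trace} map
\[ \mu_L \colon H^0(V\otimes L)\otimes H^0(\Kc\otimes L^{-1}\otimes V^*) \ \to \ H^0(\Kc), \]
obtained by composing the multiplication map with the trace $\End V\to\Oc$. Injectivity of the map you wrote, with target $H^0(\Kc\otimes V^*\otimes V)$, is strictly weaker: it is implied by injectivity of $\mu_L$ but does not imply it, and it does not control the tangent space inside $\Pic^e(C)$. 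So it gives neither the pure-dimension statement nor $\Sing\left(B^k_{1,e}(V)\right)=B^{k+1}_{1,e}(V)$, nor the emptiness when $\rho^k_{1,e}(V)<0$. This distinction is exactly why the paper works throughout with ``Petri trace injectivity'', and the statement you actually need is precisely the theorem of Teixidor i Bigas \cite{TiB14}, which the paper simply cites for (3) and (4) (as it cites \cite{Gh} and \cite{Laz} for (1) and (2), and \cite{Hir2} for (5)); your sketch of (1) and (2) via degeneracy loci and Fulton--Lazarsfeld is consistent with those sources.

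Your proposed degeneration does not repair this. If $V$ specialises to $\bigoplus_i N_i$, the trace map becomes the \emph{sum} of the classical Petri maps $H^0(N_i\otimes L)\otimes H^0(\Kc\otimes L^{-1}\otimes N_i^{-1})\to H^0(\Kc)$, all landing in the same space $H^0(\Kc)$; Gieseker--Petri makes each summand injective but says nothing about injectivity of the sum, whose images can overlap. This failure of the problem to decompose is the entire difficulty, and it is why \cite{TiB14} argues with limit linear series on a chain of elliptic curves rather than by a naive specialisation on a fixed curve; the ``delicate point'' you defer is in fact the whole content of (3)--(4). Two further issues: there is no Fulton--Lazarsfeld \emph{irreducibility} theorem for degeneracy loci---irreducibility in (5) is Hirschowitz's result \cite[Th\'eor\`eme 1.2]{Hir2}, recovered in this paper (Theorem \ref{IrrPGfamilies}) from connectedness plus smoothness of the parameter space $G^k$, which again requires trace injectivity---and (5) is asserted for an arbitrary Petri curve, so a Petri-injectivity statement proved only for a general curve by degeneration would not suffice there.
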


\begin{proof} Statement (1) was proven in \cite{Gh} for general $V$, and for all $V$ in \cite[(2.6)]{Laz}. Part (2) is \cite[(2.7)]{Laz}. Parts (3) and (4) follow from \cite{TiB14}. Lastly, (5) is \cite[Th\'eor\`eme 1.2]{Hir2}. \end{proof}

The infinitesimal study of the Brill-Noether loci is the key to the proofs of (3) and (4). We do not include this here because we will describe it in detail for $B^k_{n,e}(V)$ (and indeed for families of bundles) in the next section.

\section{Preliminaries on twisted Brill--Noether loci} \label{preliminaries}

Although generalised Brill--Noether loci are by now very familiar objects, there are fewer sources focusing primarily on twisted Brill--Noether loci. We will therefore give a detailed introduction to the subject with emphasis on functorial aspects.

\subsection{The twisted Brill--Noether locus of a pair of families} \label{BNfamily}

Let $\cV \to S \times C$ be a family of bundles of rank $r$ and degree $d$, and $\cE \to T \times C$ a family of bundles of rank $n$ and degree $e$. Set-theoretically, we define
\[ B^k ( \cV, \cE ) \ := \ \{ (s, t ) \in S \times T : h^0 ( \cV_s \otimes \cE_t ) \ge k \} . \]
Scheme-theoretically, this is a determinantal locus, as we will now show using a standard construction. Let $D$ be an effective divisor of large degree on $C$ satisfying $h^1 ( \cV_s \otimes \cE_t ( D)) = 0$ for all $(s, t) \in S \times T$. We have a diagram of projections
\[ \xymatrix{ & S \times T \times C \ar[dl]_{p_{13}} \ar[d]^{p_{12}} \ar[dr]^{p_{23}} \ar[r]^-{p_3} & C \\
 S \times C & S \times T & T \times C . } \]
Then over $S \times T \times C$, we have the short exact sequence
\[ 0 \to p_{13}^*\cV \otimes p_{23}^*\cE \to p_{13}^*\cV \otimes p_{23}^* \cE \otimes p_3^* \Oc ( D ) \to \frac{p_{13}^*\cV \otimes p_{23}^* \cE \otimes p_3^* \Oc ( D ) }{p_{13}^*\cV \otimes p_{23}^*\cE} \to 0 . \]
Pushing down to $S \times T$, we obtain a complex $\gamma \colon K^0 \to K^1$ of locally free sheaves satisfying
\[ \Ker \left( \gamma_{(s, t)} \right) \ \cong \ H^0 ( \cV_s \otimes \cE_t ) \quad \hbox{and} \quad \Coker \left( \gamma_{(s, t)} \right) \ \cong \ H^1 ( \cV_s \otimes \cE_t ) \]
for each $(s, t) \in S \times T$. Then $B^k (\cV, \cE )$ is the locus defined by the $(\rank K^0 - k) \times (\rank K^0 - k)$-minors of $\gamma$. In particular (see \cite[Chapter 2]{ACGH}), the locus $B^k (\cV, \cE)$ has a natural scheme structure and every component of it has dimension at least
\begin{equation} \dim S + \dim T - k(k -  re-nd  + rn(g-1)) . \label{expdimBk} \end{equation}
From the determinantal description it also follows that $B^{k+1} ( \cV, \cE) \subseteq \Sing \left( B^k ( \cV, \cE) \right)$, and moreover that the loci $B^k ( \cV, \cE )$ define a rank stratification on $S \times T$. We will return to this aspect in \S \ref{stratification}.

\begin{remark} \label{functoriality} This construction is symmetric in $\cV$ and $\cE$. It is functorial in the sense that if $\phi \colon S' \to S$ and $\psi \colon T' \to T$ are morphisms, then
\[ B^k \left( (\phi \times \Iden_C) ^* \cV , (\psi \times \Iden_C)^* \cE \right) \]%\ \cong \ (\phi \times \psi)^* B^k ( \cV, \cE ) \]
is defined by the $(\rank K^0 - k) \times (\rank K^{0} - k)$-minors of $(\phi \times \psi)^* \gamma$. \end{remark}

\begin{definition} Let $V$ be a vector bundle of rank $r$ and degree $d$, considered as a family over $\Spec \K \times C$. By \cite[Proposition 2.4]{NR75} there exists an \'etale cover $\tUne \to U(n, e)$ (which can be taken to be the identity if $\gcd(n, e) = 1$) which carries a Poincar\'e family $\cE \to \tUne \times C$. Then the twisted Brill--Noether locus $\bv$ is defined as the image of the moduli map $B^k ( V, \cE) \to U(n, e)$. Writing $\chi = re + nd - rn(g-1)$, the expected dimension of $\bv$ is the Brill--Noether number
\[ \rho^k_{n,e} (V) \ = \ \rho^k_{n, e, r, d} \ := \ \dim U(n, e) - k ( k - \chi ) . \]
\end{definition}

\noindent The following is straightforward to check:

\begin{proposition}\label{prop23} Let $V$ be any bundle of rank $r$ and degree $d$ over $C$.
\begin{enumerate}
\item[(1)] $\bv$ is a proper sublocus of $U(n, e)$ only if $k > \chi$.
%\[ \chi \ = \ re + nd - rn(g-1) \ < \ k . \]
\item[(2)] If $V$ is stable, then $\bv$ is non-empty only if $re + nd > 0$ or $(n,e)=(r,-d)$. In the latter case, $B^1_{r, -d}(V) = \{ V^* \}$ and $B^k_{r,-d}$ is empty for $k \ge 2$.
\item[(3)] If $V$ is semistable, then $\bv$ is non-empty only if $re + nd > 0$ or $re + nd = 0$ and $r \ge n$.
\item[(4)] For any line bundle $L$ of degree $\ell$, there is a canonical isomorphism
\[ \xymatrix{ \bv \ar[r]^-{\sim} & B^k_{n, \, e-n\ell}(V \otimes L) } \]
given by $E \mapsto L^{-1} \otimes E$.
\item[(5)] Via Serre duality, the association $E \mapsto E^*$ gives an isomorphism
\[ \bv \ \xrightarrow{\sim} \ B^{k - \chi}_{n, -e} (\Kc \otimes V^*) . \]
\begin{comment}
\[ \{ E \in U(n,e) : h^0 (V \otimes E) \ge k \} \]
\[ \{ E \in U(n,e) : h^1 (V \otimes E) \ge k - \chi(V \otimes E) \} \]
\[ \{ E \in U(n,e) : h^0 (\Kc \otimes E^* \otimes V^*) \ge k - (re + nd - rn(g-1)) \} \]
\[ \{ F \in U(n,-e) : h^0 (\Kc \otimes F \otimes V^*) \ge k - re - nd + rn(g-1) \} \]
\end{comment}
\end{enumerate} \end{proposition}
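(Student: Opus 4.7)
The plan is to prove the five parts in turn, each reducing to a standard calculation: semicontinuity and Riemann--Roch for (1); a tensor identity for (4); Serre duality for (5); and slope (semi)stability applied to a nonzero sheaf map for (2) and (3).

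For (1), contrapositively, if $k \le \chi$ then Riemann--Roch gives $h^0(V \otimes E) \ge \chi(V \otimes E) = \chi \ge k$ for every $E \in U(n,e)$, so $B^k_{n,e}(V) = U(n,e)$. Part (4) is immediate from $V \otimes E \cong (V \otimes L) \otimes (L^{-1} \otimes E)$: the assignment $E \mapsto L^{-1} \otimes E$ preserves stability, shifts degree by $-n\ell$, and intertwines the $h^0$ conditions; naturality of the determinantal construction in \S\ref{BNfamily} promotes the bijection on $\K$-points to an isomorphism of schemes. For (5), Serre duality gives $h^0(V \otimes E) = h^1(\Kc \otimes V^* \otimes E^*)$, and Riemann--Roch on $\Kc \otimes V^* \otimes E^*$ rewrites this as $h^0(V \otimes E) = h^0(\Kc \otimes V^* \otimes E^*) + \chi$; since $E \mapsto E^*$ is an isomorphism $U(n,e) \xrightarrow{\sim} U(n,-e)$, it carries $B^k_{n,e}(V)$ isomorphically onto $B^{k - \chi}_{n, -e}(\Kc \otimes V^*)$.

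For the substantive parts (2) and (3), the key observation is that a nonzero element of $H^0(V \otimes E) = \Hom(V^*, E)$ is a nonzero sheaf map $f \colon V^* \to E$. Write $F := \mathrm{Im}(f)$, of rank $r_0$, and let $\bar F \subseteq E$ be its saturation. From semistability of $V^*$ one has $\mu(F) \ge -d/r$, strict whenever $V$ is stable and $r_0 < r$; from stability of $E$ one has $\mu(\bar F) \le e/n$, strict unless $\bar F = E$, i.e.\ $r_0 = n$. Combining $-d/r \le \mu(F) \le \mu(\bar F) \le e/n$ yields $re + nd \ge 0$, proving the semistable statement (3). When this is an equality, one must have $r_0 = n$, and $r_0 \le r$ then gives $n \le r$. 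When $V$ is stable, equality additionally forces $r_0 = r$, so $f$ is surjective onto a sheaf of rank $r = n$; the surjection $V^* \twoheadrightarrow F$ between torsion-free sheaves of equal rank is then an isomorphism, and $F \subseteq E$ with matching rank and degree forces $F = E$, giving $V^* \cong E$ and $(n,e) = (r,-d)$. In that exceptional case $h^0(V \otimes V^*) = h^0(\End V) = 1$ by simplicity of the stable bundle $V$, which yields both $B^1_{r,-d}(V) = \{V^*\}$ and the emptiness of $B^k_{r,-d}(V)$ for $k \ge 2$.

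The only real obstacle is the bookkeeping in the preceding paragraph: one has to track which of the three slope inequalities is strict as $r_0$ varies over $\{1, \dots, \min(r,n)\}$ and not overlook the rigid boundary case $V^* \cong E$ responsible for the isolated point of $B^1_{r,-d}(V)$. Beyond this, the argument is entirely formal.
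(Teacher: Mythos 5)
Your proof is correct, and since the paper offers no written argument for this proposition (it is stated as ``straightforward to check'', with only a commented-out Serre-duality chain for part (5)), your reasoning is exactly the intended standard one: semicontinuity/Riemann--Roch for (1), the tensor identity for (4), the Serre-duality rewriting for (5) matching the paper's hidden sketch, and the slope analysis of a nonzero map $V^* \to E$ for (2) and (3). The bookkeeping of which inequalities are strict, including the rigid case $E \cong V^*$ handled via simplicity of $V$, is carried out correctly.
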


%\begin{remark} \label{toptype} By (3), any twisted Brill--Noether locus $B^k_{n,e} (V)$ is isomorphic to $B^k_{n,e'} (V')$ for some $e'$ satisfying $e' \equiv e \mod n$ and some $V'$ with $0 \le \deg V' < r$. %Thus to describe $B^k_{n, e}(V)$ for all $V$ of rank $r$, it suffices to treat the cases $0 \le d < r$.
 %\end{remark}

\subsection{The tangent spaces of $B^k ( \cV, \cE)$}

We now recall some standard facts on deformations of bundles and sections. Suppose $W$ is a vector bundle with $h^0 (W) \ge 1$. Let $v \in H^1 (\End W)$ be a first-order infinitesimal deformation of $W$. By the argument in \cite[\S 2]{GTiB}, a section $s \in H^0 (W)$ is preserved by $v$ if and only if $s \cup v = 0$ in $H^1 (W)$. Thus the space of deformations preserving all sections of $W$ is exactly
\[ \Ker \left( \cup \colon H^1 (\End W) \to \Hom \left( H^0 ( W ), H^1 (W) \right) \right) . \]
We are interested in the case where $W$ is of the form $V \otimes E$ and $v$ is the class of a product of deformations $b \in H^1 ( \End V )$ and $h \in  H^1 ( \End E )$ of $V$ and $E$ respectively. By for example inspecting \v{C}ech cocycles, we see that $v = c(b, h)$ , where
\begin{equation} c(b, h) \ := \ b \otimes \Iden_E + \Iden_V \otimes h \ \in \ H^1 ( \End (V \otimes E)) . \label{Mapc} \end{equation}
\
More generally, let us consider again the families $\cV \to S \times C$ and $\cE \to T \times C$. Suppose $(s, t) \in S \times T$ is such that $h^0 ( \cV_s \otimes \cE_t ) = k$. We have a composed map
\begin{multline} T_s S \oplus T_t T \xrightarrow{\kappa} H^1 ( \End \cV_s ) \oplus H^1 ( \End \cE_t ) \xrightarrow{c} \\ H^1 \left( \End \left( \cV_s \otimes \cE_t \right) \right) \xrightarrow{\cup} \Hom \left( H^0 ( \cV_s \otimes \cE_t ), H^1 ( \cV_s \otimes \cE_t ) \right) \label{TangSpSeq} \end{multline}
where $\kappa$ is the Kodaira--Spencer map.

\begin{proposition} Suppose that $h^0 ( \cV_s \otimes \cE_t) = k$.
\begin{enumerate}
\item[(1)] The Zariski tangent space to $B^k ( \cV, \cE)$ at $(s, t)$ is given by
 \begin{equation} T_{(s, t)} B^k ( \cV, \cE ) \ = \ \Ker ( \cup \circ c \circ \kappa ) . \label{ZariskiTangentSpace} \end{equation}
\item[(2)] In particular, suppose that $S \times T$ is smooth at $(s, t)$. Then $B^k ( \cV, \cE )$ is smooth and of the expected dimension (\ref{expdimBk}) at $(s, t)$ if and only if $\cup \circ c \circ \kappa$ is surjective. \end{enumerate}\label{BkSmoothV1} \end{proposition}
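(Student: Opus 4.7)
The plan is to recognize Proposition \ref{BkSmoothV1} as an instance of the general tangent-space formula for determinantal loci, combined with the deformation-theoretic interpretation of the resulting map via the cup product and Kodaira--Spencer classes.

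Recall from \S\ref{BNfamily} that $B^k ( \cV, \cE )$ is cut out as the $(\rank K^0 - k)$-th degeneracy locus of the complex $\gamma \colon K^0 \to K^1$ of locally free sheaves on $S \times T$. At $(s,t)$ with $\dim \Ker \gamma_{(s,t)} = k$, the standard tangent-space formula for determinantal loci (see \cite[Chapter II]{ACGH}) gives
\[ T_{(s,t)} B^k (\cV, \cE) \ = \ \Ker \left( \delta \colon T_{(s,t)}(S \times T) \to \Hom(\Ker \gamma_{(s,t)}, \Coker \gamma_{(s,t)}) \right) , \]
where $\delta$ is induced by the derivative $d\gamma$ followed by the obvious restriction-and-projection. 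Under the identifications $\Ker \gamma_{(s,t)} \cong H^0 (\cV_s \otimes \cE_t)$ and $\Coker \gamma_{(s,t)} \cong H^1 (\cV_s \otimes \cE_t)$, claim (1) reduces to the assertion that $\delta = \cup \circ c \circ \kappa$.

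To establish this identification, I would invoke the analogue for a single family of bundles already implicit in \cite[\S 2]{GTiB}: for a family $\cF \to X \times C$, a point $x \in X$ and a section $\sigma \in H^0 (\cF_x)$, the obstruction to lifting $\sigma$ to first order along $v \in T_x X$ is the cup product $\kappa(v) \cup \sigma \in H^1 (\cF_x)$. Applying this to the family $p_{13}^* \cV \otimes p_{23}^* \cE$ on $(S \times T) \times C$, whose Kodaira--Spencer class at $(s, t)$ in direction $(v_s, v_t)$ is (by inspection of \v{C}ech cocycles together with formula (\ref{Mapc})) precisely $c(\kappa(v_s), \kappa(v_t)) \in H^1 (\End (\cV_s \otimes \cE_t))$, yields the desired expression for $\delta$. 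This is the only nontrivial step; the rest is bookkeeping.

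For (2), smoothness of $S \times T$ at $(s,t)$ gives $\dim T_{(s,t)}(S \times T) = \dim S + \dim T$. Since $h^0 ( \cV_s \otimes \cE_t ) = k$ and hence $h^1 ( \cV_s \otimes \cE_t) = k - \chi$, the target of $\cup \circ c \circ \kappa$ has dimension $k(k - \chi)$. Thus by (1),
\[ \dim T_{(s,t)} B^k ( \cV, \cE ) \ \ge \ \dim S + \dim T - k(k - \chi) , \]
with equality precisely when $\cup \circ c \circ \kappa$ is surjective. Comparing with the expected dimension (\ref{expdimBk}) gives the equivalence in (2). The main obstacle will be the identification of $\delta$ with $\cup \circ c \circ \kappa$: conceptually transparent but notationally fiddly, since one must track how the pushforward construction of $\gamma$ interacts with the cup-product pairing on fibres.
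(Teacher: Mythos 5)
Your proposal is correct and follows essentially the same route as the paper: the paper's proof of (1) simply asserts that the tangent space consists of the deformations preserving all sections of $\cV_s\otimes\cE_t$, which by the discussion preceding the proposition (the cup-product criterion of \cite[\S 2]{GTiB} together with the formula (\ref{Mapc}) for the Kodaira--Spencer class of the tensor-product family) is exactly $\Ker(\cup\circ c\circ\kappa)$, while you make the same identification explicit through the degeneracy-locus tangent-space formula for the complex $\gamma\colon K^0\to K^1$. Your dimension count for (2) is the same as the paper's, using that every component of $B^k(\cV,\cE)$ has dimension at least (\ref{expdimBk}).
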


\begin{proof} Since $T_{(s, t)} B^k ( \cV, \cE )$ consists of those deformations preserving all sections of $\cV_s \otimes \cE_t$, we obtain (1). By \eqref{expdimBk} and (1), we see that $B^k(\cV,\cE)$ is smooth of the expected dimension at $(s,t)$ if and only if 
\[ \dim S + \dim T - \dim\Ker ( \cup \circ c \circ \kappa )=k(k-re-nd+rn(g-1)).\]
By (\ref{TangSpSeq}), this is equivalent to the surjectivity of $\cup \circ c \circ \kappa$.
\end{proof}

\subsection{The Petri trace map}

For any vector bundle $W$, it is well known that via Serre duality, $\cup \colon H^1 ( \End (W) ) \to \Hom \left( H^0 (W), H^1 (W) \right)$ is dual to the \textsl{Petri multiplication map} 
\begin{equation*}\mu \colon H^0 ( W ) \otimes H^0 ( \Kc \otimes W^*) \to H^0 (\Kc \otimes \End W).
\end{equation*} 
Let us use this map to reformulate (\ref{ZariskiTangentSpace}).

Firstly, some notation: For bundles $V$ and $E$, there is a vector bundle map
\[ c \colon (\End V) \oplus ( \End E ) \ \to \End ( V \otimes E ) \]
inducing the cohomology map (\ref{Mapc}) considered above. We write $c_V$ and $c_E$ for the restrictions to the first and second factors respectively. Recall also that for any bundle $W$, the transpose gives a canonical identification of $\End W$ and $\End W^*$, which we will use freely.

Fix a vector bundle $V$. If we identify $\End V$ with $(\End V)^*$ by the trace pairing, a diagram chase shows that the trace map $\tr \colon \End V \to \Oc$ is dual to the map $\Oc \to \End V$ given by $\lambda \mapsto \lambda \cdot \Iden_V$. 
\begin{comment} Details:
\[ \xymatrix{ \Oc \ar[r]^{{^t\tr}} & (\End V)^* \\
 \Oc \ar[r]^c \ar[u]^= & \End V \ar[u]^T } \]
Here $T ( f ) = \tr( f \circ \cdot ) \in (\End V)^*$. We have
\[ T \circ c ( \lambda ) \ &= \ \tr ( \lambda \cdot \Iden_V \circ \cdot ) \ = \ \lambda \cdot \tr ( \cdot ) . \]
On the other hand,
\[ {^t \tr} ( \lambda ) \ = \ \left( \phi \mapsto \lambda \cdot \tr ( \phi ) \right) \ = \ \lambda \cdot \tr ( \cdot ) . \]
\end{comment}
 Thus for any bundle $E$, tensoring $\tr \colon \End V \to \Oc$ by $\End E$, we obtain a linear map
\[ \End ( V \otimes E ) \ \cong \ \End V \otimes \End E \to \End E \]
which is dual to $c_E$, and an induced map
\[ \tr_E \colon H^0 ( \Kc \otimes \End (E \otimes V) ) \ \to \ H^0 ( \Kc \otimes \End E) . \]
By Serre duality and the above discussion, $\tr_E$ is dual to
\[ c_E \colon H^1 ( \End E ) \to H^1 ( \End (V \otimes E)) . \]
Then by linear algebra, $c \colon H^1 ( \End V ) \oplus H^1 ( \End E ) \ \to \ H^1 ( \End ( V \otimes E) )$ is dual to
\[ ( \tr_V , \tr_E ) \colon H^0 ( \Kc \otimes \End (V \otimes E)) \ \to \ H^0 ( \Kc \otimes \End V) \oplus H^0 ( \Kc \otimes \End E) . \]

\begin{comment} Let $f \colon A \to C$ and $g \colon B \to C$ be vector space maps. Consider the sum $(f, g) \colon A \oplus B \to C$ given by $(a, b) \mapsto f(a) + b(c)$. Then the transpose of $(f, g)$ is given by $(^tf , {^tg}) \colon C^* \to A^* \oplus B^*$. \end{comment}

We can now formulate a dual version of Proposition \ref{BkSmoothV1}.

\begin{proposition}Suppose $h^0 ( \cV_s \otimes \cE_t) = k$. The Zariski tangent space $T_{(s, t)} B^k ( \cV, \cE )$ is the annihilator of the image of
\[ {^t\kappa} \circ ( \tr_V, \tr_E ) \circ \mu \colon H^0 ( \cV_s \otimes \cE_t ) \otimes H^0 ( \Kc \otimes \cE_t^* \otimes \cV_s^* ) \to T^*_s S \oplus T^*_t T . \]
In particular, if $S \times T$ is smooth at $(s, t)$, then $B^k ( \cV, \cE )$ is smooth and of the expected dimension at $(s, t)$ if and only if ${^t\kappa} \circ ( \tr_V, \tr_E ) \circ \mu$ is injective. \label{BkSmoothV2} \end{proposition}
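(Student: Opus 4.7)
The plan is to deduce Proposition \ref{BkSmoothV2} from Proposition \ref{BkSmoothV1} by dualising the composition $\cup \circ c \circ \kappa$ term by term, using the identifications already assembled in this subsection. The key general fact is that for any linear map $\phi \colon A \to B$ between finite-dimensional vector spaces, $\Ker(\phi)$ coincides with the annihilator in $A$ of the image of the transpose ${}^t\phi \colon B^* \to A^*$. Hence if I can show that the transpose of $\cup \circ c \circ \kappa$ is (canonically) ${}^t\kappa \circ (\tr_V, \tr_E) \circ \mu$, both claims will follow immediately from Proposition \ref{BkSmoothV1}.

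First I would assemble the three dualities. By Serre duality, the cup product
\[ \cup \colon H^1(\End(\cV_s \otimes \cE_t)) \to \Hom\bigl(H^0(\cV_s \otimes \cE_t), H^1(\cV_s \otimes \cE_t)\bigr) \]
is transpose to the Petri multiplication
\[ \mu \colon H^0(\cV_s \otimes \cE_t) \otimes H^0(\Kc \otimes \cE_t^* \otimes \cV_s^*) \to H^0(\Kc \otimes \End(\cV_s \otimes \cE_t)), \]
as recalled at the start of the subsection (using the canonical identification $\End W \cong \End W^*$ via transpose). Next, the paragraphs immediately preceding the statement established that the bundle map $c_E$ has dual $\End(V \otimes E) \to \End E$ obtained by tensoring $\tr \colon \End V \to \Oc$ with $\End E$, and similarly for $c_V$; passing to cohomology and applying Serre duality, the sum $c$ on $H^1(\End \cV_s) \oplus H^1(\End \cE_t)$ is transpose to $(\tr_V, \tr_E)$. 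Finally, the Kodaira--Spencer map $\kappa$ has as its transpose the map ${}^t\kappa \colon H^1(\End \cV_s)^* \oplus H^1(\End \cE_t)^* \to T_s^*S \oplus T_t^*T$, which under Serre duality is precisely the ${}^t\kappa$ of the statement with source $H^0(\Kc \otimes \End \cV_s) \oplus H^0(\Kc \otimes \End \cE_t)$.

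Composing these transposes in the reverse order and invoking functoriality of transposition, the transpose of $\cup \circ c \circ \kappa$ equals ${}^t\kappa \circ (\tr_V, \tr_E) \circ \mu$. By the annihilator principle stated above together with \eqref{ZariskiTangentSpace}, this gives
\[ T_{(s,t)} B^k(\cV, \cE) \ = \ \Ker(\cup \circ c \circ \kappa) \ = \ \mathrm{Ann}\bigl(\Image({}^t\kappa \circ (\tr_V, \tr_E) \circ \mu)\bigr), \]
which is the first assertion. For the second, Proposition \ref{BkSmoothV1}(2) says that smoothness of $B^k(\cV, \cE)$ at $(s,t)$ with expected dimension is equivalent to surjectivity of $\cup \circ c \circ \kappa$, and surjectivity of a linear map between finite-dimensional spaces is equivalent to injectivity of its transpose, so it is equivalent to injectivity of ${}^t\kappa \circ (\tr_V, \tr_E) \circ \mu$.

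There is no serious obstacle here: the content of the proposition lies entirely in the identification of the dual of $c$ with $(\tr_V, \tr_E)$, which has already been carried out in the discussion preceding the statement. The only point requiring care is to keep track of the canonical identifications $\End W \cong \End W^*$ and of Serre duality so that the composed transpose is genuinely the map $\mu$ (not $\mu$ twisted by a sign or by an auto-equivalence), but these are standard and were fixed earlier in the subsection.
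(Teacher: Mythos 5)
Your proposal is correct and is exactly the argument the paper intends: the proposition is stated as a "dual version" of Proposition \ref{BkSmoothV1}, with the content carried by the identifications (established in the preceding paragraphs) that $\cup$ is Serre-dual to $\mu$, that $c$ is dual to $(\tr_V,\tr_E)$, and that ${}^t\kappa$ is the transpose of the Kodaira--Spencer map, after which the kernel/annihilator and surjectivity/injectivity equivalences finish the proof just as you write.
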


\noindent The most important corollary of this proposition is:

\begin{corollary} Let $V$ be a bundle of rank $r$ and degree $d$. Suppose $E \in U(n,e)$ satisfies $h^0 ( V \otimes E ) = k$. The twisted Brill--Noether locus $\bv$ is smooth and of the expected dimension at $E$ if and only if
\begin{equation} \tr_E \circ \mu \colon H^0 ( V \otimes E ) \otimes H^0 ( \Kc \otimes E^* \otimes V^*) \to H^0 ( \Kc \otimes \End (V \otimes E)) \to H^0 ( \Kc \otimes \End E) \label{PetriEtrace} \end{equation}
is injective. \label{bvSmooth} \end{corollary}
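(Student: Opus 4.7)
The plan is to derive the corollary as a direct specialization of Proposition \ref{BkSmoothV2}. Take $\cV$ to be the bundle $V$ viewed as a family over $S \times C$ with $S = \Spec \K$, and take $\cE \to \tUne \times C$ to be the Poincar\'e family (as in the definition of $\bv$). Pick $t \in \tUne$ mapping to $E \in U(n,e)$, so that $\cE_t \cong E$ and the hypothesis $h^0(\cV_s \otimes \cE_t) = k$ reads $h^0(V \otimes E) = k$.

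First I would unwind the tangent data. Since $S$ is a single point, $T_s S = 0$, so the map in Proposition \ref{BkSmoothV2} reduces to
\[ {^t\kappa} \circ \tr_E \circ \mu \colon H^0(V \otimes E) \otimes H^0(\Kc \otimes E^* \otimes V^*) \to T^*_t \tUne, \]
with the $\tr_V$-component automatically trivial. Next, since $\tUne \to U(n,e)$ is \'etale and carries a Poincar\'e family, the Kodaira--Spencer map
\[ \kappa \colon T_t \tUne \to H^1(\End \cE_t) = H^1(\End E) \]
is an isomorphism (this is the standard versality property of the Poincar\'e family at a stable bundle; $U(n,e)$ is smooth at $E$ of dimension $n^2(g-1)+1$). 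Dualizing and using Serre duality $H^1(\End E)^* \cong H^0(\Kc \otimes \End E)$, we see that ${^t\kappa}$ is an isomorphism onto $T^*_t \tUne$. Therefore the composition ${^t\kappa} \circ \tr_E \circ \mu$ is injective if and only if $\tr_E \circ \mu$ is injective.

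Finally, I need to transfer smoothness from $B^k(V, \cE) \subset \tUne$ to $\bv \subset U(n,e)$. Since $\tUne \to U(n,e)$ is \'etale and $\bv$ is (up to the moduli quotient) the image of $B^k(V, \cE)$ under this \'etale map, smoothness and dimension are preserved: $B^k(V, \cE)$ is smooth of the expected dimension at $t$ precisely when $\bv$ is smooth of the expected dimension at $E$. Combining this with Proposition \ref{BkSmoothV2} and the previous paragraph yields the corollary.

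The only subtlety worth flagging is the passage through the \'etale cover $\tUne \to U(n,e)$; everything else is a direct reading of Proposition \ref{BkSmoothV2} in the case where the first parameter family is a point. No further analytic or infinitesimal input is needed.
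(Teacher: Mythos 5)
Your proposal is correct and is essentially the paper's argument: apply Proposition \ref{BkSmoothV2} with $\cV$ the singleton family $\{V\}$ and a family of deformations of $E$ whose Kodaira--Spencer map at $E$ is an isomorphism, so that ${^t\kappa}$ is an isomorphism and injectivity of ${^t\kappa}\circ(\tr_V,\tr_E)\circ\mu$ reduces to injectivity of $\tr_E\circ\mu$. The only cosmetic difference is that the paper invokes a local universal family for $E$ rather than the Poincar\'e family on the \'etale cover $\tUne$, which lets it bypass the (harmless, and correctly handled by you) descent of smoothness and dimension along the \'etale map $\tUne\to U(n,e)$.
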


\begin{proof} This follows from Proposition \ref{BkSmoothV2} applied to the family $\cV$ consisting of the single bundle $V$ and a local universal family $\cE$ for $E$, together with the fact that the Kodaira-Spencer map for the family $\cE$ at $E$ is an isomorphism.
\end{proof}

It will be convenient to make the following definition.

\begin{definition} For fixed $V$, write $\mu_E$ for the composed map $\tr_E \circ \mu$ in (\ref{PetriEtrace}). We say that $V$ is \textsl{Petri $E$-trace injective} if $\mu_E$ is injective. 
 If the trace map
\[ \mu_{\Oc} \colon H^0 ( V ) \otimes H^0 ( \Kc \otimes V^* ) \ \to \ H^0 ( \Kc ) \]
is injective, we say that $V$ is \textsl{Petri trace injective}. \end{definition}

Next, as it will be central to several proofs, let us state \cite[Theorem 1.1]{TiB14} precisely.

\begin{theorem} \label{TiB} Let $C$ be a general curve and $V$ a general vector bundle over $C$. Then for any degree $e$ and $L \in \Pic^e (C)$, the Petri trace map
\[ \mu_L \colon H^0 ( V \otimes L ) \otimes H^0 ( \Kc\otimes L^{-1} \otimes V^*) \to H^0 ( \Kc ) \]
is injective. \end{theorem}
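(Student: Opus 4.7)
The plan is to follow the standard specialization-to-a-reducible-curve approach used throughout the higher-rank limit-linear-series literature. First I would observe that, in a suitable relative setting over a base parametrising the triple $(C, V, L)$, the Petri trace map $\mu_L$ is the induced map on global sections of a morphism of locally free sheaves obtained (as in the construction of $B^k(\cV, \cE)$ in \S\ref{BNfamily}) by pushing forward tensor products to the base. Consequently $\dim\ker\mu_L$ is upper-semicontinuous, so it suffices to construct, for each admissible numerical type $(r, d, e)$, a single reducible nodal curve $C_0$ together with a torsion-free sheaf $V_0$ of rank $r$ and a line bundle $L_0$ such that the corresponding limit Petri trace map is injective.

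For the specialization I would degenerate $C$ to a chain $C_0 = E_1 \cup \cdots \cup E_g$ of $g$ elliptic components joined at nodes $p_i \in E_i \cap E_{i+1}$, which is well known to be a limit of general smooth curves of genus $g$. Extend $L$ to a line bundle on $C_0$ and extend $V$ to a torsion-free sheaf whose restriction to each component is a direct sum of line bundles of carefully prescribed degrees (generic rank-$r$ bundles on elliptic curves decompose this way). The higher-rank limit-linear-series machinery of Teixidor i Bigas then identifies sections of $V\otimes L$ on the general fibre with tuples of sections on the $E_i$ satisfying prescribed vanishing orders at the nodes, and similarly for sections of $\Kc\otimes L^{-1} \otimes V^*$.

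Next, I would decompose the Petri trace map componentwise: any element of $\ker\mu_{L_0}$ restricts on each $E_i$ to an element in the kernel of the local Petri trace map into $H^0(E_i, \Kc|_{E_i})$. Since $K_{E_i}\cong \mathcal{O}_{E_i}$ and since line bundles of nonzero degree on elliptic curves have either trivial $H^0$ or trivial $H^1$, the local Petri pairing reduces to an explicit linear-algebra computation about pairings between sections of line bundle summands of the decomposition of $V_0|_{E_i}$. By genericity of the splitting and a careful choice of the degrees of the summands, one arranges that each local map is injective, and residue computations at the nodes propagate these local vanishings into a global vanishing on $C_0$.

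The hard part is the combinatorial reduction in the third step. Unlike the classical case, where a limit linear series at a node is encoded by a single vanishing sequence, higher-rank limit linear series involve nested flags of subspaces on each side of the node together with matching conditions, and the trace pairing interleaves the flag data on $H^0(V\otimes L)$ and on $H^0(\Kc\otimes L^{-1}\otimes V^*)$. Choreographing the node conditions so that componentwise injectivity implies global injectivity, and doing this uniformly across all admissible degrees $e$, is the essential technical content that justifies quoting \cite[Theorem 1.1]{TiB14}; the elliptic-component computations themselves are comparatively routine once the bookkeeping is in place.
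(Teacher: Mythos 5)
This statement is not proved in the paper at all: it is imported verbatim as Theorem 1.1 of \cite{TiB14}, and the authors explicitly only \emph{state} it because it is used as an input elsewhere. So your sketch cannot be compared with an internal argument; it is, in effect, an outline of the strategy of the cited reference, and as a proof it has genuine gaps. The most concrete one is in your first step. The kernel dimension of $\mu_L$ is not naively upper semicontinuous in families, because the source $H^0(V\otimes L)\otimes H^0(\Kc\otimes L^{-1}\otimes V^*)$ is itself the fibre of pushforward sheaves that fail to be locally free exactly where $h^0$ jumps; semicontinuity only makes sense stratum by stratum, and the failure locus need not be closed across strata. The way such statements are actually reduced to a special fibre is via properness of the relative Picard scheme (or a compactified Jacobian / limit-linear-series formalism over the nodal curve): if Petri injectivity failed for \emph{some} $L$ on the general nearby smooth curve, one takes a limit of the pair (line bundle, kernel element) and must rule out \emph{every} possible limit datum on $C_0$ --- every multidegree, every limit of the kernel tensor. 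Your plan, as written, proposes to construct ``a line bundle $L_0$'' on $C_0$ with injective limit Petri map, which has the quantifier backwards: exhibiting one good $L_0$ proves nothing, since you do not control which $L$ on the smooth curve would witness the failure.

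The second gap you acknowledge yourself: the component-by-component analysis on the chain of elliptic curves, the nested vanishing/flag data at the nodes for the rank-$r$ limit of $V$, and the argument that local injectivity of the trace pairings propagates to global injectivity of the limit Petri map, are precisely the technical content of \cite{TiB14}. Deferring that step means the proposal is a plan for reproving the reference rather than a proof; within this paper the correct and intended justification of Theorem \ref{TiB} is simply the citation. If you do want to carry the degeneration out, the points to fix are the properness/limit-object framework in place of naive semicontinuity, and a uniform treatment of all limit multidegrees of $L$ and all torsion-free limits of $V$ on $C_0$, not a single well-chosen one.
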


\noindent This motivates another definition.

\begin{definition} A vector bundle $V$ is \textsl{Petri general} if $V$ is Petri $L$-trace injective for all $L \in \Pic(C)$. \end{definition}

\begin{remark} A curve $C$ is \textsl{Petri} in the usual sense if and only if $\Oc$ is a Petri general vector bundle. It is well known that the general curve $C$ is a Petri curve.
\end{remark} 

%\begin{remark} Clearly, the conditions of Petri injectivity and Petri trace injectivity coincide for line bundles. For $r %\ge 2$, a Petri trace injective bundle is Petri injective. The converse does not hold in general, but by Theorem %\ref{TiB}, it holds for sufficiently general $C$ and $V$. \end{remark}

\subsection{A partial desingularisation of $B^k ( \cV, \cE )$} \label{PartialDesing}

Here we generalise the construction $G^r_d(C)$ of \cite[IV.4]{ACGH} to twisted Brill--Noether loci.

For families $\cV$ and $\cE$, let us fix the effective divisor $D$ in \S \ref{BNfamily} and recall the complex $\gamma \colon K^0 \to K^1$. As $K^0$ is locally free, we have a Grassmannian bundle $\pi \colon \Gr(k, K^0) \to S \times T$. We define
\[ G^k ( \cV, \cE ) \ := \ \{ \Lambda \in \Gr ( k, K^0 ) : \gamma|_\Lambda = 0 \} . \]
This is a parameter space for triples $(V, E, \Lambda)$ where $\Lambda$ is a $k$-dimensional subspace of $H^0 ( V \otimes E)$. It seems natural to call such a triple a ``twisted coherent system'', but we do not pursue questions of moduli or stability here. When the family $\cV$ consists of a single vector bundle $V$, we write also $G^k(V,\cE)$.

Clearly, $\pi ( G^k ( \cV, \cE ) ) = B^k ( \cV, \cE)$ and $\pi^{-1} ( s, t) = \Gr (k, H^0 (\cV_s \otimes \cE_t))$. Let us describe the Zariski tangent spaces of $G^k ( \cV, \cE)$ at $(s,t)$.

\begin{proposition} \label{GkVE} \begin{enumerate} \item[(1)] Suppose $h^0 ( \cV_s \otimes \cE_t ) \ge k$. There is an exact sequence
\begin{multline} 0 \to \Hom \left( \Lambda, H^0 ( \cV_s \otimes \cE_t ) / \Lambda \right) \to T_{(\Lambda, s, t)} G^k ( \cV, \cE) \xrightarrow{d \pi} T_s S \oplus T_t T \to \\ \Hom \left( \Lambda, H^1 ( \cV_s \otimes \cE_t) \right) \label{GkTangSp} \end{multline}
where the last map is defined by $\cup \circ c \circ \kappa$ as in (\ref{TangSpSeq}), followed by restriction to $\Lambda$. Moreover, $\Image ( d\pi ) \ = \ \Image \left( {^t\kappa} \circ ( \tr_V , \tr_E ) \circ \mu \right)^\perp$.
\item[(2)] The locus $G^k ( \cV, \cE)$ is smooth and of dimension (\ref{expdimBk}) at $(s,t,\Lambda)$ if and only if the restricted map
\[ {^t\kappa} \circ (\tr_V , \tr_E ) \circ \mu \colon \Lambda \otimes H^0 ( \Kc \otimes E^* \otimes V^*) \to H^0(K_C\otimes \End V) \oplus H^0 ( \Kc \otimes \End E) \]
is injective. 
\item[(3)] In particular, if ${^t\kappa} \circ (\tr_V , \tr_E ) \circ \mu$ is injective and $h^0 ( \cV_s \otimes \cE_t ) > k$, then $G^k ( \cV, \cE )$ is a desingularisation of $B^k ( \cV, \cE)$ in a neighbourhood of $(s, t)$. \end{enumerate} \label{Gk} \end{proposition}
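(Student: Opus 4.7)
The plan is to realise $G^k(\cV,\cE)$ as the zero scheme of a canonical section on a Grassmannian bundle and to extract its tangent-space information from the differential of that section. On $\pi\colon\Gr(k,K^0)\to S\times T$ let $\mathcal{S}$ denote the tautological rank-$k$ subbundle. The restriction $\gamma|_{\mathcal{S}}$ defines a canonical global section $\sigma$ of the sheaf of homomorphisms from $\mathcal{S}$ to $\pi^*K^1$, and $G^k(\cV,\cE)$ is the zero scheme of $\sigma$. At a zero $(\Lambda,s,t)$, the Zariski tangent space to $G^k(\cV,\cE)$ is then the kernel of the differential $d\sigma\colon T_{(\Lambda,s,t)}\Gr(k,K^0)\to\Hom(\Lambda,K^1_{(s,t)})$.

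For (1) I would decompose $T_{(\Lambda,s,t)}\Gr(k,K^0)$ via the standard vertical--horizontal short exact sequence with vertical part $\Hom(\Lambda,K^0_{(s,t)}/\Lambda)$ and horizontal part $T_sS\oplus T_tT$, and evaluate $d\sigma$ on each summand: on a vertical $\phi\colon\Lambda\to K^0/\Lambda$ it is postcomposition with $\gamma_{(s,t)}$, while on a horizontal $(v,w)$ it is the derivative of the complex $\gamma$ along the Kodaira--Spencer class $\kappa(v,w)$, which on cohomology is realised by $\cup\circ c\circ\kappa(v,w)|_\Lambda$. Projecting $K^1\twoheadrightarrow H^1(\cV_s\otimes\cE_t)=\Coker(\gamma_{(s,t)})$ kills the vertical contribution, which identifies the last map of the claimed sequence with $(v,w)\mapsto \cup\circ c\circ\kappa(v,w)|_\Lambda$. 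Conversely, a horizontal $(v,w)$ in the kernel of that map lifts to $TG^k$, since the obstruction then takes values in $\gamma(K^0)$ and can be cancelled by an appropriate $\phi$; the set of such lifts is a torsor under $\Hom(\Lambda,H^0(\cV_s\otimes\cE_t)/\Lambda)$, which under the inclusion $\Lambda\hookrightarrow H^0(\cV_s\otimes\cE_t)$ gives the first term of the exact sequence.

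For (2) I would dualise the last map via Serre duality and the trace-pairing identifications carried out just before Corollary~\ref{bvSmooth}. Since $\cup$ is dual to $\mu$ and $c$ is dual to $(\tr_V,\tr_E)$, the transpose of $\cup\circ c\circ\kappa|_\Lambda$ is precisely the restricted Petri-trace map ${^t\kappa}\circ(\tr_V,\tr_E)\circ\mu$ on $\Lambda\otimes H^0(\Kc\otimes\cE_t^*\otimes\cV_s^*)$. The Grassmannian construction gives the dimension bound (\ref{expdimBk}) on every component of $G^k(\cV,\cE)$ through $(\Lambda,s,t)$, so smoothness of the expected dimension at that point is equivalent to surjectivity of the last map of (1), equivalently by duality to injectivity of the restricted Petri-trace map. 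For (3), granted (2), $G^k(\cV,\cE)$ is smooth of the expected dimension along the whole fibre $\pi^{-1}(s,t)=\Gr(k,H^0(\cV_s\otimes\cE_t))$; since $\pi$ is projective and restricts to an isomorphism over the open locus $B^k(\cV,\cE)\setminus B^{k+1}(\cV,\cE)$ where $h^0=k$, when $h^0(\cV_s\otimes\cE_t)>k$ the proper birational morphism $\pi$ exhibits the smooth $G^k(\cV,\cE)$ as a desingularisation of $B^k(\cV,\cE)$ near $(s,t)$.

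The main obstacle is the bookkeeping in part (1): correctly identifying the differential of $\sigma$ at a zero as the sum of a vertical postcomposition by $\gamma_{(s,t)}$ and the cohomology cup-product, and then running a projection-and-lift argument to extract the four-term exact sequence. Parts (2) and (3) then follow essentially formally from the Serre-duality setup and the fact that $\pi$ is an isomorphism off $B^{k+1}(\cV,\cE)$.
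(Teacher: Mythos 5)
Your proposal is correct and follows essentially the same route as the paper, whose proof of (1)--(2) is simply ``the same way as'' \cite[Proposition IV.4.1 (ii)--(iii)]{ACGH} with (3) deduced from (2): your realisation of $G^k(\cV,\cE)$ as the zero scheme of the tautological section on $\Gr(k,K^0)$, the vertical/horizontal evaluation of $d\sigma$, the Serre-duality/trace dualisation for (2), and the proper-birational argument for (3) is exactly that standard argument. One remark: your computation (correctly, as in ACGH) identifies $\Ker\left(d\pi|_{T_{(\Lambda,s,t)}G^k(\cV,\cE)}\right)$ with $\Hom\left(\Lambda, H^0(\cV_s\otimes\cE_t)/\Lambda\right)$, which is what the first term of \eqref{GkTangSp} should be (and what the dimension count in (2) requires), so rather than forcing it to match the displayed $\Hom\left(\Lambda, H^0(\cV_s\otimes\cE_t)\right)$ via the inclusion $\Lambda\hookrightarrow H^0(\cV_s\otimes\cE_t)$, you should simply note that the quotient is the correct term.
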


\begin{proof} Statements (1) and (2) are proven in the same way as \cite[Proposition IV.4.1 (ii)-(iii), p. 187]{ACGH}, and clearly (3) follows from (2). \end{proof}

\subsection{A sufficient condition for the existence of good components} \label{stratification}

Note that 
\[S\times T\supset B^1(\cV,\cE)\supset B^2(\cV,\cE)\supset\cdots\supset B^k(\cV,\cE)\] is a stratification of $B^k(\cV,\cE)$ by closed subsets. The following proposition makes use of this stratification.

\begin{proposition} \label{OneImpliesAll} Suppose $S\times T$ is smooth at $(s,t)$ and for some $k' \ge \chi$ there exists $(s, t) \in B^{k'}(\cV, \cE)$ such that $h^0 (\cV_s \otimes \cE_t) = k'$ and ${^t\kappa} \circ (\tr_{\cV_s} , \tr_{\cE_t}) \circ \mu$ is injective. Then, for $\chi \le k \le k'$, the locus $B^k (\cV, \cE)$ contains a component which is generically smooth and of the expected dimension. \end{proposition}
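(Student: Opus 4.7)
The plan is to use the partial desingularisation $G^k(\cV, \cE)$ of \S\ref{PartialDesing} to reduce to Proposition \ref{BkSmoothV2}(2) at a suitable nearby point. By Serre duality, the hypothesis is equivalent to surjectivity of $\cup \circ c \circ \kappa \colon T_s S \oplus T_t T \to \Hom\bigl(H^0(\cV_s \otimes \cE_t), H^1(\cV_s \otimes \cE_t)\bigr)$. For any $\Lambda \in \Gr(k, H^0(\cV_s \otimes \cE_t)) \cong \Gr(k, k')$, restriction of the injective Petri trace map to $\Lambda \otimes H^0(\Kc \otimes \cE_t^* \otimes \cV_s^*)$ remains injective, so Proposition \ref{Gk}(2) gives that $G^k(\cV, \cE)$ is smooth of the expected dimension $\dim S + \dim T - k(k-\chi)$ at every $(s,t,\Lambda)$ with $\Lambda \in \pi^{-1}(s,t)$. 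Since $\pi^{-1}(s,t) = \Gr(k, k')$ is irreducible and lies in the smooth locus of $G^k$, there is a unique irreducible component $G_0$ of $G^k(\cV, \cE)$ containing it, with $\dim G_0 = \dim S + \dim T - k(k-\chi)$.

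The crucial step is to show that $G_0$ is not contained in $\pi^{-1}(B^{k+1}(\cV, \cE))$. I would argue this via a tangent space codimension count at $(s,t,\Lambda)$: a deformation $(\dot s, \dot t, \dot \Lambda) \in T_{(s,t,\Lambda)} G^k$ lies in the tangent space to $\pi^{-1}(B^{k+1})$ precisely when $\cup \circ c \circ \kappa(\dot s, \dot t) \in \Hom(H^0, H^1)$ has rank $\le k'-k-1$. Since $(\dot s, \dot t)$ preserves $\Lambda$, this map factors through $H^0/\Lambda$, and the rank condition defines a determinantal subvariety of codimension $k-\chi+1$ in $\Hom(H^0/\Lambda, H^1)$. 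The induced linear map $\Ker(\cup \circ c \circ \kappa|_\Lambda) \twoheadrightarrow \Hom(H^0/\Lambda, H^1)$ is surjective by hypothesis, so the preimage has the same codimension, and the vertical direction $\Hom(\Lambda, H^0/\Lambda)$ contributes no extra condition on $h^0$. Hence the tangent space of the bad locus has codimension $k-\chi+1 \ge 1$ in $T_{(s,t,\Lambda)} G^k$, where the assumption $k \ge \chi$ is essential. Smoothness of $G^k$ at $(s,t,\Lambda)$ then implies that $\pi^{-1}(B^{k+1}) \cap G_0$ is a proper closed subset of $G_0$ near $(s,t,\Lambda)$, so a nearby point $(s', t', \Lambda') \in G_0$ has $h^0(\cV_{s'} \otimes \cE_{t'}) = k$.

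At $(s', t', \Lambda')$ we must have $\Lambda' = H^0(\cV_{s'} \otimes \cE_{t'})$. By openness of smoothness, $G^k$ is smooth of the expected dimension at $(s', t', \Lambda')$, and Proposition \ref{Gk}(2) then gives that the full Petri trace map at $(s', t')$ is injective. Proposition \ref{BkSmoothV2}(2) yields that $B^k(\cV, \cE)$ is smooth of the expected dimension at $(s', t')$, so the irreducible component $Z_k$ of $B^k(\cV, \cE)$ through $(s', t')$ is a generically smooth component of expected dimension. Finally, $\overline{\pi(G_0)} \subseteq B^k(\cV, \cE)$ is irreducible and contains both $(s,t)$ and $(s', t')$, so $(s, t) \in Z_k$ as well. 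The main obstacle is the codimension estimate in the middle paragraph, which depends crucially on the surjectivity of $\cup \circ c \circ \kappa$ (equivalent by Serre duality to the hypothesis) and on $k \ge \chi$, without which the bad locus could fill the whole neighbourhood.
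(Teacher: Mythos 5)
Your framework is right at the beginning and the end (smoothness of $G^k(\cV,\cE)$ along the fibre over $(s,t)$ via Proposition \ref{Gk}(2), then passage to a nearby point with $h^0=k$ and an application of Proposition \ref{BkSmoothV2}), but the middle paragraph, which you yourself flag as the main obstacle, has a genuine gap. The assertion that $(\dot s,\dot t,\dot\Lambda)$ ``lies in the tangent space to $\pi^{-1}(B^{k+1}(\cV,\cE))$ precisely when $\cup\circ c\circ\kappa(\dot s,\dot t)$ has rank $\le k'-k-1$'' is not correct: that rank condition is not a linear condition, and, more seriously, at a point where $h^0(\cV_s\otimes\cE_t)=k'\ge k+2$ the Zariski tangent space to $B^{k+1}(\cV,\cE)$ is \emph{all} of $T_sS\oplus T_tT$, because the differentials of the minors defining the determinantal scheme vanish identically wherever the corank of $\gamma$ exceeds $k+1$ (this is precisely why Proposition \ref{BkSmoothV1}(1) is stated only under the hypothesis $h^0=k$). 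Hence a tangent-space codimension count gives no bound on the bad locus. What your computation is really about is the tangent \emph{cone}: there is a standard inclusion of the tangent cone of a degeneracy locus at a special point into the locus of maps from the kernel to the cokernel satisfying the corresponding rank bound, and combined with the fact that the local dimension of a variety equals the dimension of its tangent cone, your codimension-$(k-\chi+1)$ count could be salvaged. But that degeneracy-locus lemma is a genuine additional ingredient which you neither prove nor cite, and the ``precisely when'' direction is false in general (and not needed).

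The paper avoids this issue entirely by a descending induction on $k$, which you may prefer to adopt: at each step one has a point $(V,E)$ with $h^0(V\otimes E)=k+1$ \emph{exactly} and with injective Petri trace map, so $B^{k+1}(\cV,\cE)$ is smooth of its expected dimension near $(V,E)$ and $h^0\equiv k+1$ there; consequently $\pi^{-1}(B^{k+1}(\cV,\cE))$ is locally a $\Gr(k,k+1)$-bundle of known dimension, and comparing with the lower bound \eqref{expdimBk} for every component of $G^k(\cV,\cE)$ yields a dimension surplus of $k+1-\chi\ge 1$. A nearby point of $G^k(\cV,\cE)$ then has $h^0=k$, and by semicontinuity the Petri trace map there is still injective, so Proposition \ref{BkSmoothV2} applies as in your final paragraph. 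In short: either prove (or cite) the tangent-cone lemma for degeneracy loci to repair your one-step argument at $(s,t)$, or replace it by the inductive step, where only an elementary dimension count is needed.
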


\begin{proof} We prove this by descending induction on $k$. For $k=k'$, the result follows immediately from Proposition \ref{BkSmoothV1}. Now suppose $\chi\le k<k'$ and that the proposition holds for $B^{k+1}(\cV,\cE)$. Then, there exists $(V,E)\in B^{k+1}(\cV,\cE)$ with $h^0(V\otimes E)=k+1$ and ${^t\kappa} \circ (\tr_{\cV_s} , \tr_{\cE_t}) \circ \mu$ injective. Now let $\Lambda$ be any $k$-dimensional linear subspace of $H^0(V\otimes E)$. Then, by Proposition \ref{Gk}, $G^k(\cV,\cE)$ is smooth of the expected dimension at $(V,E,\Lambda)$. Since $k\ge\chi$, it follows from \eqref{expdimBk} that every component of $G^k(\cV,\cE)$ has dimension greater than the dimension of $\pi^{-1}(B^{k+1}(\cV,\cE))$ at $(V,E)$. Hence, there exists a point $(V_1,E_1,\Lambda_1)$ of $G^k(\cV,\cE)$ in the neighbourhood of $(V,E,\Lambda)$ with $h^0(V_1\otimes E_1)=k$ and  ${^t\kappa} \circ (\tr_{V_1} , \tr_{E_1}) \circ \mu$ injective. Thus $B^k(\cV,\cE)$ is smooth of the expected dimension at $(V_1,E_1)$.
\end{proof}

This proposition illustrates a general principle that, from the existence of just one pair of bundles with good properties, one can obtain a detailed picture of the geometry of several of the strata. This will be used on a number of occasions later.

\subsection{The loci $\tB^k_{n,e}(V)$} \label{semistable} We end this section with a brief discussion of twisted Brill--Noether loci where strictly semistable bundles are admitted.

The space $U(r,d)$ is an open subset of the moduli space $\tU(r,d)$ of S-equivalence classes of semistable bundles of rank $r$ and degree $d$. We write $[E]$ for the S-equivalence class of a semistable $E$ and $\operatorname{gr}E$ for the graded bundle associated to $E$; $\operatorname{gr}E$ depends only on $[E]$. The definition of $\bv$ is extended to include semistable bundles by setting
\[ \tB^k_{n, e} (V) \ := \ \{ [E] \in \tU(n, e) :  h^0 ( C, V \otimes \operatorname{gr}E ) \ge k \} . \]
%If $V = \Oc$ then this is denoted $\tB^k_{n,e}$.
%Note, however, that $h^0 ( V \otimes E )$ depends on $E$ and not only on $[E]$. For example, suppose $L_1$ and $L_2$ are line bundles of degree $e$ such that $h^0 (V \otimes L_1) = 0$ and $h^0 (V \otimes L_2) = 1$. Assume furthermore that $h^1 (V \otimes L_1) \ne 0$. Consider extensions $0 \to L_1 \to E \to L_2 \to 0$. Then $h^0 (V \otimes E)$ depends on the extension class of $E$ in $H^1 ( \Hom(L_2, L_1))$ and not only on $[E] = [L_1 \oplus L_2 ]$. %Take $V = \Oc$ for a concrete example.

Furthermore; it was noted in \cite[\S 4]{GTiB} that, for $r \ge 2$, the locus $B^k_{r,0}$ is empty, but $\tB^k_{r,0}\ne\emptyset$ for $k\le r$.
%Set $V = \Oc$. Then $E \in \tU(n, 0)$ satisfies $h^0 (C, V \otimes E) \ge 1$ only if $E$ has a trivial line subbundle, so in particular $E$ is strictly semistable.
 We can generalise this example to twisted Brill--Noether loci. Suppose $V \in U(r,d)$ is a stable bundle, $n>r$ and $re+nd=0$. Then, according to Proposition \ref{prop23} (2), $B^k_{n,e}(V)=\emptyset$ for all $k$. However, taking $E=V^*\oplus F$, where $F$ is semistable and $\mu(F) = \mu(E)$, we see that $[E]\in\tB^1_{n,e}(V)$.

\section{Two irreducibility results} \label{irred}

Here we will give some applications of the machinery assembled in the previous section.

\subsection{Rank one twisted Brill--Noether loci} If $C$ is a Petri curve, $B^k_{1,e} = W_e^{k-1}(C)$ is irreducible whenever $\rho^k_{1,e} = g - k(k - e + g - 1) \ge 1$. %Hirschowitz \cite{Hir2} generalised this statement to higher rank, proving the irreducibility of $B^k_{1, e}(V)$ for general $V$ and $C$.
 %Here we generalise this result to families of Petri general vector bundles of rank $r \ge 1$. 
%The proof of irreducibility of $B^k_{1,e}$ in \cite{ACGH} generalises easily to higher rank, giving another example of the similarity between the behaviour of general vector bundles and line bundles. Note however that the generality of Petri general bundles in moduli is in fact a very nontrivial consequence of \cite{TiB14}.

%We denote by $\Pic^e$ the Jacobian variety parametrising line bundles of degree $e$ over $C$.
Let $\cP \to \Pic^e (C) \times C$ be a Poincar\'e bundle.

\begin{theorem} \label{IrrPGfamilies} Let $\cV \to S \times C$ be a family of Petri general vector bundles of rank $r$ and degree $d$ parametrised by a smooth irreducible base $S$. Assume that
\[ \rho^k_{1, e, r, d} \ = \ g - k(k - (d+re) + r(g-1)) \ \ge \ 1 . \]
Then the locus $B^k (\cV , \cP ) \subseteq S \times \Pic^e (C)$ is an irreducible variety of dimension $\dim S + \rho^k_{1, e, r, d}$ which is singular precisely along $B^{k+1} (\cV, \cP)$. \end{theorem}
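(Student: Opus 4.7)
I will prove the theorem by studying the partial desingularisation $G := G^k(\cV, \cP) \subset \Gr(k, K^0)$ introduced in \S\ref{PartialDesing} and its projection $\pi: G \to S \times \Pic^e(C)$ onto $B^k(\cV, \cP)$. The three claims of the theorem (irreducibility, dimension, singular locus) will all follow once I establish that $G$ is smooth of pure dimension $\dim S + \rho^k_{1,e,r,d}$ and connected.

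\textbf{Smoothness of $G$.} At any point $(s, L, \Lambda) \in G$, Proposition \ref{Gk}(2) reduces smoothness of $G$ at that point to injectivity of
\[ {^t\kappa} \circ (\tr_{\cV_s}, \tr_L) \circ \mu : \Lambda \otimes H^0(\Kc \otimes L^{-1} \otimes \cV_s^*) \to H^0(\Kc \otimes \End \cV_s) \oplus H^0(\Kc), \]
where I have used $\End L = \Oc$ to identify $H^0(\Kc \otimes \End L) = H^0(\Kc)$, and the second summand is further identified with $T^*_L \Pic^e(C)$ via the Kodaira--Spencer isomorphism of $\cP$. The projection onto this second summand is precisely the restriction to $\Lambda \otimes H^0(\Kc \otimes L^{-1} \otimes \cV_s^*)$ of the Petri trace map $\mu_L = \tr_L \circ \mu$, which is injective because $\cV_s$ is assumed Petri general. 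Hence the whole map is injective, and by Proposition \ref{Gk}(2), $G$ is smooth of dimension $\dim S + \rho^k_{1,e,r,d}$ at every point.

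\textbf{Connectedness of $G$.} Let $p: G \to S$ be the projection obtained by composing $\pi$ with $S \times \Pic^e(C) \to S$. Since $G$ is closed in the Grassmannian bundle $\Gr(k, K^0)$ over $S \times \Pic^e(C)$ and $\Pic^e(C)$ is projective, $p$ is proper. Over $s \in S$, the fibre $p^{-1}(s) = G^k(\cV_s, \cP)$ maps surjectively onto $B^k_{1, e}(\cV_s) \subseteq \Pic^e(C)$ with connected Grassmannian fibres, and since $\rho^k_{1, e}(\cV_s) = \rho^k_{1, e, r, d} \ge 1$, Theorem \ref{LTiB}(2) gives that $B^k_{1, e}(\cV_s)$ is connected, so $G^k(\cV_s, \cP)$ is connected. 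A proper surjection with connected fibres onto the irreducible (hence connected) base $S$ has connected total space, so $G$ is connected.

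\textbf{Conclusion and principal obstacle.} A smooth connected variety is irreducible, so $G$ is irreducible of dimension $\dim S + \rho^k_{1, e, r, d}$, and therefore $B^k(\cV, \cP) = \pi(G)$ is irreducible. Its dimension matches that of $G$ since $\pi$ is dimension non-increasing and each component of $B^k(\cV, \cP)$ satisfies the determinantal lower bound \eqref{expdimBk}. On the open locus $B^k(\cV, \cP) \setminus B^{k+1}(\cV, \cP)$ one has $h^0(\cV_s \otimes L) = k$, so the fibres of $\pi$ there are points and $\pi$ restricts to a scheme-theoretic isomorphism, carrying smoothness of $G$ to smoothness of $B^k(\cV, \cP)$; conversely $B^{k+1}(\cV, \cP) \subseteq \Sing B^k(\cV, \cP)$ is the general determinantal fact recorded in \S\ref{BNfamily}. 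The smoothness step is essentially formal from Petri generality and the framework of \S\ref{preliminaries}; the substantive content lies in connectedness, and the key simplification is that it factors cleanly through the fibrewise application of Theorem \ref{LTiB}(2), bypassing any direct Fulton--Lazarsfeld-style positivity analysis of the pushforwards $K^0, K^1$ over the (possibly non-projective) base $S \times \Pic^e(C)$.
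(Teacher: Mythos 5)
Your proposal is correct and follows essentially the same route as the paper: smoothness of $G^k(\cV,\cP)$ from Petri generality via Proposition \ref{Gk}(2), connectedness from the fibration over the irreducible base $S$ using Theorem \ref{LTiB}, irreducibility of $B^k(\cV,\cP)$ as the image of $G^k(\cV,\cP)$, and the singular-locus claim from Petri generality plus the determinantal containment $B^{k+1}(\cV,\cP)\subseteq\Sing\left(B^k(\cV,\cP)\right)$. The only cosmetic differences are that the paper first proves $B^k(\cV,\cP)$ connected (citing Theorem \ref{LTiB}(1) for nonemptiness of the fibres, which your ``proper surjection'' onto $S$ implicitly needs) and then lifts connectedness to $G^k(\cV,\cP)$ via the Grassmannian fibres, and that it obtains smoothness of $B^k(\cV,\cP)$ away from $B^{k+1}(\cV,\cP)$ directly from Proposition \ref{BkSmoothV2} rather than through the isomorphism with $G^k(\cV,\cP)$ over that locus.
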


\begin{proof} The fibre of $B^k ( \cV, \cP )$ over each $s \in S$ is exactly $B^k_{1, e} (\cV_s)$. Since $\rho^k_{1,e,r,d} \ge 1$, by Theorem \ref{LTiB} (1) and (2) this fibre is nonempty and connected. As $S$ is irreducible, it follows that $B^k ( \cV, \cP )$ is connected. As the fibres of $G^k ( \cV, \cP ) \to B^k ( \cV, \cP)$ are Grassmannians, $G^k ( \cV , \cP)$ is also connected.

Furthermore, as $\cV_s$ is Petri general for all $s$, by Proposition \ref{Gk}(2) in fact $G^k ( \cV, \cP )$ is smooth. Therefore $G^k (\cV, \cP)$ is irreducible. As $B^k (\cV, \cP )$ is the image of $G^k ( \cV, \cP)$ by a morphism, $B^k ( \cV, \cP )$ is also irreducible.

The last statement follows from Petri generality and Proposition \ref{BkSmoothV2}. \end{proof}

\begin{remark} Suppose that $C$ is a general curve. Since, by Theorem \ref{TiB}, a general bundle $V$ in $U(r, d)$ is Petri general, in particular $B^k_{1, e}(V)$ is irreducible for general $V$. Thus we recover \cite[Th\'eor\`eme 1.2]{Hir2}. \end{remark}

\subsection{Irreducibility of Petri trace injective loci} Here we give an application to ``nontwisted'' Brill--Noether loci.

\begin{theorem} \label{PTIIrrComp} Suppose $C$ is general and $g \ge \rho^k_{1, 0, r, d} = g - k(k-d+r(g-1)) \ge 1$. Then there is a unique irreducible component $\Bkg$ of the Brill--Noether locus $B^k_{r, d}$ containing the locus
\begin{equation} \{ V \in B^k_{r, d} : \bar{\mu} \colon \Lambda \otimes H^0 ( \Kc \otimes V^* ) \to H^0 ( \Kc ) \hbox{ is injective for some } \Lambda \in \Gr (k, H^0 ( V ) ) \} \label{BkrdPTI} \end{equation}
In particular, the locus of Petri trace injective bundles in $B^k_{r,d}$ is irreducible. \end{theorem}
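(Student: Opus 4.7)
The plan rests on three ingredients: a splitting of the Petri map induced by the trace, the irreducibility statement Theorem \ref{IrrPGfamilies} applied to Petri general bundles, and a local deformation argument.

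First, since $\tr : \End V \to \Oc$ is split canonically by $\lambda \mapsto \lambda \cdot \Iden_V$, we have $\End V \cong \Oc \oplus \mathrm{End}_0 V$ and $H^0(\Kc \otimes \End V) \cong H^0(\Kc) \oplus H^0(\Kc \otimes \mathrm{End}_0 V)$. Under this splitting the Petri map reads $\mu = (\bar\mu, \mu_0)$, so injectivity of $\bar\mu|_{\Lambda \otimes H^0(\Kc \otimes V^*)}$ forces injectivity of the full Petri map $\mu|_\Lambda$. Applying Proposition \ref{Gk}(2) with $\cV$ the universal family over $\tUrd$ and $\cE = \Oc$, the open subset $\widetilde{\Sigma} := \{(V, \Lambda) \in G^k(\cV, \Oc) : \bar\mu|_\Lambda \text{ injective}\}$ consists of smooth points of $G^k(\cV, \Oc)$ of the expected dimension, and the locus \eqref{BkrdPTI} is the image of $\widetilde{\Sigma}$ under projection to $\tUrd$, hence consists of smooth points of $B^k_{r,d}$ of the expected dimension $(r^2-1)(g-1) + \rho^k_{1,0,r,d}$.

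Second, I would produce the candidate for $\Bkg$. Let $\tUrd_{\mathrm{PG}} \subseteq \tUrd$ denote the Petri general open subset (dense by Theorem \ref{TiB}) and $\cV_{\mathrm{PG}}$ the restriction of the universal family. Since the Petri general condition is invariant under tensoring with $\Pic^0(C)$, the correspondence $(V, L, \Lambda) \leftrightarrow (L, V \otimes L, \Lambda)$ gives an isomorphism $G^k(\cV_{\mathrm{PG}}, \cP) \cong \Pic^0(C) \times G^k(\cV_{\mathrm{PG}}, \Oc)$. Theorem \ref{IrrPGfamilies} gives irreducibility of the left-hand side, whence $G^k(\cV_{\mathrm{PG}}, \Oc)$ is irreducible; its image is the irreducible, nonempty open subset $B^k_{r,d} \cap \tUrd_{\mathrm{PG}}$, whose closure $\Bkg$ in $B^k_{r,d}$ is the required component.

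Third, to show that \eqref{BkrdPTI} $\subseteq \Bkg$, pick $(V, \Lambda) \in \widetilde{\Sigma}$. Smoothness of $G^k(\cV, \Oc)$ of the expected dimension at $(V, \Lambda)$ places this point on a unique local irreducible component $\mathcal{G}$ of $G^k(\cV, \Oc)$. In a neighbourhood of $(V, \Lambda)$ the projection to $\tUrd$ surjects onto the local smooth model of $B^k_{r,d}$ at $V$; by density of $\tUrd_{\mathrm{PG}}$ in $\tUrd$, generic bundles in this model are Petri general, so $\mathcal{G}$ contains points of $G^k(\cV_{\mathrm{PG}}, \Oc)$ and must coincide with $\overline{G^k(\cV_{\mathrm{PG}}, \Oc)}$. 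Thus $V \in \Bkg$. The final assertion follows because the Petri trace injective locus is a nonempty open subset of the irreducible variety $\Bkg$. The main obstacle I anticipate is this density step: rigorously establishing that, for a PTI-but-non-Petri-general bundle $V$, the local model of $B^k_{r,d}$ is not entirely contained in the closed non-Petri-general locus of $\tUrd$. I expect this to follow from expected-dimension smoothness at $V$ together with the positive codimension of the non-Petri-general locus, but the required transversality argument warrants care.
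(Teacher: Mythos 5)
Your first two paragraphs are essentially sound (modulo small slips: points of \eqref{BkrdPTI} with $h^0(V)>k$ are smooth points of $G^k(\cV,\Oc)$ but need not be smooth points of $B^k_{r,d}$; and the identification $G^k(\cV_{\PG},\cP)\cong \Pic^0(C)\times G^k(\cV_{\PG},\Oc)$ needs the cover carrying the Poincar\'e family to be chosen compatibly with tensoring, which is why the paper works with the irreducible parameter space $\cM$ of \cite[Proposition 2.6]{NR75}). The real problem is the step you yourself flag, and it is a genuine gap rather than a routine transversality check: you must show that the unique component of $G^k(\cV,\Oc)$ through an arbitrary point $(V,\Lambda)$ of $\widetilde{\Sigma}$ contains Petri general bundles. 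A dimension count cannot give this. That component has dimension $\dim \tUrd - k(k-\chi)$, i.e.\ its image in $\tUrd$ has codimension up to $k(k-\chi)$, while all that is known about the non--Petri-general locus is that it is a proper closed subset (its codimension could be $1$); so ``expected-dimension smoothness at $V$ plus positive codimension of the non-PG locus'' does not exclude that the whole local model of $B^k_{r,d}$ at $V$ sits inside the non-PG locus. In fact the assertion you need is essentially equivalent to the uniqueness statement being proved, so it cannot be taken as an input.

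The paper's proof supplies exactly the missing mechanism, and it is worth seeing why it works where your set-up cannot. Instead of $G^k(\cV,\Oc)$, one works with $G^k(\cV,\cP)$ over $\cM\times\Pic^0(C)$ and with its smooth locus $\cG^k$ (triples $(\tV,L,\Lambda)$ with $\mu_L|_{\Lambda\otimes H^0(\Kc\otimes L^{-1}\otimes V^*)}$ injective); under your product identification this is $\Pic^0(C)\times\widetilde{\Sigma}$, so nothing is lost, but now every component $X$ of $\cG^k$ has dimension $\dim\cM+\rho^k_{1,0,r,d}$ while its fibres over $\cM$ have dimension at most $\rho^k_{1,0,r,d}$ (Proposition \ref{GkVE}(2) applied fibrewise), whence $X$ \emph{dominates} $\cM$ --- this is precisely the conclusion your dimension count cannot reach without the $\Pic^0(C)$ factor. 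Once every component dominates $\cM$, a general point of $\cM$ is Petri general by Theorem \ref{TiB}, its fibre $G^k(\tV,\cP)$ is irreducible by the argument of Theorem \ref{IrrPGfamilies} (here Theorem \ref{LTiB}(1),(2), valid for \emph{all} bundles, is also used), and smoothness of $\cG^k$ forces any two components to coincide. So the key idea your proposal is missing is the twist by $\Pic^0(C)$: it converts the problem of linking a PTI bundle to the Petri general locus into the statement that every component of $\cG^k$ surjects onto the moduli space, which does follow from a dimension count. The fact that every component of $\widetilde{\Sigma}$ contains Petri general points is then a consequence of the theorem, not an ingredient of its proof.
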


\begin{proof}
By \cite[Proposition 2.6]{NR75}, there exists a smooth irreducible variety $\cM$ admitting a Poincar\'e family $\cV \to \cM \times C$ such that every stable bundle of rank $r$ and degree $d$ over $C$ is represented in $\cM$. Throughout, we will write $\tV$ for a point of $\cM$ lying over $V \in U (r, d)$.

Set $e = 0$ and let $\cP \to \Pic^0 (C) \times C$ be a Poincar\'e bundle. We consider the locus $G^k ( \cV, \cP ) \to \cM \times \Pic^0 (C)$ defined in \S \ref{PartialDesing}. Define
\[ \cG^k \ := \ \{ (\tV, L, \Lambda) \in G^k ( \cV, \cP ) : \mu_L \colon \Lambda \otimes H^0 ( \Kc \otimes L^{-1} \otimes V^* ) \to H^0 ( \Kc ) \hbox{ is injective} \} . \]
By Theorem \ref{LTiB} (3), (4) and by hypothesis, this is non-empty. By Proposition \ref{GkVE} (2), it is precisely the smooth locus of $G^k ( \cV, \cP )$.

Let $t \colon G^k ( \cV, \cP ) \to U (r, d)$ be the morphism given by $t(\tV, L, \Lambda) = V \otimes L$. By definition, $t ( \cG^k )$ is the locus (\ref{BkrdPTI}). By Brill--Noether theory, $t(\cG^k)$ has the expected dimension, so is a union of components of $B^k_{r,d}$. Thus it will suffice to show that $\cG^k$ is irreducible.

Let $p$ be the projection map $G^k ( \cV, \cP ) \to \cM \times \Pic^0 (C) \to \cM$. By Theorem \ref{LTiB} (1), this is surjective. We claim that there is a unique irreducible component of $\cG^k$ which dominates $\cM$. Suppose that $X_1$ and $X_2$ are components of $\cG^k$ such that $p(X_1)$ and $p (X_2)$ are both dense in $\cM$. %Since $\cG^k$ is smooth, $X_1 \cap X_2$ is empty. 
 Let $\tV$ be a general point of $p(X_1) \cap p(X_2)$. The fibre $p^{-1} (\tV)$ is identified with
\begin{equation} G^k ( \tV, \cP ) \ = \ \{ ( L, \Lambda ) : \Lambda \in \Gr ( k , H^0 ( \tV \otimes L )) \} , \label{GkV} \end{equation}
here viewing $\tV$ as a singleton family. By hypothesis and by Theorem \ref{TiB}, we may assume $V$ is Petri general. Thus, by the proof of Theorem \ref{IrrPGfamilies}, the locus $G^k ( \tV, \cP )$ is irreducible. Hence by semicontinuity of fibre dimension, $p^{-1} (\tV)$ is generically contained in both $X_1$ and $X_2$. In particular, $X_1 \cap X_2$ is nonempty. Since $\cG^k$ is smooth, the only possibility is that $X_1 = X_2$.

Therefore, to conclude, it will suffice to show that the restriction of $p$ to \emph{any} component $X$ of $\cG^k$ is dominant. To see this: Let $(\tV, L, \Lambda)$ be a point of $X$. %As we have seen, $p^{-1} (p (\tV, L, \Lambda))$ is identified with $G^k(V, \cP)$.
 By Proposition \ref{GkVE} (2) applied to the locus (\ref{GkV}), we have
\[ \dim_{(\tV, L, \Lambda)} (p|_X)^{-1} \left( p ( \tV, L, \Lambda ) \right) \ \le \ \rho^k_{1, 0, r, d} \ = \ g - k ( k - d + r(g-1) ) . \]
On the other hand, again by Proposition \ref{GkVE} (2), since $X$ is smooth, we have
\[ \dim_{(\tV, L, \Lambda)} X \ = \ \dim \cM + g - k ( k - d + r(g-1) ) \]
Thus $\dim ( p(X) ) \ge \dim ( \cM )$. As $\cM$ is irreducible, $p(X)$ is dense in $\cM$. This completes the proof. \end{proof}

\section{Nonemptiness of $\bv$ and $\tB^k_{n,e}(V)$} \label{chapternonemptiness}

We now prove Theorem \ref{nonemptiness} using the method of \cite{Mer}. This is very straightforward; the necessary ingredients already exist by Theorem \ref{LTiB} and the results in \cite{Mer} on stability of elementary transformations. We note that Mercat's construction was used in a similar way in \cite[\S 6]{BBPN} to show the nonemptiness of certain moduli spaces of coherent systems.

%\begin{theorem} \label{nonemptiness} Let $C$ be any curve and $V$ any vector bundle of rank $r$ and degree $d$ over $C$. Let $e_0$ and $k_0$ be integers satisfying $\rho^{k_0}_{1, e_0}(V) \ge 1$. Then for all $n \ge 2$, for all $e \ge ne_0 + 1$ (resp., $e \ge ne_0$) and for $1 \le k \le nk_0$, the twisted Brill--Noether locus $B^{nk_0}_{n,e}(V)$ (resp., $\tB^{nk_0}_{n,e}$) is nonempty. \end{theorem}

\begin{proof}[Proof of Theorem \ref{nonemptiness}] By hypothesis, $B^{k_0}_{1, e_0}(V)$ is of positive dimension. Thus we can find mutually nonisomorphic line bundles $L_1 , \ldots , L_n$ of degree $e_0$ such that $h^0 (V \otimes L_i ) \ge k_0$ for each $i$. Let
\[ 0 \to \bigoplus_{i=1}^n L_i \to E \to \tau \to 0 \]
be a general elementary transformation. We have the cohomology sequence
\[ 0 \to \bigoplus_{i=1}^n H^0 ( V \otimes L_i ) \to H^0 (V \otimes E) \to \cdots \]
so $h^0 (V \otimes E) \ge nk_0$. If $\deg \tau \ge 1$, then it follows easily from \cite[Th\'eor\`eme A.5]{Mer} that $E$ is a stable vector bundle, and the result follows for $e > ne_0$. If $\tau = 0$ then $E = \oplus_{i=1}^n L_i$ gives an element of $\tB^{nk_0}_{n, ne_0}$, and the result follows also in the case $e = ne_0$. \end{proof}

\noindent In the next sections, we will refine this statement in some cases.

\section{Generatedness of Petri general bundles} \label{genPTI}

To prove the existence of components of $B^k_{n,e} (V)$ which are generically smooth and of the expected dimension, the need will emerge to show the existence of bundles $W$ of rank $r \ge 2$ and degree $d$ satisfying the following conditions:
\begin{enumerate}
\item[(1)] $W$ is Petri trace injective. Equivalently, $\Kc \otimes W^*$ is Petri trace injective.
\item[(2)] $h^0 (W) = k \ge 1$.
%, where $k$ is as large as possible.
\item[(3)] $\Kc \otimes W^*$ is generically generated.
\end{enumerate}

\begin{remark} \label{PTIbounds} Note that the above conditions give strong bounds on $d$ and $k$. By (1) and (2), we have $d \ge r(g-1) + 1 - g  = (r-1)(g-1)$. %(see Remark \ref{nontwisted}(3)). 
%\[ g - k ( k - d + r(g-1)) \ge 1 \]
%\[ g - 1 - k ( k - d + r(g-1)) \ge 0 \]
%\[ \frac{g - 1}{k} - k + d - r(g-1) \ge 0 \]
%\[ d \ge r(g-1) + k - \frac{g - 1}{k} \]
%Thus $2r(g-1) - d \le 2r(g-1) - r(g-1) - k + \frac{g-1}{k}$.
 Moreover, (3) implies that $h^0 (\Kc \otimes W^*) = h^1 (W) = k - \chi(W) \ge r$, whence by (1) we see that $\deg(K_C\otimes W^*) \ge r(g-1) + r - \frac{g}{r}$, so $d \le r(g-2) + \frac{g}{r}$. In summary,
\begin{equation} (r-1)(g-1)\ \le \ d \ \le \ r(g-2) + \frac{g}{r} . \label{NecessaryForPTIgengen} \end{equation}
%and
%\[ r(g-1) + r - \frac{g-1}{r} \ \le \ \hd \ \le \ rg - r + g - 2 . \]
%\[ rg - \frac{g-1}{r} \ \le \ \hd \ \le \ rg - r + g - 2 . \]
Values of $d$ satisfying \eqref{NecessaryForPTIgengen} exist if and only if $r\le g$.
\end{remark}

\subsection{The construction}
Write $g = rl+r_0$ where $l, r_0$ are integers with $0 \le r_0 < r$. Let $D_0$ be an effective divisor of degree $r_0$ such that $h^0(K_C(-D_0))=h^0(K_C)-r_0=rl$.  (If $r_0 = 0$, take $D_0 = 0$.) Set $N := \Kc(-D_0)$ and let $D_1, \ldots , D_r$ be distinct effective divisors of degree $l$ such that
\begin{equation} h^0\left(N\left(-\sum D_j\right)\right)=h^0(N)-rl=0. \label{DiSpan} \end{equation}
For $1 \le i \le r$, set $M_i \ := \ N \left( - \sum_{j \ne i} D_j \right)$. By (\ref{DiSpan}), we have $h^0 (M_i ) = h^0 (N) - (r-1)l = l$ for each $i$.

\begin{lemma} \ 
\begin{enumerate} 
\renewcommand{\labelenumi}{(\arabic{enumi})}
\item $H^0 (N) \cong \bigoplus_{i=1}^r H^0 (M_i)$.
\item $H^1 ( M_i ) \cong H^1 (N) = \K$.
\item The bundles $M_1 , \ldots , M_r$ are mutually nonisomorphic.
\end{enumerate} \label{SectionsOfMi} \end{lemma}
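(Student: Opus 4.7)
The plan is to treat (1), (2), (3) in turn, using only the construction of $N$ and the $M_i$, the vanishing \eqref{DiSpan}, Riemann--Roch, and a small genericity argument for (3).

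For (1), consider the natural map $\bigoplus_{i=1}^r H^0(M_i) \to H^0(N)$ arising from the inclusions $M_i \hookrightarrow N$. Both sides have dimension $rl$, so it suffices to show injectivity. If $\sum_i s_i = 0$ with $s_i \in H^0(M_i)$, then for any fixed $i$, every $s_j$ with $j \ne i$ vanishes along $D_i$ (since $s_j$ vanishes along $\sum_{k \ne j} D_k$, which contains $D_i$), so $s_i = -\sum_{j\ne i} s_j$ also vanishes along $D_i$. Combined with the fact that $s_i \in H^0(M_i)$ already vanishes along $\sum_{j\ne i} D_j$, this gives $s_i \in H^0(N(-\sum_j D_j)) = 0$ by \eqref{DiSpan}.

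For (2), Riemann--Roch combined with the known $h^0$-values yields $h^1(N) = 1$ and $h^1(M_i) = 1$ (using $\deg N = 2g-2-r_0$ and $\deg M_i = g+l-2$). For the isomorphism, apply cohomology to $0 \to M_i \to N \to N/M_i \to 0$; the quotient $N/M_i$ is a torsion sheaf, so $H^1(N/M_i) = 0$ and the induced map $H^1(M_i) \to H^1(N)$ is surjective, hence an isomorphism by the dimension count.

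For (3), note that $M_i \cong M_j$ if and only if $\Oc(D_i) \cong \Oc(D_j)$. Condition \eqref{DiSpan} alone does not force nonisomorphism: for instance, if one took $D_1 = \cdots = D_r$ equal to a fixed divisor for which \eqref{DiSpan} still held, all the $M_i$ would coincide. Therefore a small genericity refinement of the choice is required, and this is the main obstacle. Now \eqref{DiSpan} is an open condition on $(D_1, \ldots, D_r) \in (\Sym^l(C))^r$ which is non-empty by assumption; meanwhile, since $l \le g/r < g$ (using $r \ge 2$), the Abel--Jacobi map $\Sym^l(C) \to \Pic^l(C)$ is generically injective, so the locus where $\Oc(D_i) \cong \Oc(D_j)$ for some $i \ne j$ is a proper closed subset. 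Thus we may choose $(D_1, \ldots, D_r)$ in the non-empty intersection, making the $M_i$ mutually nonisomorphic.
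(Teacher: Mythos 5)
Your proofs of (1) and (2) follow essentially the paper's route: (1) is the same vanishing argument via \eqref{DiSpan} (the paper records the directness of the sum as ``easy to see'', with the same tacit assumption you also make, namely that vanishing along $D_i$ and along $\sum_{j \ne i} D_j$ combine to vanishing along $\sum_j D_j$, i.e.\ the $D_j$ are chosen without common points), and (2) is the paper's Riemann--Roch count, which you usefully sharpen by observing that the quotient $N/M_i$ is torsion, so the \emph{natural} map $H^1(M_i) \to H^1(N)$ is an isomorphism --- this naturality is what is actually used later. For (3) you take a genuinely different route. The paper does not re-choose the divisors generically: assuming (implicitly) only that $D_1, \ldots, D_r$ are pairwise distinct divisors, it argues that $M_{i_1} \cong M_{i_2}$ forces $\Oc(D_{i_1}) \cong \Oc(D_{i_2})$, hence $h^0(\Oc(D_{i_1})) \ge 2$ because the two divisors are distinct members of one linear system; Riemann--Roch then gives $h^0(M_{i_2}) \ge g - l + 1 - r_0 - (r-2)l = l+1$, contradicting $h^0(M_{i_2}) = l$, which \eqref{DiSpan} already forces. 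So the ``genericity refinement'' you single out as the main obstacle reduces to the single requirement that the $D_i$ be pairwise distinct; your point that \eqref{DiSpan} alone does not suffice is fair, since the paper's proof silently uses $D_{i_1} \ne D_{i_2}$. Your alternative --- choosing $(D_1, \ldots, D_r)$ general in $(\Sym^l(C))^r$ and invoking generic injectivity of the Abel--Jacobi map $\Sym^l(C) \to \Pic^l(C)$ for $l < g$ --- is correct and self-contained, but heavier: it imports an extra fact and a dimension count, whereas the paper's argument extracts nonisomorphism directly from the numerology already established ($h^0(M_i) = l$) and in return pinpoints the minimal hypothesis on the choice of divisors.
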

\begin{proof} (1) 
We have inclusions $H^0 ( M_i ) \hookrightarrow H^0 ( N )$ for all $i$. It is easy to see that
\[H^0(M_i)\cap \left( H^0(M_1)+\cdots+H^0(M_{i-1})+H^0(M_{i+1})+\cdots + H^0(M_r) \right)=0.\]
So $\oplus_{i=1}^rH^0(M_i)\subseteq H^0(N)$. Since the dimensions agree, we obtain (1).

(2) Calculating values for degrees and $h^0$, this follows from Riemann-Roch.

\begin{comment}
Firstly, consider the cohomology sequence
\[ 0 \to H^0 ( N ) \to H^0 ( \Kc ) \to \Kc|_{D_0} \to H^1 (N) \to H^1 (\Kc) \to 0 . \]
As the span of $D_0$ is nondefective in $|\Kc|^*$, the evaluation map $H^0 ( \Kc ) \to \Kc|_{D_0}$ is surjective. Hence $H^1 (N) \cong H^1 (\Kc) = \K$.

Next, for each $i$, we have an exact sequence
\[ 0 \to H^0 ( M_i ) \to H^0 (N) \to N|_{\sum_{j \ne i} D_j} \to H^1 ( M_i ) \to H^1 (N) \to 0 . \]
By (\ref{DiSpan}), the evaluation map $H^0 (N) \to N|_{\sum_{j \ne i} D_j}$ is surjective, whence $H^1 ( M_i ) \cong H^1 (N) = \K$ for each $i$. \end{comment}

(3) Suppose $M_{i_1} \cong M_{i_2}$; that is,
\[ N \left( - \sum_{j \ne i_1} D_j \right) \ \cong \ N \left( - \sum_{j \ne i_2} D_j \right) . \]
Tensoring both sides with $N^{-1} \left( \sum_{j=1}^r D_j \right)$, we obtain $\Oc ( D_{i_1} ) \cong \Oc (D_{i_2})$. As $D_{i_1} \ne D_{i_2}$ as divisors, in particular $h^0 ( \Oc (D_{i_1})) \ge 2$. But then $h^0(K_C(-D_{i_1}))\ge g-l+1$ and $h^0(M_{i_2})\ge g-l+1-r_0-(r-2)l=l+1$, a contradiction. This proves (3).
% the geometric Riemann--Roch theorem would imply that the secant spanned by $D_{i_1}$ in $|\Kc|^*$ were defective. As $H^0 (N)^*$ is a quotient of $H^0 (\Kc)^*$, this is impossible due to (\ref{DiSpan}).
\end{proof}

Now write $G := \bigoplus_{i=1}^r M_i$. Consider elementary transformations
\begin{equation} 0 \to W \to G \to T \to 0 \label{WT} \end{equation}
where $T$ is a torsion sheaf of degree $(r-1)l + m$ with $0 \le m \le l-1$. To ease notation, we write $t := (r-1)l + m$. The set of such $W$ is parametrised by the Quot scheme $\Quot^{0, t}(G)$, an irreducible variety of dimension $rt$. 

\begin{lemma} For general $W \in \Quot^{0, t}(G)$, the map $H^0 (G) \to H^0 (T)$ is surjective. In particular, $h^0 ( W ) = l-m$ and $H^0 (\Kc \otimes W^*) \cong H^0 (\Kc \otimes G^*)$. \label{CoboundarySurjective} \end{lemma}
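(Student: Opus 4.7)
The plan is to exhibit a single element of $\Quot^{0,t}(G)$ for which the map $H^0(G) \to H^0(T)$ is surjective; upper semicontinuity of $h^0(W)$ on the irreducible Quot scheme will then propagate the conclusion to a dense open subset, yielding the generic statement of the lemma.

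To construct such a quotient, I would choose $t$ distinct general points $p_1, \ldots, p_t \in C$ and partition them into groups of sizes $t_1 = \cdots = t_{r-1} = l$ and $t_r = m$, which is possible since $t = (r-1)l + m$. At each point $p$ lying in the $i$-th group, take the one-dimensional quotient of $G|_p = \bigoplus_{i'=1}^{r} M_{i'}|_p$ given by projection onto the $i$-th summand, and let $T$ be the resulting length-$t$ skyscraper quotient of $G$. Under the splitting $H^0(G) = \bigoplus_{i=1}^{r} H^0(M_i)$, the map $H^0(G) \to H^0(T)$ then decomposes as the direct sum, over $i$, of the evaluation maps $H^0(M_i) \to \bigoplus_{k=1}^{t_i} M_i|_{p_k^{(i)}}$.

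For each $i$ the line bundle $M_i$ satisfies $h^0(M_i) = l \ge t_i$, so the $i$-th evaluation is surjective provided the $t_i$ points in its group impose independent conditions on $H^0(M_i)$. This follows by the standard inductive argument: at each stage the base locus of the current twist $M_i\bigl(-\sum_{k<k_0} p_k^{(i)}\bigr)$ is a proper closed subset of $C$, so a general next point drops $h^0$ by exactly one. Once surjectivity of $H^0(G) \to H^0(T)$ is in hand, the long exact sequence of \eqref{WT} immediately yields $h^0(W) = h^0(G) - h^0(T) = rl - t = l - m$ together with an induced isomorphism $H^1(W) \xrightarrow{\sim} H^1(G)$. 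Serre duality then gives $H^0(\Kc \otimes W^*) \cong H^1(W)^* \cong H^1(G)^* \cong H^0(\Kc \otimes G^*)$.

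I do not expect a serious obstacle: the genuinely substantive step is the partition construction which arranges matters so that each summand of $G$ is asked to cover at most $l = h^0(M_i)$ points, and everything else is standard. The numerical hypothesis $m \le l-1$ is exactly what guarantees $t \le rl$ and that the partition fits; any attempt to push $t$ larger would break the argument at precisely this point.
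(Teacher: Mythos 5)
Your proof is correct, and it follows the paper's overall strategy: surjectivity of $H^0(G)\to H^0(T)$ is an open condition on the irreducible Quot scheme $\Quot^{0,t}(G)$, so it suffices to exhibit a single quotient with the property, after which the long exact sequence of \eqref{WT} and Serre duality give the two consequences exactly as you say. The difference lies in how the example is produced. The paper argues by induction on $t$: at each step it adds a length-one quotient $\K_p$ supported at a point where the current kernel still has a nonvanishing section, and it never invokes the splitting of $G$. You instead build the quotient in one shot using $G=\bigoplus_{i=1}^r M_i$, distributing the $t=(r-1)l+m$ general points into groups of size at most $l=h^0(M_i)$ and projecting onto the $i$-th summand at the points of the $i$-th group, so that everything reduces to the classical fact that general points impose independent conditions on sections of a line bundle. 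Your version is more explicit and makes the numerology visible; the paper's induction is shorter and works verbatim for any $G$ with $h^0(G)\ge t$, with no reference to the direct-sum structure. One small correction to your closing remark: the bound $m\le l-1$ is not what your construction actually needs, since the same partition argument goes through for $m=l$ (giving $h^0(W)=0$); the strict inequality is imposed in the paper because later one wants $h^0(W)=k_0=l-m\ge 1$, not because the surjectivity argument would fail at $t=rl$.
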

\begin{proof} Since the surjectivity condition is open, it is sufficient to prove the existence of one bundle $W$ with the required property. Suppose first that $t=1$ and let $T=\mathbb{K}_p$, where $p$ is a point at which some section of $G$ is nonzero. Then we can find a surjection $G\to T$ such that $H^0(G)\to H^0(T)$ is surjective. Repeating this argument, we obtain the result  by induction on $t$.
%By the definition of direct image, we may identify
%\begin{equation} H^0 (G) \ = \ H^0 ( C, G) \ \cong \ H^0 ( \PP G^*, {\mathcal O}_{\PP G^*}(1) ) . \label{CPGidentification} \end{equation}
%For simplicity, we assume that $T$ has reduced support on $C$. Then at each $x \in \Supp \, T$ there exists $\eta_x \in \PP G^*|_x$, such that
%\[ \Image ( W|_x \to G|_x ) \ = \ \eta_x^\perp \ = \ \Ker \left( G|_x \to \eta_x^* \right) . \]
%It follows that we can identify $H^0 (G) \to H^0 (T)$ with the evaluation map 
%\begin{equation} H^0 (G) \ \to \ \bigoplus_{x \in \Supp \, T} \eta_x^* \ = \ \bigoplus_{x \in \Supp \, T} \cO_{\PP G^*}(1)|_{\eta_x} . \label{DisguisedEvaluation} \end{equation}
%Since the image of $\PP G^*$ is nondegenerate in $\PP H^0 ( \PP G^*, {\mathcal O}(1) )^* = \PP^{rl - 1}$, the evaluation map at a general choice of $(r-1)l + m \le rl-1$ points is surjective. 
\end{proof}

\begin{comment} Dualising, $\PP \Image \left( H^0 (T)^* \to H^0 (G)^* \right)$ is exactly the span of $\cT$. \end{comment}

We want one more generality condition on $W$. For $1 \le i \le r$, write $\hG_i := \bigoplus_{j \ne i} M_j$, and consider the sheaf
\[ W \cap \hG_i \ = \ \Ker \left( W \to G \to M_i \right) . \]

\begin{lemma} If $W$ is sufficiently general in $\Quot^{0,t}(G)$, then $h^0 (W \cap \hG_i ) = 0$. \label{WcaphGi} \end{lemma}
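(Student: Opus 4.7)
The condition $h^0(W \cap \hG_i) = 0$ is open on $\Quot^{0,t}(G)$ by upper-semicontinuity, and $\Quot^{0,t}(G)$ is irreducible. The plan is therefore to produce a single $W$ for which $h^0(W \cap \hG_i) = 0$ for every $i$, and then conclude by openness that this holds on a dense open subset of $\Quot^{0,t}(G)$.

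For the construction, I would choose $t$ distinct points $p_1, \dots, p_t \in C$ together with linear functionals $\lambda_k \in G|_{p_k}^*$, and take $W$ to be the kernel of the surjection $G \twoheadrightarrow T := \bigoplus_k \K_{p_k}$ whose stalk at $p_k$ is $G|_{p_k} \xrightarrow{\lambda_k} \K$. For generic $\lambda_k$, the restriction $\lambda_k|_{\hG_i|_{p_k}}$ is nonzero for every $i$ (only the $r$ hyperplanes $\hG_i|_{p_k}$ of $G|_{p_k}$ are excluded), so the induced map of sheaves $\hG_i \to T$ is surjective and $W \cap \hG_i = \Ker(\hG_i \to T)$. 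The long exact sequence then gives
\[
H^0(W \cap \hG_i) \ = \ \Ker\Bigl(\, H^0(\hG_i) \xrightarrow{\,s\,\mapsto\,(\lambda_k(s(p_k)))_k\,} \K^t \,\Bigr).
\]

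The task reduces to arranging that the $t$ linear functionals $\ell_k := \lambda_k \circ \ev_{p_k}$ on $H^0(\hG_i)$ span its $(r-1)l$-dimensional dual space; this is at least possible because $t = (r-1)l + m \ge (r-1)l$. To see that a generic choice works, I would observe that any $s \in H^0(\hG_i)$ annihilated by every functional of the form $\lambda \circ \ev_p$ must vanish pointwise and hence be zero, so the whole family of such rank-one functionals already spans $H^0(\hG_i)^*$; therefore $(r-1)l$ generic samples also span. The only slightly delicate point is the simultaneity over the $r$ values of $i$, but this is handled by the irreducibility of $\Quot^{0,t}(G)$ and the density of finite intersections of open sets.
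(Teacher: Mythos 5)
Your proposal is correct and follows essentially the same strategy as the paper: use openness of the condition together with irreducibility of $\Quot^{0,t}(G)$ to reduce to exhibiting a single $W$, then build that $W$ by an elementary transformation whose evaluation conditions kill all of $H^0(\hG_i)$. The only difference is cosmetic: the paper produces its example as $W = W_1 \oplus M_i$ with $W_1$ a general elementary transformation of $\hG_i$ (reusing the argument of Lemma~\ref{CoboundarySurjective}), whereas you take $t$ general distinct points with generic functionals and verify the spanning of the induced functionals on $H^0(\hG_i)^*$ directly.
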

\begin{proof} Since the condition $h^0(W\cap\hG_i) = 0$ is open, it is sufficient to find one example of an elementary transformation \eqref{WT} for which this property holds. For this, consider elementary transformations
\[0\to W_1\to\hG_i\to T\to0,\]
where $T$ is as in \eqref{WT}. The same argument as for Lemma \ref{CoboundarySurjective} shows that, for general $W_1$, we have $h^0(W_1)=\max\{0,-m\}=0$. Now take $W=W_1\oplus M_i$.
\end{proof}

\begin{proposition}  A general elementary transformation $W \in \Quot^{0, t}(G)$ is stable and Petri trace injective, and has $\Kc \otimes W^*$ generically generated. \end{proposition}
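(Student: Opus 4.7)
The plan is to exhibit the three properties — stability, Petri trace injectivity, and generic generatedness of $\Kc \otimes W^*$ — as open conditions on the irreducible Quot scheme $\Quot^{0,t}(G)$, and then verify that each cuts out a dense open subset. Since nonempty open subsets of an irreducible variety intersect densely, a general $W$ will then satisfy all three.

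For generic generatedness of $\Kc \otimes W^*$: by Lemma \ref{CoboundarySurjective} one has $H^0(\Kc \otimes W^*) \cong H^0(\Kc \otimes G^*) = \bigoplus_{i=1}^r H^0(\Kc \otimes M_i^{-1})$, and by Lemma \ref{SectionsOfMi}(2) together with Serre duality each summand is one-dimensional. Its generator $\sigma_i$, under the identification $\Kc \otimes M_i^{-1} \cong \Oc_C(D_0 + \sum_{j \ne i} D_j)$, is the canonical section with zero divisor $D_0 + \sum_{j \ne i} D_j$. At any $p$ outside this finite set of divisors and outside $\Supp(T)$, the $r$ values $\sigma_i|_p$ lie in the $r$ distinct line summands of $(\Kc \otimes W^*)|_p = \bigoplus_i (\Kc \otimes M_i^{-1})|_p$, hence generate the fibre.

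The crux is Petri trace injectivity. Viewing $\sigma_i$ as a sheaf map $W \to \Kc$ through $W \hookrightarrow G \twoheadrightarrow M_i$ followed by multiplication by $\sigma_i$, where the last arrow factors as $M_i \hookrightarrow N \hookrightarrow \Kc$, one computes for $s = (s_1, \ldots, s_r) \in H^0(W) \subset \bigoplus_i H^0(M_i)$ that
\[ \mu_\Oc(s \otimes \sigma_j) \ = \ s_j \cdot \sigma_j \ \in \ H^0(M_j) \ \subset \ H^0(N) \ \subset \ H^0(\Kc), \]
where $H^0(M_j) \hookrightarrow H^0(N)$ is the inclusion from Lemma \ref{SectionsOfMi}(1). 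If $\sum_j s^{(j)} \otimes \sigma_j$ lies in the kernel of $\mu_\Oc$, then its image $\sum_j (s^{(j)})_j \in H^0(N)$ has $j$-th summand in $H^0(M_j)$; by Lemma \ref{SectionsOfMi}(1) this sum is direct, so $(s^{(j)})_j = 0$ for each $j$. Hence $s^{(j)} \in H^0(W) \cap H^0(\hG_j) = H^0(W \cap \hG_j)$, which vanishes by Lemma \ref{WcaphGi}, proving injectivity of $\mu_\Oc$.

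For stability, I would dualise \eqref{WT} to a short exact sequence $0 \to G^* \to W^* \to T' \to 0$, an elementary transformation of $G^* = \bigoplus_i M_i^{-1}$ whose cokernel $T'$ is torsion of length $t = (r-1)l + m$. Since $r \ge 2$, and the PTI range recorded in Remark \ref{PTIbounds} forces $r \le g$ and hence $l \ge 1$, we have $t \ge 1$; by Lemma \ref{SectionsOfMi}(3) the $M_i^{-1}$ are mutually nonisomorphic line bundles of common degree. Hence \cite[Th\'eor\`eme A.5]{Mer} applies to a general such extension and yields stability of $W^*$, equivalently of $W$. The main obstacle I expect is the Petri trace injectivity: matching $\mu_\Oc$ with the direct-sum decomposition of $H^0(N)$ so that Lemma \ref{WcaphGi} applies is the step where the specific choice of $G = \bigoplus M_i$ pays off, whereas the other two conclusions are either fibre-wise or a direct appeal to Mercat.
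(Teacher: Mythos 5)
Your proposal is correct and follows essentially the same route as the paper: stability via Mercat's Th\'eor\`eme A.5 (you merely dualise the sequence first), generic generatedness from Lemma \ref{CoboundarySurjective} together with Lemma \ref{SectionsOfMi}(2), and Petri trace injectivity by decomposing $H^0(\Kc\otimes W^*)$ into the $r$ one-dimensional pieces, using the directness of $\bigoplus_i H^0(M_i)\subseteq H^0(N)$ from Lemma \ref{SectionsOfMi}(1) (where the paper phrases this via the injectivity of $\mu_N$), and then killing the remaining components with Lemma \ref{WcaphGi}. No gaps.
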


\begin{proof} As the $M_i$ are mutually non-isomorphic by Lemma \ref{SectionsOfMi} (3), by \cite[Th\'eor\`eme A.5]{Mer} the bundle $W$ is stable for general $T$. By Lemma \ref{CoboundarySurjective}, we have $H^0 ( \Kc \otimes W^*) \cong H^0 ( \Kc \otimes G^*)$. From Lemma \ref{SectionsOfMi} (2), it then follows that $\Kc \otimes W^*$ is generically generated.

We describe $H^0 ( \Kc \otimes W^*)$ more explicitly. For $1 \le i \le r$, let $t_i'$ be a generator of $H^0 ( \Kc\otimes M_i^{-1})$, and write $t_i$ for the image of $t_i'$ in $H^0 ( \Kc \otimes W^* )$. Thus we obtain a splitting $H^0 ( \Kc \otimes W^*) = \bigoplus_{i=1}^r \K \cdot t_i$.
%\[ H^0 ( \Kc \otimes W^*) \ = \ \bigoplus_{i=1}^r H^0 ( \Kc M_i^{-1} ) \ = \ \bigoplus_{i=1}^r \K \cdot t_i . \]
Therefore, we can write any element of $H^0 (W) \otimes H^0 ( \Kc \otimes W^*)$ in the form
\begin{equation} \sum_{i=1}^r \sigma_i \otimes t_i \label{WKWDtensor} \end{equation}
where $\sigma_i = ( s_{i,1} , s_{i, 2} , \ldots , s_{i, r})$ is a section of $G$ belonging to $W$, that is, lying in the kernel of $H^0 (G) \to H^0 (T)$. The Petri trace is then given by
\[ \bar{\mu_W} \left( \sum_{i=1}^r \sigma_i \otimes t_i \right) \ = \ \sum_{i=1}^r \mu_{M_i} ( s_{i,i} \otimes t_i' ) \ \in \ H^0 ( \Kc ) . \]

To analyse this, note that, by Lemma \ref{SectionsOfMi} (2), the homomorphism $M_i\to N$ induces an isomorphism $H^0(K_C\otimes N^{-1})\to H^0(K_C\otimes M^{-1})$. It follows that there is a commutative diagram
\[ 
\xymatrix{ 
\bigoplus_{i=1}^r H^0 (M_i) \otimes H^0 ( \Kc\otimes M_i^{-1}) \ar[r]^-{\oplus \mu_{M_i}} \ar[d] & \bigoplus_{i=1}^r H^0 ( \Kc ) \ar[d]^{\mathrm{sum}} \\
 H^0 ( N ) \otimes H^0 (\Kc\otimes N^{-1}) \ar[r]^{\quad\quad\quad \mu_N} & H^0 ( \Kc ), } 
 \]
where the lefthand vertical map is an isomorphism by Lemma \ref{SectionsOfMi} (1) and (2). Since $h^0(K_C\otimes N^{-1})=1$ by Lemma \ref{SectionsOfMi} (2), $\mu_N$ is injective. By commutativity, the composed map $\bigoplus_{i=1}^r H^0 (M_i) \otimes H^0 ( \Kc\otimes M_i^{-1} ) \to H^0 ( \Kc )$ is injective.

This means that a tensor of the form (\ref{WKWDtensor}) has trace zero only if $s_{i,i} = 0$ for all $i$. Thus $\sigma_i$ belongs to the subsheaf $W \cap \hG_i$. But by Lemma \ref{WcaphGi}, for general $W\in\Quot^{0, t}(G)$, $h^0(W \cap \hG_i)=0$ and $\sigma_i=0$ for all $i$. This completes the proof. \end{proof}

\begin{corollary}\label{corgen} Let $g$, $r$, $l = \left\lfloor \frac{g}{r} \right\rfloor$ and $r_0$ be as above. For $0 \le m \le l-1$, there exists a Petri trace injective bundle $W$ of rank $r$ and degree $r(g-2)+l-m$ with $h^0 (W) = l-m$ and such that $\Kc \otimes W^*$ is generically generated. \end{corollary}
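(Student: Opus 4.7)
The plan is very short: this corollary is essentially a repackaging of the Proposition immediately preceding it, so the work is to identify the right parameters in the construction and carry out the numerical bookkeeping.

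First, given $g$, $r$ and $l=\lfloor g/r\rfloor$, I would run the construction of the section verbatim: pick an effective divisor $D_0$ of degree $r_0=g-rl$ with $h^0(\Kc(-D_0))=rl$, set $N=\Kc(-D_0)$, choose effective divisors $D_1,\ldots,D_r$ of degree $l$ satisfying \eqref{DiSpan}, and form $M_i=N(-\sum_{j\ne i}D_j)$ and $G=\bigoplus_{i=1}^r M_i$. For the given $m$ with $0\le m\le l-1$, set $t=(r-1)l+m$ and take $W$ to be a sufficiently general point of $\Quot^{0,t}(G)$.

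Second, I would invoke the preceding Proposition to conclude at once that $W$ is stable, Petri trace injective, and that $\Kc\otimes W^*$ is generically generated. This takes care of all three qualitative conclusions of the corollary with nothing further to prove.

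Third, I would just verify the two numerical claims. For $h^0(W)$, Lemma \ref{CoboundarySurjective} says $H^0(G)\to H^0(T)$ is surjective for general $W$, so $h^0(W)=h^0(G)-\deg T=rl-t=l-m$. For $\deg W$, I would compute $\deg W=\deg G-t=r(2g-2-r_0-(r-1)l)-(r-1)l-m$ and substitute $rr_0=rg-r^2l$ (coming from $g=rl+r_0$) to simplify, which collapses the expression to $rg-2r+l-m=r(g-2)+l-m$.

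The only point requiring care is to ensure that all the genericity hypotheses used along the way (surjectivity of the coboundary in Lemma \ref{CoboundarySurjective}, the vanishing $h^0(W\cap\hG_i)=0$ from Lemma \ref{WcaphGi}, and stability from \cite[Th\'eor\`eme A.5]{Mer}) can be imposed simultaneously. Since each is an open condition on the irreducible scheme $\Quot^{0,t}(G)$ and each is individually non-empty, their intersection is non-empty, so a single $W$ enjoys all the required properties. I do not expect any real obstacle; the substantive content has already been done in the Proposition, and this corollary just records the numerical output.
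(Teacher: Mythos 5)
Your proposal is correct and matches the paper's (implicit) argument: the corollary is stated without separate proof precisely because it is the Proposition applied to the construction of \S\ref{genPTI}, with $h^0(W)=l-m$ coming from Lemma \ref{CoboundarySurjective} and the degree $r(g-2)+l-m$ from the computation $\deg G - t$ exactly as you carried it out. Your closing remark that the finitely many open genericity conditions on the irreducible Quot scheme can be imposed simultaneously is the same (tacit) point the paper relies on, so there is nothing missing.
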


%As generic generatedness is an open condition on families of bundles with $r$ sections, we obtain:

\begin{corollary} Let $C$ be a general curve of genus $g$. Suppose $0 \le m \le l-1$ and $d = r(g-2) + l - m$. If $m=0$, suppose further that $g\not\equiv 0\bmod r$. Then $\left( B^{l-m}_{r, d} \right)_{\PTI}$ is irreducible and, if $W$ is a general element of $\left( B^{l-m}_{r, d} \right)_{\PTI}$ and $p$ is a general point of $C$, 
\begin{equation}\label{eqgen}h^0(W)=h^0(W(p))=l-m.
 \end{equation}\label{GenGen}\end{corollary}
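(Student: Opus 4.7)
The assertion has two parts: the irreducibility of $(B^{l-m}_{r,d})_{\PTI}$, and the equalities \eqref{eqgen} for a general $W$ and a general $p \in C$. My plan is to obtain the first directly from Theorem \ref{PTIIrrComp} after checking the numerical hypotheses, and to deduce the second from the generic generatedness of $\Kc \otimes W^*$ established in Corollary \ref{corgen}, propagated across the component by openness.

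For the irreducibility, I first compute
\[ \rho^{l-m}_{1,0,r,d} \;=\; g - (l-m)\bigl((l-m) - d + r(g-1)\bigr) \;=\; g - r(l-m) . \]
The hypothesis $1 \le \rho^{l-m}_{1,0,r,d} \le g$ of Theorem \ref{PTIIrrComp} thus becomes $0 \le r(l-m) \le g-1$. The upper bound is immediate from $m \ge 0$; writing $g = rl + r_0$ with $0 \le r_0 < r$, the lower bound $r(l-m) \le g-1$ becomes $r_0 + rm \ge 1$, which is precisely the standing hypothesis (either $m \ge 1$, or $m = 0$ and $r_0 \ge 1$, i.e.\ $g \not\equiv 0 \bmod r$). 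Theorem \ref{PTIIrrComp} then furnishes an irreducible component $(B^{l-m}_{r,d})_{\PTI}$, and Corollary \ref{corgen} exhibits a bundle $W_0$ inside it satisfying $h^0(W_0) = l-m$ with $\Kc \otimes W_0^*$ generically generated.

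For \eqref{eqgen}, both conditions ``$h^0(W) \le l-m$'' and ``$\Kc \otimes W^*$ is generically generated'' are Zariski-open on $(B^{l-m}_{r,d})_{\PTI}$: the first by semicontinuity of $h^0$, the second because the locus of pairs $(W,p)$ in a universal parameter space where the evaluation map $H^0(\Kc \otimes W^*) \to (\Kc \otimes W^*)|_p$ has rank $r$ is open, so by upper semicontinuity of fibre dimension those $W$ at which it misses full rank \emph{for every} $p \in C$ form a closed set. Since both open conditions hold at $W_0$, they hold on dense open subsets of the irreducible component; combined with the Brill--Noether inequality $h^0(W) \ge l-m$, a general $W$ satisfies $h^0(W) = l-m$ and admits a non-empty open $U_W \subset C$ on which $\Kc \otimes W^*$ is generated.

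Finally, for $p \in U_W$, apply $H^0$ to
\[ 0 \to \Kc \otimes W^*(-p) \to \Kc \otimes W^* \to (\Kc \otimes W^*)|_p \to 0 ; \]
surjectivity of the evaluation at $p$ gives $h^1(W) - h^1(W(p)) = h^0(\Kc \otimes W^*) - h^0(\Kc \otimes W^*(-p)) = r$, and Riemann--Roch applied to $W$ and $W(p)$ then forces $h^0(W(p)) = h^0(W) = l-m$, which is \eqref{eqgen}. The only step requiring real care is ensuring that the openness/density arguments take place inside the single irreducible component $(B^{l-m}_{r,d})_{\PTI}$ rather than on the possibly reducible full locus $B^{l-m}_{r,d}$; once Theorem \ref{PTIIrrComp} delivers irreducibility and Corollary \ref{corgen} provides a point where all the desired conditions hold simultaneously, the remainder of the argument is formal.
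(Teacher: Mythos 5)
Your proposal is correct and follows essentially the same route as the paper: irreducibility via Theorem \ref{PTIIrrComp} (your explicit computation $\rho^{l-m}_{1,0,r,d}=g-r(l-m)\ge 1$ is exactly the numerical point the paper notes parenthetically for $m=0$), one example from Corollary \ref{corgen}, openness of the relevant conditions on the irreducible component, and the coboundary/evaluation-map argument (Serre duality plus Riemann--Roch) at a general point $p$. The only quibble is cosmetic: the bound $r(l-m)\ge 0$ comes from $m\le l-1$ rather than $m\ge 0$, which does not affect the argument.
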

\begin{proof}
The irreducibility of $\left( B^{l-m}_{r, d} \right)_{\PTI}$ follows from Theorem \ref{PTIIrrComp}. (The hypothesis $g \not\equiv 0 \mod r$ is required to ensure that the numerical hypothesis of Theorem \ref{PTIIrrComp} holds.) The property \eqref{eqgen} is open. It is therefore sufficient to find one example of a bundle $W$ with the stated property. For this, take $W$ as in Corollary \ref{corgen}. The statement \eqref{eqgen} is then equivalent to the injectivity of the coboundary map in the sequence
\[ 0 \to H^0 ( W ) \to H^0 ( W(p) ) \to W(p)|_p \to H^1 ( W) \to H^1 (W(p)) \to 0 . \]
By Serre duality, the coboundary map is injective if and only if the evaluation map
\begin{equation} H^0(K_C \otimes W^*) \ \to \ \Kc\otimes W^*|_p \label{EvalMap} \end{equation}
is surjective. By Corollary \ref{corgen}, this is true for general $p$.
\end{proof}

\section{Smoothness of twisted Brill--Noether loci} \label{smoothness}

In this section, we prove our main result Theorem \ref{main}. The major part of the section is concerned with proving the following technical proposition.

\begin{proposition} \label{HigherRank} Let $C$ be a general curve of genus $g$. Suppose $e_0$ and $k_0$ are integers satisfying
\begin{equation} g - k_0 \left( k_0 - (d + r e_0) + r(g-1) \right) \ \ge \ 1 \label{BNnumber} \end{equation}
and furthermore that there exists a bundle $W \in \left( B^{k_0}_{r, d+re_0} \right)_{\PTI}$ such that for general $p$ in $C$ we have $h^0(W)=h^0(W(p))=k_0$. Write $e = ne_0 + e_1$ where $1 \le e_1 \le n$. Then for general $V \in U(r, d)$ and for $re + nd - rn(g-1) \le k \le nk_0$, the twisted Brill--Noether locus $\bv$ has a component which is nonempty, generically smooth and of the expected dimension. \end{proposition}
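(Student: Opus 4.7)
The plan is to exhibit a single pair $(V,E)\in U(r,d)\times U(n,e)$ with $h^0(V\otimes E)=nk_0$ at which the Petri trace map $\mu_E$ of Corollary \ref{bvSmooth} is injective, then invoke Proposition \ref{OneImpliesAll} with $k'=nk_0$ and $\chi=re+nd-rn(g-1)$. The range $\chi\le k\le nk_0$ imposed in the statement is precisely the range in which Proposition \ref{OneImpliesAll} applies, so a single witness propagates the existence of a generically smooth component of the expected dimension to every $k$ in the statement.

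The witness is built using the elementary-transformation method of \cite{Mer} (as in Theorem \ref{nonemptiness}), but with the choices tightened to secure Petri injectivity. By Theorem \ref{TiB}, a general $V\in U(r,d)$ is Petri general, so every twist $V\otimes L$ is Petri trace injective; combined with \eqref{BNnumber} and Theorem \ref{LTiB}(3), the rank-one locus $B^{k_0}_{1,e_0}(V)$ has positive dimension with general element $L$ satisfying $h^0(V\otimes L)=k_0$. The hypothesis on $W$, together with the irreducibility of $(B^{k_0}_{r,d+re_0})_{\PTI}$ from Theorem \ref{PTIIrrComp}, lets us further arrange that $\Kc\otimes L^{-1}\otimes V^*$ is generically generated for such $L$. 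We pick $n$ mutually non-isomorphic line bundles $L_1,\dots,L_n\in\Pic^{e_0}(C)$ with these properties and form a general elementary transformation
\[0\to\bigoplus_{i=1}^n L_i\to E\to\tau\to 0,\qquad \deg\tau=e_1.\]
By \cite[Th\'eor\`eme A.5]{Mer}, $E$ is stable; for sufficiently general $\tau$ the connecting map $H^0(V\otimes\tau)\to H^1(\bigoplus_i V\otimes L_i)$ is injective, giving $h^0(V\otimes E)=nk_0$, a canonical identification $H^0(V\otimes E)=\bigoplus_i H^0(V\otimes L_i)$, and dually an inclusion $H^0(\Kc\otimes E^*\otimes V^*)\hookrightarrow\bigoplus_j H^0(\Kc\otimes L_j^{-1}\otimes V^*)$ cut out by evaluation conditions at $\Supp(\tau)$ determined by the extension class.

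The technical crux is the injectivity of $\mu_E$. Away from $\Supp(\tau)$ we have $E\cong\bigoplus_i L_i$, so locally the trace factors into blocks $(i,j)$ valued in $\Kc\otimes L_i\otimes L_j^{-1}$; the diagonal blocks are precisely the Petri trace maps for $V\otimes L_i$ and are injective by the PTI property. The off-diagonal blocks are the delicate point: the generic generation of each $\Kc\otimes L_j^{-1}\otimes V^*$ (this is where the stronger hypothesis $h^0(W)=h^0(W(p))=k_0$ is indispensable) allows us to evaluate at general points, and together with Petri generality of $V$ this forces the cross contributions to vanish when paired against the evaluation-constrained subspace $H^0(\Kc\otimes E^*\otimes V^*)$. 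Once $\mu_E$ is known to be injective, Corollary \ref{bvSmooth} yields smoothness of $B^{nk_0}_{n,e}(V)$ of the expected dimension at $E$, and Proposition \ref{OneImpliesAll} propagates this to every $k$ in $[\chi,nk_0]$. The hard part is the off-diagonal step: decoupling the cross terms from the extension-class constraints at $\Supp(\tau)$ is what the generic-generation hypothesis is engineered to make possible.
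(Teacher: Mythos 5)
Your overall strategy is the same as the paper's (one witness pair $(V,E)$ with $h^0(V\otimes E)=nk_0$ and $\mu_E$ injective, Mercat's elementary transformation for stability, Proposition \ref{OneImpliesAll} to propagate over $\chi\le k\le nk_0$, and reduction of $\mu_E$ to the block decomposition of the trace map for $F=\bigoplus_i L_i$), but there is a genuine gap at the decisive step. Writing the blocks as $\mu_{i,j}\colon \Lambda_i\otimes H^0(\Kc\otimes L_j^{-1}\otimes V^*)\to H^0(\Kc\otimes L_j^{-1}\otimes L_i)$, Petri generality of $V$ (Theorem \ref{TiB}) gives injectivity only of the diagonal blocks $i=j$. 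For $i\ne j$ you offer no argument: generic generation of $\Kc\otimes L_j^{-1}\otimes V^*$ does not imply injectivity of a multiplication map, and the phrase ``allows us to evaluate at general points \dots forces the cross contributions to vanish'' is not a proof. This is exactly what the paper's Lemma \ref{lemcs} is for: it twists the target by a general point $p$ so that the relevant direct image sheaf over $U\times U\subset X\times X$ (with $X=G^{k_0}(V,\cP^{e_0})$ irreducible) becomes locally free of rank $g$, observes that injectivity holds at diagonal points because $\mu_0$, hence $\mu_0'$ into $H^0(\Kc(p))$, is injective, and then uses openness of injectivity plus irreducibility to conclude it at general off-diagonal pairs $((L_i,\Lambda_i),(L_j,\Lambda_j))$; since that map factors through $\mu_{i,j}$, the latter is injective. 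Without this specialisation-to-the-diagonal idea (or a substitute), your proof stalls precisely at the point you yourself label ``the hard part''.

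A secondary, but related, problem is that you misplace where the hypothesis on $W$ enters. In the paper it plays no role in the off-diagonal injectivity; it is what guarantees $h^0(V\otimes E)=nk_0$, which you assert without justification (``for sufficiently general $\tau$ the connecting map is injective''). The correct route is: $h^0(W(p))=k_0$ forces $\rho^{k_0}_{1,e_0}(V)<g$, so Theorem \ref{PTIIrrComp} applies and $\bigl(B^{k_0}_{r,d+re_0}\bigr)_{\PTI}$ is irreducible; hence the general twist $V\otimes L$, $L\in B^{k_0}_{1,e_0}(V)$, inherits from $W$ the property $h^0(V\otimes L(p))=k_0$ for general $p$, and the explicit transformation $E_0=\bigl(\bigoplus_{i\le e_1}L_i(p_i)\bigr)\oplus\bigl(\bigoplus_{i>e_1}L_i\bigr)$ realises $h^0(V\otimes E_0)=nk_0$; openness then gives the count for the general $E$. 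This count is also what identifies your $\Lambda=\bigoplus_i H^0(V\otimes L_i)$ with all of $H^0(V\otimes E)$, without which the injectivity you aim for would only be of a restricted trace map and Corollary \ref{bvSmooth} could not be invoked directly.
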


We begin with a lemma which has applications to coherent systems (twisted or untwisted) as well as to twisted Brill-Noether loci. For this, let $V$ be any bundle of rank $r$ and degree $d$ and consider the Grassmannian bundle $G^{k_0}(V, \cP^{e_0})$, where $\cP$ is a Poincar\'e family on $\Pic^{e_0} (C) \times C$, which parametrises pairs $(L,\Lambda_0)$ with $L$ a line bundle of degree $e_0$ and $\Lambda_0\subset H^0(V\otimes L)$ a linear subspace of dimension $k_0$. Suppose further that $X$ is an irreducible component of $G^{k_0}(V, \cP^{e_0})$ which is generically smooth of dimension
\[\rho^{k_0}_{1,e_0}(V)=g - k_0 \left( k_0 - (d + r e_0) + r(g-1) \right) \ \ge \ 1.\]
Let $(L_1,\Lambda_1) , \ldots , (L_n,\Lambda_n)$ be points of $X$, and write $F := \bigoplus_{i=1}^n L_i$ and $\Lambda:=\bigoplus_{i=1}^n \Lambda_i$. We consider elementary transformations
\begin{equation}\label{elem} 0 \to F \to E \to \tau \to 0 \end{equation}
with $\tau$ a torsion sheaf of length $e_1$. 

\begin{lemma}\label{lemcs} Under the above conditions, let $(L_1,\Lambda_1) , \ldots , (L_n,\Lambda_n)$ be general points of $X$. Then the restricted Petri $E$-trace map
\begin{equation}\label{PetriEtracemap}
\mu_E:\Lambda\otimes H^0(K_C\otimes E^*\otimes V^*)\to H^0(K_C\otimes \End E)
\end{equation}
is injective.
\end{lemma}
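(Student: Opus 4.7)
The plan is to reduce injectivity of $\mu_E|_\Lambda$ to injectivity of the analogous map $\mu_F|_\Lambda$ for the split bundle $F = \bigoplus_i L_i$, and then to exploit the direct sum decomposition of $\End F$ via an upper-semicontinuity argument comparing off-diagonal blocks to diagonal ones.

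For the reduction, I would first note that the elementary transformation \eqref{elem} dualises to an inclusion $E^* \hookrightarrow F^*$ with torsion cokernel, inducing $H^0 ( \Kc \otimes V^* \otimes E^* ) \hookrightarrow H^0 ( \Kc \otimes V^* \otimes F^* )$. Since $E$ and $F$ are torsion-free, both natural sheaf maps $\Hom(E, F) \hookrightarrow \End E$ (post-composition with $F \hookrightarrow E$) and $\Hom(E, F) \hookrightarrow \End F$ (pre-composition) are injective. A direct unwinding of the definitions shows that on $\Lambda \otimes H^0 ( \Kc \otimes V^* \otimes E^* )$ both $\mu_E|_\Lambda$ and the restriction of $\mu_F|_\Lambda$ factor through $H^0 ( \Kc \otimes \Hom(E, F))$ via the same ``tensor then $\tr_V$'' intermediate map, so they have the same kernel. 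Hence it suffices to prove that $\mu_F|_\Lambda$ is injective on the full space $\Lambda \otimes H^0 ( \Kc \otimes V^* \otimes F^* )$.

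Writing $\End F = \bigoplus_{i,j} L_j \otimes L_i^{-1}$, the map $\mu_F|_\Lambda$ is block-diagonal with $(i,j)$-block
\[
 \phi_{ij} \colon \Lambda_j \otimes H^0 ( \Kc \otimes V^* \otimes L_i^{-1} ) \to H^0 ( \Kc \otimes L_j \otimes L_i^{-1} )
\]
given by multiplication followed by $\tr_V$. Each block lands in a distinct summand of the target, so injectivity of $\mu_F|_\Lambda$ is equivalent to injectivity of every $\phi_{ij}$. For $i = j$, the block $\phi_{ii}$ is precisely the restricted Petri trace map $\mu_{L_i}|_{\Lambda_i}$, which is injective at general $(L_i, \Lambda_i) \in X$ by Proposition \ref{Gk}(2) together with the hypothesis that $X$ is generically smooth of the expected dimension.

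For the off-diagonal blocks with $i \ne j$, the plan is a semicontinuity argument. I would regard $\phi_{ij}$ as a family of linear maps over an open subset of $X \times X$, parametrised by $((L_i, \Lambda_i), (L_j, \Lambda_j))$, on which the source is locally free of rank $k_0 \cdot h^1(V \otimes L_i)$. The hypothesis $\rho^{k_0}_{1, e_0, r, d} \ge 1$ gives $k_0 \cdot h^1(V \otimes L_i) \le g - 1 = \dim H^0 ( \Kc \otimes L_j \otimes L_i^{-1})$ whenever $L_j \ne L_i$. Specialising $(L_j, \Lambda_j) \to (L_i, \Lambda_i)$ along the diagonal $\Delta \subset X \times X$, the map $\phi_{ij}$ degenerates to $\mu_{L_i}|_{\Lambda_i}$, which is injective by the previous step. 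Since the rank of a family of linear maps on a constant-rank source is lower-semicontinuous, injectivity propagates to a Zariski-open, hence dense, neighbourhood of $\Delta$ in $X \times X$. Intersecting these conditions over the finitely many pairs $\{i, j\}$ then yields a dense open subset of $X^n$ on which all $\phi_{ij}$ are injective. The hard part will be making this semicontinuity argument rigorous despite the jump of the target dimension from $g-1$ off the diagonal to $g$ along it, which means the target does not form a vector bundle across the specialisation; this is circumvented by phrasing the argument in terms of the lower-semicontinuous rank on a constant-rank source rather than in terms of cokernels.
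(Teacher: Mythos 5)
Your route is essentially the paper's own --- reduce to the split bundle $F$ through $H^0(\Kc \otimes E^* \otimes F)$, decompose the restricted $\mu_F$ into blocks, obtain the diagonal blocks from smoothness of $X$, and get the off-diagonal blocks by specialising to the diagonal of $X \times X$ --- and every step except the last is carried out correctly. The genuine gap is exactly at the point you flag as ``the hard part''. To run a semicontinuity argument you must realise the off-diagonal targets $H^0(\Kc \otimes L_j \otimes L_i^{-1})$ (in your indexing) as fibres of a coherent sheaf over an open subset of $X \times X$; since $h^0$ jumps from $g-1$ to $g$ on the diagonal, the relevant direct image is not locally free there and cohomological base change fails, so the fibre of the sheaf-theoretic family at a diagonal point is not $H^0(\Kc)$ and the statement that ``$\phi_{ij}$ degenerates to $\mu_{L_i}|_{\Lambda_i}$'' is not available at the level of fibres. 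Moreover your proposed remedy --- lower semicontinuity of the fibrewise rank for a map with locally free source --- is false once the target is merely coherent: on $\mathbb{A}^1$ take $\phi \colon \cO \to \cO \oplus \K_0$, $1 \mapsto (0, \bar{1})$, where $\K_0$ is the skyscraper sheaf at the origin; the fibre map at $0$ is injective, while the fibre maps at all nearby points vanish. So injectivity at the diagonal, even if established fibrewise, does not by itself propagate off the diagonal.

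The paper closes this gap with a small but essential device: twist by a general point $p \in C$. One regards $\Lambda_0$ as a subspace of $H^0(V \otimes L(p))$ and replaces the target by $H^0(\Kc \otimes N^{-1} \otimes L(p))$, which has dimension $g$ for every pair $((L,\Lambda_0),(N,\Lambda_0'))$, on or off the diagonal; hence the targets globalise to a vector bundle $B$ of rank $g$ over $U \times U$, and the trace maps globalise to a morphism $\tilde{\mu} \colon A \to B$ of locally free sheaves (here $U$ is the open set where $X$ is smooth and $h^0(V \otimes L)$ is minimal, which makes the source $A$ locally free). On the diagonal $\tilde{\mu}$ restricts to $\mu_0'$, the composition of $\mu_0$ with the injection $H^0(\Kc) \hookrightarrow H^0(\Kc(p))$, so it is injective there; ordinary semicontinuity for maps of vector bundles then gives injectivity on a dense open subset of $U \times U$, and at an off-diagonal point the twisted map factors through $H^0(\Kc \otimes L_j^{-1} \otimes L_i) \hookrightarrow H^0(\Kc \otimes L_j^{-1} \otimes L_i(p))$, so each $\mu_{i,j}$ is injective. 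With this modification your argument becomes the paper's proof; the reduction from $\mu_E$ to $\mu_F$ and the treatment of the diagonal blocks can stand as you wrote them.
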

\begin{proof} Consider first the restricted Petri $F$-trace map of $V$, given by
\begin{equation} \mu_F \colon \Lambda \otimes H^0 (\Kc \otimes F^* \otimes V^*) 
\to H^0 ( \Kc \otimes \End F) . \label{PetriFTraceMap} \end{equation}
Noting that $H^0 ( \Kc \otimes \End F) = \bigoplus_{i,j} H^0 ( \Kc\otimes L_j^{-1}\otimes L_i )$, we see that (\ref{PetriFTraceMap}) is the direct sum of the trace maps
\begin{equation} \mu_{i,j} \colon \Lambda_i \otimes H^0 ( \Kc \otimes L_j^{-1} \otimes V^*) \ \to \ H^0 ( \Kc\otimes L_j^{-1}\otimes  L_i) \label{ijTrace} \end{equation}
for $1 \le i \le n$ and $1 \le j \le n$. Thus $\mu_F$ is injective if and only if $\mu_{i,j}$ is injective for all $i, j$.

Write
\[ U \ := \ \{ (L,\Lambda_0) \in X: X \text{ \rm smooth at } (L,\Lambda_0), h^0 (V\otimes L) \text{ \rm takes its minimum value}\} , \]
which is a non-empty open subset of  $X$ by semicontinuity. We can assume that $(L_i,\Lambda_i)\in U$ for all $i$.

Now let $p$ be a point of $C$. For each $(L,\Lambda_0) \in U$, we have a commutative diagram
\[ \xymatrix{ \Lambda_0 \otimes H^0 ( \Kc\otimes L^{-1} \otimes V^*) \ar[r]^-{\mu_0} \ar@{=}[d] & H^0 ( \Kc ) \ar[d]^\wr \\ \Lambda_0 \otimes H^0 ( \Kc\otimes L^{-1} \otimes V^*) \ar[r]^-{\mu'_0} & H^0 ( \Kc (p) ) } \]
where, in the second line, $\Lambda_0$ is regarded as a subspace of $H^0(V\otimes L(p))$ and the horizontal arrows are trace maps. Since $X$ is smooth at $(L,\Lambda_0)$, $\mu_0$ is injective. Hence so is $\mu'_0$.

Next, let $A$ be the direct image sheaf over $U \times U$ whose fibre at $((L,\Lambda_0), (N,\Lambda_0'))$ is
\[ \Lambda_0 \otimes H^0 ( \Kc \otimes N^{-1} \otimes V^*) . \]
Since $H^0(K_C\otimes N^{-1}\otimes V^*)$ is constant on $U$, this is locally free.
 Furthermore, let $B$ be the direct image sheaf over $U$ whose fibre at $((L,\Lambda_0), (N,\Lambda_0'))$ is $H^0 ( \Kc\otimes N^{-1}\otimes L(p))$. This is locally free of rank $g$.

Write $\tilde{\mu} \colon A \to B$ for the globalised Petri trace map whose restriction to $((L,\Lambda_0), (N,\Lambda_0'))$ is the trace map
\[\Lambda_0\otimes H^0(K_C\otimes N^{-1}\otimes V^*)\to H^0(K_C\otimes N^{-1}\otimes L(p)).\]
As $\tilde{\mu}|_{((L,\Lambda_0), (L,\Lambda_0))}$ coincides with $\mu'_0$ above, $\tilde{\mu}$ is injective on a non-empty open subset $U'$ of $U \times U$. We can suppose that $((L_i,\Lambda_i), (L_j,\Lambda_j))\in U'$ for all $i,j$, so that $\tilde{\mu}|_{((L_i,\Lambda_i), (L_j,\Lambda_j))}$ is injective for all $i, j$. 
As $\tilde{\mu}|_{((L_i,\Lambda_i), (L_j,\Lambda_j))}$ factors through the trace map (\ref{ijTrace}), the latter is also injective. This completes the proof that $\mu_F$ is injective.

To see that $\mu_E$ is injective, we note that $K_C\otimes E^*\otimes V^*\subset K_C\otimes F^*\otimes V^*$ and consider the diagram of cohomology spaces
\[ \xymatrix{\Lambda\otimes H^0 ( \Kc \otimes E^* \otimes V^* ) \ar@{=}[r]\ar@{^{(}->}[d]_a \ar[dr]_c & \Lambda\otimes H^0 ( \Kc \otimes E^* \otimes V^* )\ar[dr]^{\mu_E} & \\
\Lambda\otimes H^0( \Kc \otimes F^* \otimes V^* ) \ar@{^{(}->}[d]_{\mu_F}  \ar[dr]^b & H^0 ( \Kc \otimes E^* \otimes F) \ar@{^{(}->}[r]^d \ar@{^{(}->}[d]_f & H^0 (\Kc \otimes \End E) \\ \Lambda\otimes H^0(K_C\otimes \End F)\ar[r]^-{\sim}& \bigoplus_{i, j} H^0 (\Kc\otimes L_j^{-1}\otimes L_i ). & } \]
We have already seen that $\mu_F$ is injective. Hence first $b$, then $c$, then $\mu_E=d\circ c$ are all injective.
\end{proof}

\begin{proof}[Proof of Proposition \ref{HigherRank}]
Suppose that the hypotheses of Proposition \ref{HigherRank} hold and let $V \in U(r, d)$ be general. By Proposition \ref{OneImpliesAll}, it suffices to exhibit a stable bundle $E \in U(n, e)$ with $h^0 (V \otimes E) = nk_0$ and such that $V$ is Petri $E$-trace injective. For this, we use the construction of \eqref{elem}, where we now assume that this elementary transformation is general and the bundles $L_i$ are all distinct. It then follows from  \cite[Th\'eor\`eme A.5]{Mer} that $E$ is stable.

Note next that, since $h^0(W(p))=k_0$, we must have $k_0>d+re_0-r(g-1)$, so $\rho_{1,e_0}^{k_0}(V)<g$. By Theorem \ref{LTiB}, it follows that the locus $B^{k_0}_{1, e_0}(V)$ is non-empty and irreducible of dimension $\rho^{k_0}_{1, e_0}(V) \ge 1$ (by \eqref{BNnumber}), and
\begin{equation} h^0 (V \otimes L) = k_0 \hbox{ for general } L \in B^{k_0}_{1,e_0}(V) . \label{hzeroconstant} \end{equation}
By Theorem \ref{TiB}, we may assume also that $V$ is Petri general. By Proposition \ref{PTIIrrComp}, $V\otimes L$ is a general point of $\left( B^{k_0}_{r, d+re_0} \right)_{\PTI}$. It now follows from the hypotheses of Proposition \ref{HigherRank} that 
\begin{equation}\label{eqLp}h^0(V\otimes L(p))=k_0.
\end{equation}

Now let $L_1 , \ldots , L_n$ be general points of $B^{k_0}_{1, e_0}(V)$, and write $F := \bigoplus_{i=1}^n L_i$. We can assume that $h^0(V\otimes L_i)=k_0$ for all $i$, so that $h^0(V\otimes F)=nk_0$. Now note that the condition $h^0(V\otimes E)=nk_0$ is an open condition, so it is sufficient to exhibit a single elementary transformation \eqref{elem} satisfying this condition. In fact, by \eqref{eqLp}, for general $p_i\in C$, we can take
\[ E_0 \ := \ \left( \bigoplus_{i=1}^{e_1} L_i(p_i) \right) \oplus \left( \bigoplus_{i=e_1 + 1}^n L_i \right). \]

Finally, we note that $B^{k_0}_{1,e_0}(V)$ is irreducible of the expected dimension by Theorem \ref{LTiB}, and moreover $B^{k_0+1}_{1,e_0}(V)$ is of the expected dimension. It follows that $G^{k_0}(V,\mathcal{P}^{e_0})$ is also irreducible of the expected dimension. Now apply Lemma \ref{lemcs} with $X=G^{k_0}(V,\mathcal{P}^{e_0})$  and  $\Lambda_i=H^0(V\otimes L_i)$ for all $i$. It follows that $V$ is Petri $E$-trace injective.
\end{proof}

\noindent Now we can prove our main result on smoothness and dimension of twisted Brill--Noether loci.

\begin{proof}[Proof of Theorem \ref{main}] A straightforward computation shows that the numerical hypotheses and Corollary \ref{GenGen} imply that the hypotheses of Proposition \ref{HigherRank} are satisfied.  
\begin{comment} Details:
Need:
\[ g - k_0 \left( k_0 - (d + r e_0) + r(g-1) \right) \ \ge \ 1 \]
Substitute for $d+re_0$:
\[ g - k_0 \left( k_0 - r(g-2) - k_0 + r(g-1) \right) \ \ge \ 1 \]
\[ g \ \ge \ k_0 r + 1 \]
As before, write $g = rl + r_0$. Then $k_0 = l$ and the required inequality becomes
\[ g \ \ge \ l r + 1 \]
which is satisfied since $g \not\equiv 0 \mod r$.
\end{comment}
 \end{proof}

\begin{remark} By \cite[Theorem 2.10]{CTiB}, if $V$ is general in $U(r, d)$ then for any $E \in \bv$ the bundle $V \otimes E$ is stable. \end{remark}

\begin{remark} \begin{enumerate}
\item[(1)] According to \cite[Theorem 0.3]{RTiB}, if $0\le\rho^1_{n,e}(V)\le n^2(g-1)+1$, then every component of $B^1_{n,e}$ has dimension $\rho^1_{n,e}(V)$. The authors do not require the more stringent numerical conditions of our Theorem \ref{main}. Our result can be seen as a partial generalisation of \cite[Theorem 0.3]{RTiB}, although we do not show that every component of $\bv$ has the expected dimension.
\item[(2)] It seems reasonable to conjecture that the hypotheses of Proposition \ref{HigherRank} are satisfied in more cases than those covered in Theorem \ref{main}. The main obstacle to generalising the theorem is to show the generic generatedness of a general bundle in $\left( B^{k_0}_{r, d+re_0} \right)_\PTI$ in more cases. (Theoretical bounds can be deduced from (\ref{NecessaryForPTIgengen}).) \end{enumerate} \end{remark}
%\begin{equation} (r-1)(g-1) + 1 \ \le \ d \ \le \ rg + \frac{g-1}{r} . \label{NecessaryForPTIgengen} \end{equation}

\begin{remark} For general $C$, the bundle $\Oc$ is Petri general and Petri trace injective. The above proofs are therefore valid and we recover \cite[Theorem 1.1]{CMTiB} (see also \cite{TiB91}).
\end{remark}

\section{Tangent cones of twisted Brill--Noether loci} \label{TangentCones}

Suppose $Y \subset X$ are varieties, and let $x \in Y$ be a smooth point of $X$. Recall that the tangent cone $\bT_x Y$ to $Y$ at $x$, set-theoretically, is
\[ \{ v \ \in \ T_x X : v \hbox{ is tangent to a smooth arc in } Y \} .\]
Generalising theorems of Kempf \cite{Kem} and Laszlo \cite{Lasz}, in \cite{CTiB} the theory of determinantal varieties is used to describe the tangent cones to $B^k_{r, d}$ at points where the appropriate Petri maps are injective. In \cite[Remark 2.8]{CTiB}, it is noted that the same approach can be used to describe $\bT_E \bv$, which is a subvariety of $T_E U(n, e) = H^1 ( \End E )$. In the following proposition, we follow up this remark and use Theorem \ref{main} to give some situations in which it applies.

\begin{proposition} \label{TangentCone} Let $V\in U(r, d)$ and suppose that $E \in B^k_{n, e} (V)$ with $k\ge\chi:=\chi(V\otimes E)$ and $\mu_E$ injective.
\begin{enumerate}
\renewcommand{\labelenumi}{(\arabic{enumi})}
%\item The product $V \otimes E$ is a stable vector bundle.
\item The tangent cone $\bT_E \bv$ is Cohen-Macaulay, reduced and normal.
\item If $s_1 , \ldots , s_{h^0 (V \otimes E)}$ and $t_1 , \ldots , t_{h^1 ( V \otimes E )}$ are bases for $H^0 ( V \otimes E )$ and $H^0 ( \Kc \otimes E^* \otimes \nolinebreak V^* )$ respectively, then the ideal of $\bT_E \bv$ is generated by the minors of size $( h^0 ( V \otimes E ) - k + 1 ) \times ( h^0 ( V \otimes E) - k + 1 )$ of the matrix whose $(i, j)$th entry is
\[ \begin{pmatrix} \mu_E ( s_i \otimes t_j ) \end{pmatrix} , \] %_{\begin{array}{l} \footnotesize{ 1 \le i \le h^0 ( V \otimes E )} \\ \footnotesize{ 1 \le j \le h^1 ( V \otimes E) } \end{array}} . \]
an element of $H^0 ( \Kc \otimes \End E ) = H^1 ( \End E )^*$. %{\color{blue} Note that since $\chi ( V \otimes E ) = rd + ne - rn (g-1) < k$, we have $h^1 ( V \otimes E ) > h^0 ( V \otimes E ) - k$, so the minors are proper.}
\item The degree of $\bT_E \bv$ is
\[ \prod_{h = 0}^{k-1} \frac{(h^1 ( V \otimes E ) + h) ! \cdot h !}{(h^0 ( V \otimes E ) - k + h)! \cdot ( k + h - \chi ( V \otimes E ))!} .\]
\item As a set, $\bT_E \bv$ is the union
\[ \bigcup_{\Lambda \in \Gr ( k, H^0 (V \otimes E))} \mu_E ( \Lambda \otimes H^0 ( \Kc \otimes E^* \otimes V^*))^\perp \ \subseteq \ H^1 ( \End E ) . \]
\end{enumerate}
\end{proposition}

%\begin{proof} Part (1) follows from \cite[Theorem 2.10]{CTiB}, since $V$ is general stable and $E$ is stable. The %remaining parts are proven in the same way as \cite[VI, Theorem 2.1]{ACGH}, replacing $G^r_d(C)$ with $G^k ( V, %\cE )$ and using Proposition \ref{GkVE} in place of \cite[IV, Proposition 4.1]{ACGH} where required. \end{proof}

\begin{proof} If $k = \chi$, then $\bT_E \bv = H^1 ( \End E )$, and the various parts of the proposition follow easily. (In particular, the formula in (4) yields the required degree $1$.) Thus, in what follows, we will assume $k > \chi$.

%Part (1) follows from \cite[Theorem 2.10]{CTiB}, since $V$ is general stable and $E$ is stable. 
 As before, let $\cE \to \tUne \times C$ be a Poincar\'e bundle, where $\tUne \to U(n, e)$ is a suitable \'etale cover. Fix a point in $\tUne$ lying over $E$, and, abusing notation, denote it again by $E$. Recall the map $\pi \colon G^k ( V, \cE ) =: \gkv \to \tUne$. By hypothesis, $\mu_E|_{\Lambda\otimes H^0(K_C\otimes E^*\otimes V^*)}$ is injective for all $\Lambda \in \Gr (k, H^0 ( V \otimes E ))$. By Proposition \ref{GkVE} (2), therefore, $\gkv$ is smooth and of dimension $\rho^k_{n, e} (V)$ in a neighbourhood of $\pi^{-1} (E)$, and $\pi^{-1} (E)$ is a smooth scheme. Moreover, by Proposition \ref{OneImpliesAll} the component of $B^k_{n, e} (V)$ containing $E$ also has dimension $\rho^k_{n, e}(V)$. As the fibres of $\pi$ are connected, being Grassmannians, $\pi$ is birational in a neighbourhood of $\pi^{-1} (E)$.

Thus the hypotheses of \cite[II, Lemma 1.1 and Corollary, p.\ 66]{ACGH} are met, with $X$ being a suitable neighbourhood of $\pi^{-1} (E)$. Hence $\bT_E \bv = d\pi ( N )$, where $N := N_{\pi^{-1} (E) / \gkv}$ is the normal bundle of the fibre $\pi^{-1} (E)$ in $\gkv$.

Next, by Proposition \ref{GkVE} (1) we have an exact sequence
\[ 0 \ \to \ T_\Lambda \Gr ( k, H^0 ( V \otimes E )) \ \to \ T_{(E, \Lambda)} \gkv \ \xrightarrow{d\pi} \ T_E U (n, e) \]
and %by Proposition \ref{GkVE} (2) we have
\[ \Image \left( d\pi|_{(\Lambda, E)} \right) \ = \ \mu_E \left( \Lambda \otimes H^0 ( \Kc \otimes E^* \otimes V^* ) \right)^\perp \ \subseteq \ H^1 ( \End E ) . \]
Thus the total space of the normal bundle $N$ can be identified with
\[ \{ ( \Lambda, v ) \in \Gr (k, H^0 ( V \otimes E )) \times H^1 ( \End E ) : v \cup \mu_E \left( \Lambda \otimes H^0 ( \Kc \otimes E^* \otimes V^* ) \right) = 0 \} . \]
Moreover, via this identification, $d\pi$ is projection to the second factor.

This completes the proof of (4) and shows that the hypotheses of \cite[Lemma p.\ 242]{ACGH} apply to $N$ (with $I = N$, $w = k$, $A = H^0 ( V \otimes E )$ and $\phi = \mu_E$). %$E = H^0 ( \Kc \otimes E^* \otimes V^* )$
 Therefore, statements (1--3) are respectively (i--iii) of \cite[Lemma p.\ 242]{ACGH}. \end{proof}

\begin{remark} If the hypotheses of Theorem \ref{main} apply, the conclusions of Proposition \ref{TangentCone} hold for general $V$ and general $E\in\bv_0$.  In particular, we can describe some tangent cones of generalised theta divisors at well behaved singular points. Suppose $g \ge r^2$ and $1 \le d \le r - 1$. Then $\frac{g}{r} \ge r$, so we may set $k_0 = d$. We write $r' := \frac{r}{\gcd(r, d)}$ and $d' := \frac{d}{\gcd(r, d)}$, and set $e_0 = g-2$. Then for any positive integer $\lambda$, the values
\[ n \ = \ \lambda r' \quad \quad \hbox{and} \quad \quad e \ = \ \lambda r' e_0 + \lambda ( r' - d') \ = \ \lambda \cdot ( r' (g-2) + r' - d' ) \]
satisfy both the hypotheses of Theorem \ref{main} and the equation $re + nd = rn(g - 1)$. (Here $e_1:=e-ne_0 = \lambda \cdot (r' - d')$. Note also that necessarily $n \ge 2$.) 
\begin{comment}

Details: As $g \ge r^2$, we have $l = \left\lfloor \frac{g}{r} \right\rfloor \ge r$. As $k_0 = d \le r - 1$, we have $m \ge 1$ and so the condition $g \not\equiv 0 \mod r$ is not necessary.\\
\\
Write $h := \gcd (r, d)$. As $0 < r - d < r$, we have
\[ h \le r-d \le r - h \]
so $1 \le r' - d' \le r' - 1$, and $1 \le \lambda \le \lambda ( r' - d' ) \ \le \ \lambda r'$. Thus $1 \le e_1 \le n$.\\
\\
Next, $d + r e_0 \ = \ k_0 + r (g-2)$.\\
\\
Lastly,
\begin{multline*} re + nd - rn(g-1) \ = \ r ( \lambda r' (g-2) + \lambda (r' - d') ) + \lambda r' d - r \lambda r' (g-1) \ = \\
 - r\lambda r' + r\lambda r' - r\lambda d' + \lambda r'd \ = \ - h r' \lambda d' + r'\lambda h d' \ = \ 0 . \end{multline*} 
\end{comment}
 By Theorem \ref{main}, for $1 \le k \le nk_0$ there exists a component $X_k$ of $B^k_{n, e} (V) \subset B^1_{n, e} (V)$ upon which the Petri maps $\mu_E$ are injective for general $E \in X_k$. By Proposition \ref{TangentCone} (3), for each such $E$ we have $\mult_E B^1_{n, e} (V) = h^0 ( V \otimes E )$. \end{remark}

\subsection*{Geometry of the tangent cones}

We end this section with an observation generalising \cite[Theorem 5.2]{CTiB} (see also \cite[p.\ 232]{ACGH}). Firstly, we recall from \cite[{\S} 3]{HR} that, generalising the canonical curve in $| \Kc |^*$, for any vector bundle $E$ there is a map
\begin{equation} \PP \End E \ \dashrightarrow \  |\cO_{\PP (T_C \otimes \End E)}(1)|^* \ \cong \ \PP H^0 ( \Kc \otimes \End E )^* \ = \ \PP H^1 ( \End E ) . \label{psi} \end{equation}
We write $\Delta$ for the closed sublocus $\PP E^* \times_C \PP E$ of rank one maps in $\PP \End E$. Suppose $\phi_1 , \ldots , \phi_p$ are points of $\Delta$ supported over distinct points $x_1 , \ldots , x_p$ of $C$. For $1 \le i \le p$, let $\tphi_i$ be the point corresponding to $\phi_i$ via the identification
\[ \End E|_{x_i} \ \isom \ (\End E) (x_i) |_{x_i} \ \isom \ H^0 \left( (\End E) (x_i) |_{x_i} \right) \] %\subseteq \ H^0 \left( \Oc(D) \otimes \End E|_D \right) . \]
which is canonical up to nonzero scalar. We observe that for any vector bundle $W$, the element $\tphi_i$ defines a map
\begin{equation} W \otimes E|_{x_i} \ \to \ W \otimes E(x_i)|_{x_i} . \label{tphii} \end{equation}
This will be used later.

Now write $D = x_1 + \cdots + x_p$. Unwinding the definition  of the map (\ref{psi}), we see that the secant $\overline{\phi_1 , \ldots , \phi_p}$ in $\PP H^1 ( \End E )$ is the span of the images of the $\tphi_i$ by the coboundary map in the sequence
\[ 0 \ \to \ H^0 ( \End E ) \to H^0 ( (\End E) (D) ) \ \to \ H^0 ( (\End E) (D)|_D ) \ \xrightarrow{\partial} \ H^1 ( \End E ) . \]
The following is valid without any injectivity assumption on $\mu_E$.

\begin{proposition} Let $m$ be the rank of the subbundle $V_\mathrm{ggen}$ of $V$ generated by the evaluation map $E^* \otimes H^0 ( V \otimes E ) \to V$. Suppose $h^0 ( V \otimes E ) \ge pm + k$. Then $\Sec^p \Delta$ is contained in the projectivised tangent cone $\PP \bT_E B^k_{n, e} (V)$. \end{proposition}

\begin{proof} (Compare with \cite[Theorem 5.2]{CTiB}.) A tangent vector $v \in H^1 ( \End E )$ belongs to $\bT_E B^k_{n, e} (V)$ if and only if
\[ \Ker \left( \cdot \cup v \colon H^0 ( V \otimes E ) \to H^1 ( V \otimes E ) \right) \]
has dimension at least $k$. Now clearly it suffices to show that a general point of $\Sec^p \Delta$ belongs to the tangent cone. So let $v = \partial ( \lambda_1 \tphi_1 , \ldots , \lambda_p \tphi_p )$ where the $\tphi_i$ and $D$ are as above, and the $\lambda_i$ are nonzero scalars. The cup product map by $v$ factorises
\[ H^0 ( V \otimes E ) \ \xrightarrow{\ev_D} \ \bigoplus_{i=1}^p V \otimes E|_{x_i} \ \xrightarrow{( \lambda_1 \tphi_1 , \ldots , \lambda_p \tphi_p )} \ \bigoplus_{i=1}^p V \otimes E (x_i)|_{x_i} \xrightarrow{\partial'} \ H^1 ( V \otimes E ) , \]
where $\ev_D$ is the evaluation map, $\tphi_i$ is as in (\ref{tphii}) and $\partial'$ is the coboundary map of the cohomology sequence of
\[ 0 \ \to \ V \otimes E \ \to \ V \otimes E (D) \ \to \ V \otimes E(D)|_D \ \to \ 0 .\]
Now $\Image ( \ev_D )$ is contained in $\bigoplus_{i=1}^p V_\mathrm{ggen} \otimes E|_{x_i}$. In view of (\ref{tphii}), moreover
\[ \Image \left( ( \lambda_1 \tphi_1 , \ldots , \lambda_p \tphi_p ) \circ \ev_D \right) \ \subseteq \ \bigoplus_{i=1}^p V_\mathrm{ggen} \otimes \Image ( \tphi_i ) . \]
Since by hypothesis $\dim \Image(\tphi_i) = 1$, the last space has dimension $mp$. It follows that
\[ \dim \Ker ( \cdot \cup v ) \ \ge \ h^0 ( V \otimes E ) - mp \ \ge \ k . \]
The proposition follows. \end{proof}

\section{A non-empty twisted Brill--Noether locus with negative expected dimension} \label{negative}

It is well known that higher-rank Brill--Noether loci $B^k_{n,e}$ can exhibit more complicated behaviour than their rank one counterparts. Here we give an example of a nonempty twisted Brill--Noether locus with negative Brill--Noether number, where the curve $C$ and the bundle $V$ are general. Firstly, we recall some facts about \emph{maximal line subbundles} of vector bundles (see \cite{Oxb} for more general and detailed information):

 Suppose $r | (g-1)$, and set $e_0 :=(r-1) \frac{g-1}{r}$. Let $V$ be a general bundle of rank $r$ and degree zero. A computation shows that $\rho^1_{1, e_0}(V) = 0$. 
%\[ \rho^1_{1, e_0}(V) \ = \ g - (1 - r \cdot (r-1)\frac{(g-1)}{r} + r(g-1)) \ = \ g - 1 + (r-1)(g-1) - r(g-1) \ = \ 0$.
 As $V$ is general, by Theorem \ref{LTiB} (3) the locus $B^1_{1, e_0}(V)$ is of dimension zero. Furthermore, for $e < e_0$ or $k_0 > 1$ we check that $\rho^{k_0}_{1, e_0}(V) < 0$. 
%check
 Hence, for all $L \in B^1_{1, e_0}(V)$, we have $h^0 (V \otimes L) = 1$ and $L^{-1}$ is a line subbundle of maximal degree in $V$. By \cite[Proposition 1.4 and Lemma 2.2]{Oxb}, $B^1_{1,e_0}(V)$ consists of $r^g$ points (of multiplicity $1$).
%Details: Here Oxbury's formula specialises to
%\[ [ W^0 (V) \ = \ \frac{0!}{(g-0+0+0)!} (r \Theta)^(0+1) \cdot (g - 0 + 0 + 0) \ = \ \frac{1}{g!} r^g \Theta^g \ = \ r^g . \]

\begin{proposition} Let $r, g$ and $e_0$ be as above and let $V$ be a general bundle of rank $r\ge2$ and degree $0$. Let $n$ be an integer satisfying $r < n \le r^g$. Then the twisted Brill--Noether locus $B^n_{n, ne_0 + 1}(V)$ has negative expected dimension $n(r-n) + 1$ but contains a component of dimension at least $1$. \end{proposition}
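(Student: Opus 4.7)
The plan is to verify the expected-dimension formula by direct substitution and then exhibit a one-parameter family of stable bundles in $B^n_{n,ne_0+1}(V)$ constructed by elementary transformations from the maximal line subbundles of $V$.

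First, I would compute the Brill--Noether number. Substituting $k=n$, $d=0$, $e = ne_0+1$ into $\rho^k_{n,e}(V)$ and using $re_0 = (r-1)(g-1)$, the coefficients of $g-1$ cancel and one is left with $\rho^n_{n,ne_0+1}(V) = n(r-n)+1$, which is negative whenever $n > r$. For the construction, I would pick $n$ pairwise distinct line bundles $L_1,\ldots,L_n$ in $B^1_{1,e_0}(V)$, using the preceding analysis that $|B^1_{1,e_0}(V)| = r^g \ge n$. Set $F := \bigoplus_{i=1}^n L_i$; since each $L_i$ contributes exactly one section, $h^0(V\otimes F) = n$ and $\deg F = ne_0$. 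For each $p \in C$ and each class $[\xi] \in \mathbb{P}(F|_p) \cong \mathbb{P}^{n-1}$, the associated elementary transformation
\[ 0 \to F \to E_{p,[\xi]} \to \mathbb{K}_p \to 0 \]
produces a bundle of rank $n$ and degree $ne_0+1$. Since the $L_i$ are mutually non-isomorphic, \cite[Th\'eor\`eme A.5]{Mer} ensures that for general $(p,[\xi])$ the bundle $E_{p,[\xi]}$ is stable. Pushing the cohomology sequence gives $h^0(V\otimes E_{p,[\xi]}) \ge h^0(V\otimes F) = n$, so $E_{p,[\xi]}$ lies in $B^n_{n,ne_0+1}(V)$.

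The next step is a dimension count for the image of the moduli map
\[ \phi \colon \tilde X := \mathbb{P}(F) \dashrightarrow B^n_{n,ne_0+1}(V),\qquad (p,[\xi]) \mapsto [E_{p,[\xi]}], \]
where $\tilde X$ is the $\mathbb{P}^{n-1}$-bundle over $C$, so $\dim \tilde X = n$. The map $\phi$ is equivariant for the action of $\mathrm{Aut}(F) \cong \mathbb{G}_m^n$ on $\tilde X$: any automorphism of $F$ extends uniquely to an isomorphism of the extension. The diagonal $\mathbb{G}_m$ acts trivially on each fibre $\mathbb{P}(F|_p)$, so the effective action is by $\mathbb{G}_m^{n-1}$, whose standard toric action on $\mathbb{P}^{n-1}$ has a dense $(n-1)$-dimensional orbit. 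Thus a generic $\mathrm{Aut}(F)$-orbit in $\tilde X$ has dimension $n-1$.

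It remains to show that the generic fibre of $\phi$ does not exceed the $\mathrm{Aut}(F)$-orbit through a point, which will yield $\dim \phi(\tilde X) \ge n - (n-1) = 1$ and hence a component of $B^n_{n,ne_0+1}(V)$ of dimension at least $1$. If $E_{p,[\xi]} \cong E_{p',[\xi']}$, then the image of $F \subset E_{p,[\xi]}$ under the isomorphism is a second rank-$n$ subsheaf $F' \subset E_{p',[\xi']}$ isomorphic to $\bigoplus L_i$ and of the same degree $ne_0$; when $F' = F$ this isomorphism restricts to an element of $\mathrm{Aut}(F)$ and forces $p=p'$. To conclude, I would argue that for generic $E$ in the image there are only finitely many such alternative embeddings $\bigoplus L_i \hookrightarrow E$: each amounts to choosing a line-subbundle copy of $L_i$ in $E$ for each $i$, and the maximality of $L_i^{-1}$ as line subbundle of $V$ together with the finiteness statement $|B^1_{1,e_0}(V)| = r^g$ (together with semicontinuity of $h^0(L_i^{-1}\otimes E)$) bounds the number of such embeddings. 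This fibre analysis is the main technical obstacle; once it is in place, the dimension estimate follows, and $\phi(\tilde X)$ is contained in a component of $B^n_{n,ne_0+1}(V)$ of dimension at least~$1$, in spite of the negative expected dimension $n(r-n)+1$.
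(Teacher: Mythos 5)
Your construction and numerical computation are exactly those of the paper: choose $n$ pairwise non-isomorphic $L_i\in B^1_{1,e_0}(V)$, form a general elementary transformation of $F=\bigoplus_{i=1}^n L_i$ at a point $p$, invoke \cite[Th\'eor\`eme A.5]{Mer} for stability and the cohomology sequence for $h^0(V\otimes E)\ge n$, and check $\rho^n_{n,ne_0+1}(V)=n(r-n)+1<0$. The difference is the final dimension estimate, and that is where there is a genuine gap: your route requires the generic fibre of $\phi\colon\PP(F)\dashrightarrow B^n_{n,ne_0+1}(V)$ to have dimension at most $n-1$, and you explicitly leave this step (``the main technical obstacle'') unproved. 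The sketch you give does not close it: finiteness of embeddings $\bigoplus L_i\hookrightarrow E$ has nothing to do with the maximality of $L_i^{-1}$ as a line subbundle of $V$, nor with the count $\#B^1_{1,e_0}(V)=r^g$ (those control maps into $V$, not into $E$). What you would actually need is, for instance, that $h^0(L_i^{-1}\otimes E)=1$ for general $E$ in the family (the exact sequence a priori only gives $h^0\le 2$), together with an argument that the fibre is then a finite union of $\mathrm{Aut}(F)$-orbits; none of this is carried out, so as written the conclusion $\dim\phi(\PP(F))\ge 1$ is not established.

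The gap is real but easily filled, and much more cheaply than by counting embeddings: $\det E_{p,[\xi]}\cong (L_1\otimes\cdots\otimes L_n)(p)$, and since $g\ge 2$ the map $p\mapsto\cO(p)$ is injective, so isomorphic members of your family must have the same $p$. Hence every fibre of $\phi$ lies in a single fibre $\PP(F|_p)\cong\PP^{n-1}$ of $\PP(F)\to C$, which gives the bound at once; indeed all you really need is that the stable bundles $E_p$ obtained for distinct general $p$ are pairwise non-isomorphic, so the locus cannot be finite. The paper's own count is the Quot-scheme version of the same idea: $\Quot^{0,1}(F)$ has dimension $n$, and modulo $\mathrm{Aut}(F)\cong\mathbb{G}_m^{n}$ (whose effective action on each $\PP^{n-1}$ has a dense $(n-1)$-dimensional orbit) there is precisely one stable $E$ for each $p$, whence $\dim B^n_{n,ne_0+1}(V)\ge 1$. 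So your approach is essentially the paper's, but the decisive step of the dimension count is missing and should be replaced by (or completed with) the determinant observation.
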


\begin{proof} By the previous paragraph we can choose mutually nonisomorphic $L_1, \ldots , L_n \in B^1_{1, e_0} (V)$. Let 
\[ 0 \to \bigoplus_{i=1}^n L_i \to E \to \K_p \to 0 \]
be a general elementary transformation, where $\K_p$ is the skyscraper sheaf of degree $1$ supported at $p \in C$. Then $E$ is stable by \cite[Th\'eor\`eme A.5]{Mer}, and $h^0 (V \otimes E) \ge n$. The Quot scheme parametrising the elementary transformations $E$ has dimension $n$; after acting by $\operatorname{Aut}\left(\oplus_{i=1}^nL_i\right)$, we see that there is precisely one stable $E$ for any given $p$. Thus $\dim B^n_{ne_0 + 1} (V) \ge 1$. On the other hand, we compute easily that
\[ \rho^n_{n, ne_0 + 1}(V) \ = \ n(r-n) + 1 . \]
\begin{comment} Let us compute the expected dimension of $B^n_{n,ne_0 + 1}(V)$. We have
\begin{align*} \rho^n_{n, ne_0 + 1}(V) \ &= \ n^2(g-1) + 1 - n(n - rne_0 - r + rn(g-1)) \\
 \ &= \ n^2 g - n^2 + 1 - n^2 ( 1 - re_0 + r(g-1) ) + nr \\
 \ &= \ n^2 \cdot \hbox{\Large (} g - (1 - re_0 + r(g-1) ) \hbox{\Large )} + nr - n^2 + 1 \\
 \ &= \ n^2 \cdot \rho^1_{1,e_0}(V) + n(r-n) + 1 \\
 \ &= \ n(r-n) + 1 . \end{align*}
\end{comment}
Since by hypothesis $n > r \ge 2$, this is negative. The result follows. \end{proof}

In exactly the same way, one can prove

\begin{proposition} Let $C$ be a curve and $V$ a bundle of rank $r \ge 2$ and degree $d$ over $C$. Suppose $k_0 \ge 1$ and $e_0$ are integers satisfying $\rho^{k_0}_{1, e_0}(V) = 0$. Assume that
\[ \# \left( B^{k_0}_{1, e_0}(V) \right) \ > \ k_0 r . \]
Then for $rk_0 < n \le \# \left( B^{k_0}_{1, e_0}(V) \right)$, the twisted Brill--Noether locus $B^{nk_0}_{n, ne_0 + 1}(V)$ is non-empty and has negative expected dimension. \label{NegativeExpDim} \end{proposition}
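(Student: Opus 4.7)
The argument will parallel that of the preceding proposition verbatim, the only substantive change being that each of the chosen line bundles now contributes $k_0$ sections rather than a single one. First I would use the hypothesis $\#\bigl(B^{k_0}_{1,e_0}(V)\bigr) \ge n$ to select mutually non-isomorphic $L_1, \ldots , L_n \in B^{k_0}_{1,e_0}(V)$, each satisfying $h^0(V \otimes L_i) \ge k_0$. Setting $F := \bigoplus_{i=1}^n L_i$ gives $h^0(V \otimes F) \ge n k_0$. For a point $p \in C$ I would then form a general elementary transformation
\[ 0 \to F \to E \to \K_p \to 0. \]
Since the $L_i$ are pairwise distinct, \cite[Th\'eor\`eme A.5]{Mer} ensures that $E$ is stable of rank $n$ and degree $ne_0 + 1$, while tensoring with $V$ and taking cohomology yields $h^0(V \otimes E) \ge h^0(V \otimes F) \ge nk_0$. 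Hence $[E] \in B^{nk_0}_{n,\,ne_0+1}(V)$.

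To see that this locus is positive-dimensional, I would note (exactly as before) that the Quot scheme parametrising such elementary transformations is a $\PP^{n-1}$-bundle over $C$, of dimension $n$. Modding out by the generically free action of $\operatorname{Aut}(F)/\K^* \cong (\K^*)^{n-1}$ (free precisely because the $L_i$ are mutually non-isomorphic), the image in $U(n, ne_0+1)$ has dimension at least $1$, essentially parametrised by the point $p \in C$.

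Finally, the expected-dimension calculation: the hypothesis $\rho^{k_0}_{1,e_0}(V) = 0$ amounts to the relation $g = k_0\bigl(k_0 - d - re_0 + r(g-1)\bigr)$. Substituting this into the definition of $\rho^{nk_0}_{n,\,ne_0+1}(V)$ and simplifying gives
\[ \rho^{nk_0}_{n,\,ne_0+1}(V) \ = \ 1 - n(n - rk_0), \]
which specialises to $1 + n(r-n)$ when $k_0 = 1$ (matching the previous proposition). Under the hypothesis $n > rk_0$ we have $n - rk_0 \ge 1$ and $n \ge rk_0 + 1 \ge 3$, whence the number above is $\le -2$ and in particular negative. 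The only potentially delicate step — stability of $E$ — is dispatched by Mercat's elementary-transformation result as in the earlier case; everything else reduces to bookkeeping.
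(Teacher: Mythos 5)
Your proposal is correct and takes essentially the same route as the paper, which proves this proposition simply by noting it follows ``in exactly the same way'' as the preceding one: choose $n$ mutually nonisomorphic $L_i \in B^{k_0}_{1,e_0}(V)$, take a general elementary transformation of $\bigoplus L_i$ by $\K_p$, invoke \cite[Th\'eor\`eme A.5]{Mer} for stability, and compute $\rho^{nk_0}_{n,\,ne_0+1}(V) = 1 - n(n - rk_0) < 0$. Your computation and bookkeeping (including the optional positive-dimensionality remark, which mirrors the earlier proposition) check out.
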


%\George{I suggest omitting this proof.}

%\begin{proof} By hypothesis, we can choose mutually nonisomorphic $L_1, \ldots , L_n \in B^{k_0}_{1, e_0}(V)$ where $n > rk_0$. (Note that this implies $n \ge 2$ since $r \ge 2$.) Let $0 \to \oplus_{i=1}n L_i \to E \to \K_p \to 0$ be a general elementary transformation, where $\K_p$ is the skyscraper sheaf of degree $1$ supported at $p \in C$. Then as above we have $h^0 (V \otimes E) \ge nk_0$. On the other hand, a computation shows that
%\begin{align*} \rho^{nk_0}_{n, ne_0 + 1}(V) \ &= \ n^2 (g-1) + 1 - nk_0 ( nk_0 - (nd + nre_0 + r) + nr(g-1)) \\
 %&= \ n^2 (g-1) + 1 - nk_0 ( nk_0 - nd - nre_0 - r + nr(g-1)) \\
 %&= \ n^2 (g-1) + 1 - nk_0 ( nk_0 - nd - nre_0 + nr(g-1)) + nk_0 r \\
 %&= \ n^2 g - n^2 + 1 - n^2 k_0 (k_0 - d - re_0 + r(g-1)) + nk_0 r \\
 %&= \ n^2 ( g - k_0 (k_0 - d - re_0 + r(g-1)) ) - n^2 + 1 + nk_0 r \\
 %&= \ n^2 \cdot \rho^{k_0}_{1, e_0}(V) + n(k_0 r - n) + 1 \\
 %&= n(k_0 r - n) + 1. \end{align*}
%Our hypotheses imply that this is negative. 
%Since $n \ge 2$ and $(k_0 r - n) < 0$, we have $n(k_0 r - n) + 1 \le 2 \cdot (-1) + 1 = -1$.
%\end{proof}

\noindent This example shows that even for general stable $V$, the twisted Brill--Noether loci can exhibit pathologies.

\end{document}